\documentclass{amsart}
\usepackage{amsxtra, microtype, latexsym,wasysym}

\usepackage{etex}
\usepackage[T1]{fontenc}
\usepackage[english]{babel}
\usepackage{mathtools}
\usepackage[margin=1.45in]{geometry}
\usepackage[all]{xy}
\usepackage{textcomp}
\usepackage{enumerate}
\usepackage{enumitem}
\usepackage{microtype}
\usepackage{lmodern}
\usepackage{graphicx}
\usepackage[numbers]{natbib}

\usepackage[text]{esdiff}
\usepackage{scalerel}

\usepackage{amsfonts}
\usepackage{amsmath}
\usepackage{amstext}
\usepackage{amssymb}
\usepackage{amsthm}
\usepackage{stmaryrd, mathrsfs}
\usepackage{cancel}

\usepackage{tikz}
\usepackage{tikz-cd}
\usepackage{units}

\usepackage{hyperref}
\usepackage{xcolor}
\usepackage{scalerel}
\usepackage{stackengine}
\usepackage{stackrel}


\newcommand*{\DMbra}[2]{\llbracket#1,
#2\rrbracket}
\newcommand*{\bigDMbra}[2]{\bigl\llbracket#1,
#2\bigr\rrbracket}
\newcommand{\pair}[2]{\langle#1,
#2\rangle}
\newcommand{\bigpair}[2]{\bigl\langle#1,
#2\bigr\rangle}
\newcommand{\Bigpair}[2]{\Bigl\langle#1,
#2\Bigr\rangle}

\stackMath

\newcommand\corelinear[1]{%
\savestack{\tmpbox}{\stretchto{%
  \scaleto{%
    \scalerel*[\widthof{\ensuremath{#1}}]{\kern-.6pt\sim\kern-.6pt}%
    {\rule[-\textheight/2]{1ex}{\textheight}}
  }{\textheight}%
}{0.5ex}}%
\stackon[1pt]{#1}{\tmpbox}%
}
\parskip 1ex

\DeclareMathOperator {\pr}    {pr}
\DeclareMathOperator {\grap} {graph}
\DeclareMathOperator {\Hom}   {Hom}
\DeclareMathOperator {\id}    {id}
\DeclareMathOperator {\End}    {End}

\newcommand{\dr}{\mathbf{d}}
\newcommand{\llb}{\llbracket} 
\newcommand{\rrb}{\rrbracket}
\newcommand{\eps}   {\varepsilon}
\newcommand{\half}   {\frac{1}{2}}
\newcommand{\TT}       {\mathbb{T}}
\newcommand{\Jac}       {\mathbf{Jac}}
\newcommand{\R}       {\mathbb{R}}
\newcommand{\mx}{\mathfrak{X}} 
\newcommand{\C}{\mathbb{C}}
\newcommand{\AAA}{\mathbb{A}}
\newcommand{\dd}{\mathbf{d}}
\newcommand{\bas}{{\scriptstyle\mathrm{bas}}}
\newcommand{\JJ}{\mathcal{J}}
\newcommand{\LL}{\mathcal{L}}
\newcommand{\E}{\mathbb{E}}
\newcommand{\XX}{\mathfrak{X}}
\newcommand{\slift}{\sigma^\Delta}

\newcommand{\an}{\arrowvert_}

\newcommand{\ldr}[1]{{{\pounds}}_{#1}}

\makeatletter
\@namedef{subjclassname@2020}{\textup{2020} Mathematics Subject Classification}
\makeatother

\newtheorem{thm}{Theorem}[section]
\newtheorem{mydef}[thm]{Definition}
\newtheorem{lemma}[thm]{Lemma}
\newtheorem{cor}[thm]{Corollary}
\newtheorem{prop}[thm]{Proposition}
\newtheorem{rmk}[thm]{Remark}
\newtheorem{example}[thm]{Example}

\begin{document}
\title{Linear generalised complex structures}


\author{M. Heuer} \address{Department of Mathematics, University of Hamburg, Germany} \email{malte.heuer@uni-hamburg.de}
\author{M. Jotz Lean} \address{Mathematisches Institut, Georg-August Universit\"at G\"ottingen, Germany}
\email{madeleine.jotz-lean@mathematik.uni-goettingen.de}
\subjclass[2020]{Primary: 32L05, 
  53D18, 
  Secondary: 58A50, 
  53B05, 
  53D17. 
}

\begin{abstract}
  This paper studies linear generalised complex structures over vector
  bundles, as a generalised geometry version of holomorphic vector
  bundles. In an adapted linear splitting, a linear generalised
  complex structure on a vector bundle $E\to M$ is equivalent to a
  $\mathbb C$-multiplication $j$ in the fibers of $TM\oplus E^*$ and
  a $\C$-Lie algebroid structure on $TM\oplus E^*$. 

  Generalised complex Lie algebroids (or Glanon algebroids) are then
  studied in this context, and expressed as a pair of complex
  conjugated Lie bialgebroids.

\end{abstract}

\maketitle

\tableofcontents
\section{Introduction}

\addtocontents{toc}{\protect\setcounter{tocdepth}{1}}

This paper studies linear generalised complex structures on vector
bundles and on Lie algebroids.  Generalised complex geometry was
introduced by Hitchin in \cite{Hitchin03} as a unification of
symplectic and complex geometry.  It was further developed by
Gualtieri in his thesis \cite{Gualtieri04,Gualtieri11}. Since they
simultaneously unify symplectic and complex structures, generalised
complex structures have been studied for their relation to T-duality
-- a concept arising in string theory -- by Cavalcanti and Gualtieri
in \cite{CaGu10}.
Gualtieri also defined generalised K{\"a}hler structures in
\cite{Gualtieri04,Gualtieri14}. These have been studied for example in
\cite{Hitchin06} and in \cite{BuCaGu08}.

The relation between generalised complex geometry and Lie algebroids
and Lie groupoids was first studied by Crainic in \cite{Crainic11},
and generalised complex structures \emph{on} Lie groupoids and Lie
algebroids were studied in \cite{JoStXu16}. In particular,
\cite{JoStXu16} proves that multiplicative generalised complex
structures on source simply connected Lie groupoids are equivalent to
generalised complex integrable Lie algebroids.  This paper studies in
more detail the obtained generalised complex Lie algebroids (or
\emph{Glanon algebroids}), and in particular the underlying special
case of generalised complex vector bundles.

\medskip
      
The notion of generalised holomorphic bundles has been introduced by
Gualtieri \cite{Gualtieri11,Gualtieri10} as a complex vector bundle
$E$ over a generalised complex manifold $M$, equipped with a flat
$L$-connection $\bar\partial\colon \Gamma(E)\to \Gamma(L^*\otimes E)$,
where $L$ is the $+i$-eigenbundle of the generalised complex structure
on $M$. This is in analogy to how a holomorphic vector bundle
structure on $E$ over a complex manifold $M$ is equivalent to a flat
Dolbeault operator $\bar\partial_E$.  However, while a holomorphic
vector bundle $E\to M$ amounts to a linear complex structure on the
smooth manifold $E$, in Gualtieri's definition of a generalised
holomorphic vector bundle the manifold $E$ itself does not carry a
generalised complex structure.

This paper instead adopts the point of view that a ``generalised
holomorphic vector bundle'' should be a smooth vector bundle equipped
with a \emph{linear} generalised complex structure, and explores the
property of such an object -- here, it is now the manifold $M$ which
does not automatically inherit a generalised complex structure. Note
that symplectic vector bundles, i.e.~smooth vector bundles equipped
with a linear symplectic form, and Poisson holomorphic vector bundles
are natural examples of this notion of generalised holomorphic vector
bundle.

The following sections describe holomorphic vector bundles as linear
complex structures on vector bundles, and explain how the obtained
infinitesimal structures in this setting can be recovered in the more
general context of linear generalised complex structures.

\subsection*{Holomorphic vector bundles and linear complex structures}
If $q\colon E\to M$ is a holomorphic vector bundle over a complex
manifold $M$, then $TE\to E$ and $TM\to M$ are also holomorphic vector
bundles.
The complex structure $J_E\colon TE\to TE$ is then a vector bundle 
morphism over the complex structure $J_M\colon TM\to TM$. It is
easy to see (see Section \ref{new_complex_subsection}) that
this defines a morphism of double vector bundles
\begin{equation}\label{diag_JE}{\footnotesize
		\begin{tikzcd}
		TE \ar[rr,"J_E"]\ar[rd]\ar[dd] & & TE \ar[dd]\ar[rd] & \\
				& E  \ar[rr,"\id_E", near start, crossing over] & & E \ar[dd] \\
		TM \ar[rr,"J_M", near start]\ar[rd] & & TM\ar[rd] & \\
				& M\ar[rr,"\id_M"] \ar[uu, crossing over, leftarrow] &   & M
		\end{tikzcd} 
	\,}
	\end{equation}
		with core morphism $j_E\colon E\to E$,
        the multiplication by $i$ in the fibers of $E$.
        
        A linear complex structure $J_E$ as in \eqref{diag_JE} on a smooth
        vector bundle $E$ is in fact equivalent to a holomorphic structure 
        on $E$. This follows from the corresponding, more general result about 
        holomorphic groupoids of \cite{LaStXu09,BuDr19}.
        For the convenience of the reader, and since this approach to 
       holomorphic vector bundles motivates this paper's study of
        generalised complex vector bundles, a more direct proof of this
       correspondence is carried out in detail in Section
       \ref{new_complex_section}.
     
        \subsection*{Linear generalised complex
              structures}

        This paper studies the generalisation of this description of
        holomorphic vector bundles to vector bundles endowed with a
        \emph{linear generalised complex structure}. Since
        the terminology of \emph{generalised holomorphic vector
          bundle} is already used in the literature for a different
        generalisation of holomorphic vector bundles, here vector bundles
        endowed with a linear generalised complex structure are simply
        called \emph{generalised complex vector bundles}.

        Let $E\to M$ be a smooth vector bundle.  The generalised
        tangent bundle $\TT E=TE\oplus T^*E$ is then a double vector
        bundle
\begin{equation*}
	\begin{tikzcd}
	TE\oplus T^*E \ar[r] \ar[d] & E \ar[d]\\
	TM\oplus E^* \ar[r]  & M
	\end{tikzcd}
\end{equation*}
with core $E\oplus T^*M$. The vector bundle $TE\oplus T^*E\to E$ is
naturally equipped with the standard Courant algebroid structure over
the manifold $E$.

\begin{mydef}\label{def_lin_gcs}
  A generalised complex structure $\JJ$ on a vector bundle $E\to M$ is
  called \textbf{linear} if
  $\mathcal{J}\colon TE\oplus T^*E\rightarrow TE\oplus T^*E$ is a
  morphism of double vector bundles over a side morphism
  $j\colon TM\oplus E^*\rightarrow TM\oplus E^*$ and with a core
  morphism $j_C\colon E\oplus T^*M \rightarrow E\oplus T^*M$.
	\begin{equation}\label{diag_J}{\footnotesize
		\begin{tikzcd}
                  TE\oplus T^*E \ar[rr,"\JJ"]\ar[rd]\ar[dd] & & TE\oplus T^*E \ar[dd]\ar[rd] & \\
                  & E  \ar[rr,"\id_E", near start, crossing over] & & E \ar[dd] \\
                  TM\oplus E^* \ar[rr,"j", near start]\ar[rd] & & TM\oplus E^*\ar[rd] & \\
                  & M\ar[rr,"\id_M"] \ar[uu, crossing over, leftarrow]
                  & & M
		\end{tikzcd} 
	\,.}
      \end{equation}
      \end{mydef}
      Consider first a linear generalised complex structure on a
      vector space $V$ (i.e.~on a vector bundle over a point). In this
      case the tangent and cotangent bundle are canonically split,
      $TV\simeq V\times V$ and $T^*V\simeq V\times V^*$. The linearity
      condition on the generalised complex structure $\JJ$ is
      equivalent to $\JJ$ being determined by the maps in the fibres,
      that is by the side morphism $j_{V^*}\colon V^*\to V^*$ and the
      core morphism $j_C\colon V\to V$. These have to be in negative
      duality to each other, that is $j=-j_C^t$, since the generalised
      complex structure is orthogonal with respect to the canonical
      pairing. Therefore, a linear generalised complex structure on a
      vector space in this sense is equivalent to the choice of an
      ordinary complex structure in the vector space.

      Back to the general case, this paper shows that after the choice
      of an adequate linear splitting of $TE\oplus T^*E$, the
      generalised complex structure is equivalent to a special complex
      Lie algebroid structure on $TM\oplus E^*$, as in the following
      definition -- the bundle $TM\oplus E^*$ is seen as a complex
      vector bundle with $j\colon TM\oplus E^* \to TM\oplus E^*$ as
      its multiplication by $i$.

      \begin{mydef}\label{def_qrca}
        Let $Q\to M$ be a vector bundle with complex
        fibers, hence with a vector bundle morphism $j\colon Q\to Q$
        such that $j^2=-\id_Q$. A \textbf{complex Lie algebroid structure}
        on $(Q,j)$ is
         a $\C$-bilinear Lie algebra bracket $[\cdot\,,\cdot]$ on sections of $Q$ and a morphism
         $\lambda\colon Q\to T_\C M$ of complex vector bundles that anchors the bracket:
              $ [q_1,fq_2]=\lambda(q_1)(f)q_2+f[q_1,q_2]$
              for all $f\in C^\infty(M,\C)$ and $q_1,q_2\in\Gamma(Q)$.

          A complex Lie algebroid
            $(Q\to M, j,\lambda, [\cdot\,,\cdot])$ is
            \textbf{quasi-real} if there exists
            \begin{enumerate}
            \item
              a real vector bundle morphism $\rho\colon Q\to TM$ such that
              $\lambda=\rho_j\colon Q\to T_\C M$ defined by
               \begin{equation}\label{special_complex_anchor}
                 \rho_j (q):=\half(\rho(q)\otimes 1-\rho(jq)\otimes i)
                 \end{equation}
                 for all $q\in\Gamma(Q)$, and
              \item  a dull bracket
            $\llb\cdot\,,\cdot\rrb$ on $(Q,\rho)$ such that the
            complexification
            $(Q_\C\to M,\rho_\C,\llb\cdot\,,\cdot\rrb_\C)$ restricts to
            $(Q^{1,0}\to M, \rho_{1,0},\llb\cdot\,,\cdot\rrb_{1,0})$ on
            the $i$-eigenspace of $j_\C$, which coincides with the
            complex Lie algebroid
            $(Q\to M,\rho,j, [\cdot\,,\cdot])$
            via the canonical isomorphism
            \[Q\to Q^{1,0},\qquad 
                q\mapsto \half(q\otimes 1- j(q)\otimes i).
              \]
              \end{enumerate}
            \end{mydef}

            The definition above of a complex algebroid follows the
            one in \cite{Weinstein07}. 
            The following theorem is
            the main result of this paper.
      \begin{thm}\label{main}
        Let $E\to M$ be a smooth vector bundle.  A linear generalised
        complex structure on $E$ with side
        $j\colon TM\oplus E^*\to TM\oplus E^*$ is equivalent to a
        quasi-real complex Lie algebroid structure on
        $(TM\oplus E^*,j)$ with anchor $\pr_{TM, j}\colon TM\oplus E^*\to T_\C M$,
        $\nu\mapsto \half(\pr_{TM}\nu\otimes 1-\pr_{TM}(j\nu)\otimes i)$.
        \end{thm}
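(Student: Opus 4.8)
The plan is to pass from the generalised complex structure $\JJ$ to its $+i$-eigenbundle $L\subseteq(\TT E)_\C$, which is a complex Dirac structure in the complexified standard Courant algebroid over $E$. The linearity of $\JJ$ in the sense of Definition \ref{def_lin_gcs} is equivalent to $L$ being a double subbundle, i.e.~a \emph{linear} (or VB-) Dirac structure, whose side is the $+i$-eigenbundle of $j_\C$; this side is canonically identified with the complex vector bundle $(TM\oplus E^*,j)$ via the isomorphism $q\mapsto\half(q\otimes 1-j(q)\otimes i)$ of Definition \ref{def_qrca}. Since $\JJ$ is orthogonal and squares to $-\id$, its side $j$ satisfies $j^2=-\id$ and its core is the negative dual $j_C=-j^t$, exactly as in the pointwise discussion preceding Definition \ref{def_qrca}. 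This step produces the complex multiplication $j$ that appears in the statement.

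Next I would fix an adequate linear splitting of $\TT E$ — equivalently, a dull bracket together with a Dorfman connection — in order to trivialise the double-vector-bundle structure. In such a splitting the linear and core sections of $\TT E\to E$ are parametrised by sections of $TM\oplus E^*$ and of the core $E\oplus T^*M$, and the standard Courant--Dorfman bracket, restricted to linear sections, descends to a bracket $[\cdot\,,\cdot]$ on $\Gamma(TM\oplus E^*)$. The involutivity of $L$ under the Courant bracket then translates, through this splitting, into the complex Lie algebroid axioms on $(TM\oplus E^*,j)$: $\C$-bilinearity and the Jacobi identity for $[\cdot\,,\cdot]$, together with the Leibniz rule for the anchor. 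The anchor of the Dirac structure $L$ is the restriction of the projection to $T_\C E$; carried through the splitting and the identification $L\cong(TM\oplus E^*)^{1,0}$, it becomes precisely $\pr_{TM,j}$, $\nu\mapsto\half(\pr_{TM}\nu\otimes 1-\pr_{TM}(j\nu)\otimes i)$.

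The reality of $\JJ$ — the fact that it is a genuine real endomorphism of $\TT E$, so that $\bar L$ is its $-i$-eigenbundle — is what forces the quasi-real conditions. Conjugation interchanges the structures on $L$ and $\bar L$, so the complex bracket and anchor on $L$ must be the restriction to the $i$-eigenspace of the complexification of a \emph{real} dull bracket $\llb\cdot\,,\cdot\rrb$ and a real anchor $\rho=\pr_{TM}$ on $TM\oplus E^*$, with $\lambda=\rho_j$. For the converse direction I would reverse this construction: from $j$ build the double-vector-bundle morphism $\JJ$ with core $-j^t$, use the real dull bracket and anchor together with the chosen splitting to define the linear isotropic double subbundle $L=(TM\oplus E^*)^{1,0}$, and then verify that $\C$-bilinearity, the Jacobi identity, and quasi-reality yield involutivity of $L$, hence that $\JJ$ is orthogonal, squares to $-\id$, and is integrable.

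I expect the principal obstacle to be the middle step: faithfully translating the Courant involutivity of $L$ into the algebraic identities of the complex Lie algebroid while keeping precise track of the splitting-dependent Dorfman connection terms, and simultaneously checking that the reality constraint singles out exactly the quasi-real brackets. Verifying that the equivalence is independent of the choice of adequate splitting, and that the complexified real dull bracket restricts correctly to the $(1,0)$-part so as to match $[\cdot\,,\cdot]$, is where the technical care will be concentrated.
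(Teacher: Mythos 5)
Your strategy is sound and would work, but it is organised differently from the paper's proof. You take the eigenbundle picture as the engine: $\JJ$ corresponds to a pair of conjugate complex VB-Dirac structures $D_\pm$ in $\TT_\C E$, involutivity of $D_+$ corresponds to a complex Lie algebroid structure on the side $U_+\cong(TM\oplus E^*,j)$, and reality of $\JJ$ corresponds to quasi-reality. The paper develops this picture as well (Theorem \ref{thm_cplx_VB_Dirac} and Corollary \ref{cor_lgcs_VB_Dirac}), but its actual proof of Theorem \ref{main} stays real as long as possible: it computes the Nijenhuis tensor of $\JJ$ directly on linear and core sections with respect to an \emph{adapted} Dorfman connection, obtaining Theorem \ref{thm_lgcs_DMC} (a characterisation in terms of $j$, the bracket $\AAA$ of \eqref{eq_def_AAA}, and a $j$-equivalence class of Dorfman connections), and only afterwards uses the isomorphism $\nu\mapsto\half(\nu\mp ij\nu)$ onto $U_\pm$ (Proposition \ref{prop_AAA_U_lie}) to identify the result with a quasi-real structure; the converse is then the algebra around \eqref{quasi_real} plus the $j$-equivalence uniqueness. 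What the paper's route buys is explicit real formulas for the bracket and the integrability conditions, and the $j$-equivalence formulation needed for uniqueness; what your route buys is conceptual economy, since integrability of $\JJ$ being equivalent to involutivity of the $+i$-eigenbundle is standard, and the VB-Dirac characterisation then does the translation work.

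One caveat: both routes hinge on a nontrivial lemma that your plan treats as a mere choice, namely the existence of a \emph{real}, skew-symmetric Dorfman connection adapted to $\JJ$, i.e.\ one with $\JJ\circ\slift=\slift\circ j$ (Proposition \ref{prop_adapted_DMC}, proved in the paper via the change-of-splitting 2-forms of Lemma \ref{lem_change_psi}). Without it, the Courant bracket of lifts of sections of $U_+$ acquires $\Phi$-correction terms, so neither your translation of involutivity into the Lie algebroid axioms nor your reality argument closes; and it is precisely the realness of this adapted connection that exhibits the restricted bracket on $U_+$ as the complexification of a real dull bracket, which is the content of quasi-reality. So ``fix an adequate linear splitting'' is the step where a genuine argument is required rather than a choice; with that lemma supplied (and with the bracket of lifts understood as dull bracket plus curvature term, rather than a bracket that literally ``descends''), your plan goes through.
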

        In other words, a linear generalised complex structure with
        side $j$ on a vector bundle $E$ is equivalent to a special
        complex Lie algebroid structure on the complex vector bundle
        $(TM\oplus E^*\to M,j)$. The equivalence in Theorem \ref{main}
        is of course the most important part of the statement. It is
        explained along the introduction of the necessary tools: via
        this equivalence, a quasi-real Lie algebroid structure on
        $(TM\oplus E^*,j)$ is sent to a generalised complex structure
        $\mathcal J\colon TE\oplus T^*E\to TE\oplus T^*E$ such that
        any dull bracket on $TM\oplus E^*$ realising the complex Lie
        bracket on $(TM\oplus E^*, j)$ is \emph{adapted to} $\mathcal J$.

        In the case of a holomorphic vector bundle, the complex Lie
        algebroid found in Theorem \ref{main} is simply
        $T^{1,0}M\oplus (E^{0,1})^*\to M$, with the bracket defined by
        the complex Lie algebroid bracket on $T^{1,0}M$ (since $M$ is
        a complex manifold), and the flat $T^{1,0}M$-connection on
        $E^{0,1}$ that is complex conjugated to the
        $\bar\partial$-operator
        $\bar\partial\colon\Gamma(T^{0,1}M)\times\Gamma(E^{1,0})\to\Gamma(E^{1,0})$. See
        Example \ref{hol_ex_cpla}.

        \medskip

        Section \ref{sec_gcs_VB_CA} extends the results of Section
        \ref{sec_gcs_vb} to the more general case of linear
        generalised complex structures in VB-Courant
        algebroids. Making use of the correspondence between
        VB-Courant algebroids and Lie 2-algebroids
        \cite{LiBland12,Jotz19b}, this leads after the choice of a
        linear splitting to a definition of generalised complex
        structures in split Lie 2-algebroids.

        \subsection*{Generalised complex Lie algebroids}
        In Section \ref{sec_gcLA} the vector bundle is equipped with
        the additional structure of a Lie algebroid, and the linear
        generalised complex structure is required to be compatible
        with the Lie algebroid structure. The obtained
        \emph{generalised complex Lie algebroids} were already studied
        in \cite{JoStXu16}, where they are called ``Glanon
        algebroids''.  The paper \cite{JoStXu16} gives a
        correspondence between \emph{multiplicative} generalised
        complex structures on Lie groupoids and compatible generalised
        complex structures on Lie algebroids. Hence in order to better
        understand generalised complex Lie groupoids it is useful to
        study generalised complex Lie algebroids in this sense. The
        goal of this section is a deeper study of the properties of
        generalised complex Lie algebroids, in the spirit of the study
        of holomorphic Lie algebroids done in \cite{LaStXu08}: that
        paper studies holomorphic Lie algebroids in detail and shows
        an equivalence between holomorphic Lie algebroid structures on
        $A\to M$ and linear holomorphic Poisson structures on the
        complex dual $\Hom_\C(A,\C)$. Moreover, it shows that a
        holomorphic Lie algebroid structure on $A$ is equivalent to a
        matched pair \cite{Mokri97,Mackenzie11} of the complex Lie
        algebroids $T^{0,1}M$ and $A^{1,0}$. Additionally, for a
        complex manifold $M$, the Lie algebroids $T^{1,0}M$ and
        $T^{0,1}M$ form a matched pair of complex Lie algebroids with
        matched sum $T_\C M$ and more generally, for a holomorphic Lie
        algebroid $A$ the Lie algebroids $A^{1,0}$ and $A^{0,1}$ form
        a matched pair with matched sum $A_\C$.

        \cite{JoStXu16} proves that a Poisson holomorphic Lie
        algebroid is equivalent to a holomorphic Lie bialgebroid.
        More generally, Section \ref{sec_gcLA} proves the following theorem:
\begin{thm}\label{main2}
  Let $A\to M$ be a Lie algebroid. Let $\mathcal J\colon TA\oplus T^*A\to TA\oplus T^*A$ be a linear generalised complex
  structure on $A$ with side $j\colon TM\oplus A^*\to TM\oplus A^*$.
  Then $(A,\mathcal J)$ is a Glanon algebroid 
  if and only if the quasi-real complex Lie
        algebroid structure on $(TM\oplus A^*, j)$ found in Theorem \ref{main} fits
        in a complex Lie bialgebroid $(TM\oplus A^*,K_-)$ over $M$.
        \end{thm}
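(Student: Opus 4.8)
The plan is to identify both sides of the equivalence with a single structure on $\TT A = TA\oplus T^*A$ and then to read off each side after an adapted linear splitting. Recall first that the $+i$-eigenbundle $L\subset(\TT A)_\C$ of $\JJ$ is a complex Dirac structure, that integrability of $\JJ$ forces $L$ to be involutive, and that $L$ is exactly the complex Dirac structure whose reduction over $M$ is the quasi-real complex Lie algebroid $(TM\oplus A^*,j)$ of Theorem \ref{main}; its complex conjugate $\bar L\cong L^*$ reduces to a complex Lie algebroid on the complex dual bundle $T^*M\oplus A$. Without any further structure this produces only a single complex Lie algebroid over $M$, as in Theorem \ref{main}, together with the conjugate structure on the dual. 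The Lie algebroid structure on $A$ is what ties the two together: it endows $TA\to TM$ and $T^*A\to A^*$ with VB-algebroid structures, so that $\TT A\to TM\oplus A^*$ becomes a Lie algebroid and $\TT A$ carries simultaneously the standard Courant structure over $A$ and this Lie algebroid structure over $TM\oplus A^*$.

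The first step is to recall from \cite{JoStXu16} that $(A,\JJ)$ being a Glanon algebroid means precisely that $\JJ\colon\TT A\to\TT A$ is, on top of being a linear generalised complex structure, a morphism of the Lie algebroid $\TT A\to TM\oplus A^*$ over the side map $j$. Equivalently, $\JJ$ is an automorphism of the full double (Courant together with Lie algebroid) structure, so that $\TT A$ is an LA-Courant algebroid and $L$ is a Lie subalgebroid of $(\TT A)_\C\to TM\oplus A^*$. The second step is to reduce this compatibility over $M$. Fixing an adapted linear splitting as in Theorem \ref{main}, the complex bracket $[\cdot\,,\cdot]$ on $\Gamma(TM\oplus A^*,j)$ is realised by a dull bracket $\llb\cdot\,,\cdot\rrb$, while the Lie algebroid structure on $A$ descends through the same splitting to a complex Lie algebroid structure $K_-$ on the complex dual $T^*M\oplus A$, obtained as the reduction of $\bar L$ together with the $A$-bracket on the $A$-summand (in the holomorphic case this returns $A^{0,1}$, cf.\ the matched-pair discussion and Example \ref{hol_ex_cpla}).

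The heart of the proof is then to show that the morphism condition of the previous paragraph is equivalent to the Lie bialgebroid compatibility of $(TM\oplus A^*,j)$ with $K_-$, namely that the Chevalley--Eilenberg differential $d_{K_-}$ of the dual structure is a derivation of the complex bracket,
\[
  d_{K_-}[q_1,q_2]=[d_{K_-}q_1,q_2]+[q_1,d_{K_-}q_2],\qquad q_1,q_2\in\Gamma(TM\oplus A^*).
\]
I would prove this by a direct computation in the adapted splitting: expand $\JJ$ being a Lie algebroid morphism over $j$ into conditions on the structure functions of the dull bracket $\llb\cdot\,,\cdot\rrb$ and on those of the Lie algebroid $A$, and compare these term by term with the derivation identity above, using that $\llb\cdot\,,\cdot\rrb$ realises $[\cdot\,,\cdot]$ and that $K_-$ restricts to the $A$-bracket. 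Both implications then follow from the same set of identities, read in the two directions.

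The main obstacle I anticipate is precisely this last matching: tracking how the Lie algebroid structure on $A$ enters the Lie algebroid leg of $\TT A$ over $TM\oplus A^*$, descends through the chosen splitting, and interacts with the dull bracket realising the complex structure, is where the bulk of the technical work lies. In particular one must check that the resulting bialgebroid compatibility does not depend on the choice of adapted splitting, and that $K_-$ is correctly identified with the reduction of the conjugate Dirac structure $\bar L$ rather than some twist of it; once these identifications are pinned down, the derivation identity and the Glanon morphism condition become term-by-term equivalent and the theorem follows.
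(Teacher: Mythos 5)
Your strategy diverges from the paper's at the decisive step, and at exactly that step the proposal stops being a proof. The paper never verifies the bialgebroid derivation identity directly: it builds explicit complex Courant algebroids $C_\pm$ out of complex $A$-Manin pairs (Proposition \ref{prop_Cpm}), shows that $U_+\cong (TM\oplus A^*,j,\AAA)$ and $K_-$ embed as transversal Dirac structures in $C_+$, and then invokes the theorem of Liu--Weinstein--Xu \cite{LiWeXu97} to conclude that $(U_+,K_-)$ is a complex Lie bialgebroid, with Drinfeld double identified with $C_+$ (Theorem \ref{thm_iso_CA}). Your plan replaces this by a direct check that the Glanon condition is equivalent to $\dd_{K_-}$ being a derivation of the bracket on $TM\oplus A^*$. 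That could in principle work, but what you offer is only the announcement of the computation: ``expand $\JJ$ being a Lie algebroid morphism into conditions on structure functions and compare term by term'' is a restatement of the theorem, not an argument. The identity $\dd_{K_-}[q_1,q_2]=[\dd_{K_-}q_1,q_2]+[q_1,\dd_{K_-}q_2]$ takes values in $\Gamma\bigl(\wedge^2(TM\oplus A^*)\bigr)$, so the right-hand brackets are Schouten brackets, and $\dd_{K_-}$ is built from the degenerate bracket \eqref{eq_DCA_bra_DMC}; matching all of this against the three conditions of Theorem \ref{cor_lgcs_LA} (complex linearity of $(\rho,\rho^t)$, of $\nabla^\bas$ and of $R^\bas_\Delta$) is the entire content of the theorem, and you explicitly defer it as ``the bulk of the technical work.''

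The second gap is more fundamental: the complex Lie algebroid structure on $K_-$, without which the bialgebroid side of the equivalence is not even well defined, is never constructed, and your description of it is incorrect in general. First, the core of a VB-algebroid does not inherit a bracket by naive restriction, since brackets of core sections vanish (Theorem \ref{thm_TTA_LA}(3)); so ``the reduction of $\bar L$'' is not by itself a definition. Second, $K_-$ has no ``$A$-summand'': it is the $(-i)$-eigenbundle $\{\tau\otimes 1+j_C\tau\otimes i\,:\,\tau\in A\oplus T^*M\}$ of $j_{C,\C}$ inside $(A\oplus T^*M)_\C$, and it decomposes as $A^{0,1}\oplus(T^{1,0}M)^*$ only in the holomorphic case \eqref{eq_cplx_U_K}. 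Third, and worst for the logic of your ``if and only if'': taking the structure on $K_-$ to be the one induced by $\bar L=D_-$ presupposes that $D_-$ is an LA-Dirac structure, which by Corollary \ref{cor_LA_Dirac} is already equivalent to the Glanon condition --- the equivalence becomes circular, the bialgebroid side being defined only when the Glanon side holds. The paper resolves this by giving $K_-$ its bracket as the restriction of the degenerate Courant algebroid bracket $\llb\cdot\,,\cdot\rrb_{d}$ on $A_\C\oplus T^*_\C M$, a structure that exists for every Lie algebroid $A$ \cite{Jotz19a} (Section \ref{sec_deg_gc_ca}); that this bracket restricts to $K_-$ at all (Proposition \ref{prop_jC_gcs_decCA}, which uses the complex linearity of $\nabla^\bas$) then enters the equivalence as part of what is being characterised, rather than as a hidden hypothesis. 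Any correct proof, by your route or the paper's, must account for this point.
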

         Here, the complex Lie algebroid
        $TM\oplus A^*$ is identified with $U_+$, the $i$-eigenspace of
        $j_{\mathbb C}$ in $(TM\oplus A^*)_{\mathbb C}$ equipped with
        the complexification of a dull bracket realising the one on
        $(TM\oplus A^*,j)$, as in (2) of Definition \ref{def_qrca}.
        The space $K_-$ is then the $i$-eigenspace of
        $(-j^t)_{\mathbb C}$, hence
        $K_-\simeq (A\oplus T^*M)_{\mathbb C}/K_+\simeq U_+^*$, since
        $K_+$ is the annihilator of $U_+$. The Lie algebroid structure
        on $A$ induces a \emph{degenerate} Courant algebroid structure
        on $A\oplus T^*M$ (see Section \ref{sec_deg_gc_ca}), the
        complexification of which has $K_-$ and $K_+$ as Dirac
        structures.

        The Drinfeld double Courant algebroid of the obtained complex
        Lie bialgebroid is isomorphic to a Courant algebroid $C_\pm$,
        which is obtained via a construction with complex $A$-Manin
        pairs from $A$ and $U_\pm$ (as in \cite{Jotz19a}).  In the
        initially studied special case of holomorphic Lie algebroids,
        where the generalised complex structure is induced by a
        complex structure, these Courant algebroids $C_\pm$ decompose
        as a direct sum of Courant algebroids,
        $C^{1,0}_T\oplus C^{0,1}_A$, or $C^{0,1}_T\oplus C^{1,0}_A$,
        respectively. The Courant algebroid structure in $C_\pm$ is
        then the same as the matched pair Courant algebroid structure
        on these bundles already given in \cite{GrSt14}.  This matched
        pair of Courant algebroids arises from the aforementioned
        matched pair of Lie algebroids $(T^{0,1}M, A^{1,0})$ of
        \cite{LaStXu08}.  The Courant algebroids $C_\pm$ therefore are
        the generalised version of this matched sum Courant algebroid
        in the special case of a holomorphic Lie algebroid.
        
        \subsection*{Methodology}
       The key to the results in this paper is the equivalence of 
        linear splittings of $TE\oplus T^*E$ with
        $TM\oplus E^*$-Dorfman connections on $E\oplus T^*M$, in a
        similar way as linear splittings of $TE$ are equivalent to
        linear $TM$-connections on $E$; see \cite{Jotz18a}. Section
        \ref{sec_bg_gentan} recalls this correspondence.  Proposition
        \ref{prop_adapted_DMC} establishes the existence of an adapted
        Dorfman connection to any linear generalised almost complex
        structure $\mathcal J$ on $E$. This is a Dorfman connection
        the horizontal sections of which are preserved by the
        generalised complex structure $\mathcal J$.

        This allows then a description of the properties of the
        generalised complex structures $\JJ$ in terms of this adapted
        Dorfman connection and the side morphism
        $j\colon TM\oplus E^*\to TM\oplus E^*$, see Theorem
        \ref{thm_lgcs_DMC}. Theorem \ref{main} follows immediately
        from the study of the integrability of the linear generalised
        complex structure in terms of its side morphism and an adapted
        Dorfman connection.

                \subsection*{Outline of the paper}

                Section \ref{sec_Background} recalls some necessary
                background on Courant algebroids and generalised
                complex structures, on linear splittings of VB-Courant
                algebroids and Dorfman connections and on morphisms of
                2-representations of Lie algebroids. Section
                \ref{new_complex_section} recalls the description of
                holomorphic vector bundles via linear complex
                structures on a real vector bundle.

                Section
                \ref{sec_gcs_vb} then studies linear generalised
                complex structures on vector bundles in detail.  For
                the convenience of the reader the basic definitions
                and properties of generalised complex structures are
                given as well in Section \ref{sec_bg_gcs}.  This
                section proves the existence of an adapted Dorfman
                connection and uses it to construct the complex Lie
                algebroid in Theorem \ref{main}.

                In Section \ref{sec_gcLA}, the vector bundle is
                endowed with the additional structure of a Lie
                algebroid and the linear generalised complex structure
                $\JJ$ is assumed to be compatible with the Lie
                algebroid structure. This is equivalent to $\JJ$
                defining a morphism of 2-representations. Some results
                of \cite{LaStXu08} are expanded in this more general framework.
                
                Finally, Section \ref{sec_gcs_VB_CA} studies the more
                general case of a linear generalised complex structure
                in a VB-Courant algebroid. Appendix \ref{app_comparison} compares
                for completeness this paper's adapted Dorfman
                connections with the adapted generalised connections
                in \cite{CoDa19}.

      \subsection*{Prerequisites and notation}

      All manifolds and vector bundles in this paper are smooth and
      real.  The reader is referred to Section 2.3 of \cite{Jotz18a}
      for the definition of a double vector bundle, their morphisms
      and induced core morphisms and for the necessary background on
      linear and core sections, and on their linear splittings and
      dualisations.  Section 2.3 of \cite{Jotz18a} recalls the
      definition of a VB-algebroid, and also the equivalence of
      $2$-term representations up to homotopy (called here
      \emph{$2$-representations} for short) with linear decompositions
      of VB-algebroids \cite{GrMe10}. The notation used here is the
      same as in \cite{Jotz18a}. In particular, a linear splitting of
      a double vector bundle $(D,A,B,M)$ is written
      $\Sigma\colon A\times_MB\to D$, and the corresponding horizontal
      lifts are then
      $\sigma:=\sigma_A\colon \Gamma(A)\to \Gamma_B^l(D)$ and
      $\sigma:=\sigma_B\colon \Gamma(B)\to \Gamma_A^l(D)$. The reader
      is invited to consult also \cite{Pradines77,Mackenzie05,GrMe10}
      for more details on double vector bundles.

       Vector bundle projections are written
      $q_E\colon E\to M$, and $p_M\colon TM\to M$ for tangent bundles.
      Given a section $\varepsilon$ of $E^*$, the map
      $\ell_\varepsilon\colon E\to \R$ is the linear function
      associated to it, i.e.~the function defined by
      $e_m\mapsto \langle \varepsilon(m), e_m\rangle$ for all
      $e_m\in E$.  The set of global sections of a vector bundle
      $E\to M$ is denoted by $\Gamma(E)$.      
      Elements $e\otimes (a+ib)$ of the complexification $E_\C$ maz be
      written as $ae+ibe$ if there is no confusion with a complex 
      structure of the vector bundle $E$ itself. The dual of a vector 
      bundle morphism $\varphi$ is written as $\varphi^t$, to avoid 
      confusion with pullbacks.
\subsection*{Acknowledgements}

The authors thank Thiago Drummond, Vicente Cort{\'e}s, Ping Xu,
Chenchang Zhu for their helpful comments and
suggestions.

\addtocontents{toc}{\protect\setcounter{tocdepth}{2}}

\section{Background}\label{sec_Background}

This section recalls basic notions and results, in particular on
linear splittings of the generalised tangent bundle of a vector
bundle \cite{Jotz18a}. 

\subsection{Courant algebroids and generalised complex structures}
\label{sec_bg_gcs}

Let $(\mathbb E\to M, \rho, \langle\cdot\,,\cdot\rangle,
\llb\cdot\,,\cdot\rrb)$ be a Courant algebroid. That is 
\cite{LiWeXu97,Roytenberg99},
$\rho\colon \mathbb E\to TM$ is a vector bundle morphism over the
identity on $M$,
$\langle\cdot\,,\cdot\rangle\colon \mathbb E\times_M \mathbb E\to
\mathbb R$ is a non-degenerate bilinear pairing and
$\llb\cdot\,,\cdot\rrb$ is an $\R$-bilinear bracket on
$\Gamma(\mathbb E)$ such that
\begin{enumerate}
\item $\llb e_1, \llb e_2, e_3\rrb\rrb= \llb \llb e_1, e_2\rrb, e_3\rrb+ \llb
  e_2, \llb e_1, e_3\rrb\rrb$,
\item $\rho(e_1 )\langle e_2, e_3\rangle= \langle\llb e_1,
  e_2\rrb, e_3\rangle + \langle e_2, \llb e_1 , e_3\rrb\rangle$ and
\item $\llb e_1, e_2\rrb+\llb e_2, e_1\rrb =\rho^t\dr\langle e_1 ,
  e_2\rangle$
\end{enumerate}
for all $e_1, e_2, e_3\in\Gamma(\mathbb E)$ and $f\in C^\infty(M)$.
On the right-hand side of the third equation, $\mathbb E$ is
identified with $\mathbb E^*$ via the pairing.  The identity
$\rho\llb e_1, e_2\rrb = [\rho(e_1),\rho(e_2)]$
follows from the equations above for all
$e_1,e_2\in\Gamma(\mathbb E)$ 
\cite{Uchino02}.

The vector bundle
$\mathbb TM:= TM\oplus T^*M$ over a smooth manifold $M$ together with 
the natural anchor $\rho:=\pr_{TM}$, the symmetric pairing
$\langle (v_p,\theta_p), (w_p,\omega_p)\rangle=\theta_p(w_p)+\omega_p(v_p)$ 
and the \textbf{Courant-Dorfman bracket}
$\llb (X,\theta), (Y, \omega)\rrb=([X,Y],\ldr{X}\omega-\iota_{Y}\dr\theta)$ 
is the prototype of a Courant algebroid.  It is called here the 
\textbf{standard Courant algebroid over $M$}.

\begin{mydef}\label{def_gacs}
	A \textbf{generalised almost complex structure} in $\E$ is a vector 
	bundle morphism $\mathcal J\colon \E\rightarrow \E$ over $\id_M$ 
	such that $\mathcal J^2=-1$ and $\mathcal J$ is orthogonal with 
	respect to the pairing, i.e. $
		\langle \mathcal J(e_1),\mathcal J(e_2)\rangle=
		\langle e_1,e_2\rangle$, 
	for all sections $e_1,e_2\in\Gamma(\E)$.
\end{mydef}

\begin{mydef}\label{def_gcs}
	A generalised almost complex structure $\mathcal J\colon \E\to \E$ is called a
	\textbf{generalised complex structure} in $\E$ if the Nijenhuis 
	tensor of $\mathcal J$ vanishes:
	\[
		0=N_{\mathcal J}(e_1,e_2):=\llbracket e_1,e_2\rrbracket
		-\llbracket \mathcal J(e_1),\mathcal J(e_2)\rrbracket 
		+\mathcal J\bigl(\llbracket \mathcal J(e_1),e_2\rrbracket
		+\llbracket e_1,\mathcal J(e_2)\rrbracket\bigr)\,,
	\]
	for all sections $e_1,e_2\in\Gamma(\E)$.
\end{mydef}

A generalised complex structure in the standard Courant algebroid
$TM\oplus T^*M$ is simply called \textbf{a generalised complex
  structure on $M$}.

\begin{example}\label{ex_gcs_cplx}
	Given an almost complex structure 
	$J\colon TM\rightarrow TM$ the map $\JJ\colon \TT M\rightarrow \TT M$
	\[
		\JJ_J=\left(\begin{matrix}J&0\\
		0& -J^t
		\end{matrix}\right)
	\]
	is a generalised almost complex structure. It is a generalised
        complex structure if and only if $J$ is a complex structure on
        $M$.
\end{example}

Equivalently, generalised complex structures $\JJ$ in $\E$ can be
described by pairs of transversal, complex conjugated Dirac structures
in $\E_\C$, given by the $\pm i$-eigenbundles of $\JJ$. In fact, this
was the original definition in \cite{Hitchin03}, see also
\cite{Gualtieri04,Gualtieri11}.

\subsection{Dorfman connections and the generalised tangent bundle of
  a vector bundle}\label{sec_bg_gentan}
Let $q_E\colon E\to M$ be a vector bundle.  Then the tangent bundle
$TE$ has two vector bundle structures; one as the tangent bundle of
the manifold $E$, and the second as a vector bundle over $TM$. The
structure maps of $TE\to TM$ are the derivatives of the structure maps
of $E\to M$. The space $TE$ is a double vector bundle with core bundle
$E \to M$.
Linear splittings of $TE$ are equivalent to
linear horizontal subspaces $H\subseteq TE$, which in turn are
equivalent to linear $TM$-connections $\nabla$ in $E$.  For details on
these double vector bundles, their core and linear sections, on linear
splittings and on connections, consult \cite{Mackenzie05,Jotz18a}.

Dualising $TE$ as a vector bundle over $E$ gives the cotangent
bundle $T^*E$, which is itself a double vector bundle with sides $E$ and $E^*$
and core $T^*M$, see \cite{Mackenzie05}.
\begin{equation*}
\begin{xy}
\xymatrix{
TE \ar[d]_{Tq_E}\ar[r]^{p_E}& E\ar[d]^{q_E}\\
 TM\ar[r]_{p_M}& M}
\end{xy}\,\qquad
\begin{xy}
\xymatrix{
T^*E\ar[r]^{c_E}\ar[d]_{r_E} &E\ar[d]^{q_E}\\
E^*\ar[r]_{q_{E^*}}&M
}
\end{xy}
\end{equation*}
Consider the direct sum over $E$ of these two double vector bundles,
\begin{align*}
\begin{xy}
\xymatrix{
TE\oplus T^*E\ar[r]^{\quad \pi_E}\ar[d]_{\Phi_E} &E\ar[d]^{q_E}\\
TM\oplus E^*\ar[r]_{\quad q_{TM\oplus E^*}}&M
}\end{xy}
\end{align*}
with $\Phi_E=Tq_E\oplus r_E$.  
A subbundle $L\subseteq TE\oplus T^*E$ that is closed under the
addition of $TE\oplus T^*E\to TM\oplus E^*$, and complementary to
$T^qE\oplus (T^qE)^\circ$, is called a \textbf{linear horizontal subspace in
$TE\oplus T^*E$}. 

In the following, for any section $(e,\theta)$ of $E\oplus T^*M$, the
vertical section $(e,\theta)^\uparrow\in\Gamma_E(T^{q_E}E\oplus
(T^{q_E}E)^\circ)$ is the pair defined by
\[
(e,\theta)^\uparrow(e_m')=\left(\left.\frac{d}{dt}\right\an{t=0}e_m'+te(m), (T_{e'_m}q_E)^t\theta(m)\right)
\]
for all $e_m'\in E$.  By construction this is a core section of
$TE\oplus T^*E\to E$.  For any section $\phi$
of $\Hom(E,E\oplus T^*M)$, the core-linear section
$\widetilde\phi\in \Gamma_E^l(T^{q_E}E\oplus (T^{q_E}E)^\circ$ is
defined by $\widetilde\phi(e(m))=(\phi(e))^\uparrow(e(m))$ for all
$e\in\Gamma(E)$.  The double vector bundle $TE\oplus T^*E$ is
described in more detail in \cite{Jotz18a}, where also an equivalence
of linear splittings of $TE\oplus T^*E$ with Dorfman connections is
established.

A \textbf{Dorfman
  $TM\oplus E^*$-connection on $E\oplus T^*M$} is an $\R$-bilinear map
\[\Delta\colon \Gamma(TM\oplus E^*)\times\Gamma(E\oplus T^*M)\to\Gamma(E\oplus T^*M)\]
satisfying \cite{Jotz18a} \begin{enumerate}
\item $\Delta_\nu(f\cdot\tau)=f\cdot\Delta_\nu\tau+\ldr{\pr_{TM}(\nu)}(f)\cdot \tau$,
\item $\Delta_{f\cdot \nu}\tau=f\cdot \Delta_\nu\tau+\langle \nu,\tau\rangle\cdot(0,\dr f)$, and 
\item $\Delta_\nu(0,\dr f)=(0,\dr(\ldr{\pr_{TM}\nu}f))$
\end{enumerate}
for all $\nu\in\Gamma(TM\oplus E^*)$, $\tau\in\Gamma(E\oplus T^*M)$ and
$f\in C^\infty(M)$. By the first axiom, $\Delta$ defines a map
$\Delta\colon \nu\mapsto \Delta_\nu\in\operatorname{Der}(E\oplus
T^*M)$. The dual of this map in the sense of derivations defines a
\textbf{dull bracket on sections of $TM\oplus E^*$}, i.e.~an
$\R$-bilinear map
\[\llb\cdot\,,\cdot\rrb_\Delta\colon \Gamma(TM\oplus E^*)\times
  \Gamma(TM\oplus E^*)\to \Gamma(TM\oplus E^*)\] satisfying
\begin{enumerate}
\item $\pr_{TM}\llb
  \nu_1,\nu_2\rrb_\Delta=[\pr_{TM}\nu_1,\pr_{TM}\nu_2]$,
\item
  $\llb f_1\nu_1, f_2\nu_2\rrb=f_1f_2\llb
  \nu_1,\nu_2\rrb_\Delta+f_1\ldr{\pr_{TM}\nu_1}(f_2)\nu_2-f_2\ldr{\pr_{TM}\nu_2}(f_1)\nu_1$
\end{enumerate}
for all $\nu_1,\nu_2\in\Gamma(TM\oplus E^*)$ and $f_1,f_2\in
C^\infty(M)$.

Since the vector bundle $TM\oplus E^*$ is anchored by the morphism
$\pr_{TM}\colon TM\oplus E^*\to TM$, the $TM$-part of
$\llb \nu_1, \nu_2\rrb_\Delta +\llb \nu_2, \nu_1\rrb_\Delta$ is trivial and
this sum can be seen as an element of $\Gamma(E^*)$. Let
$\Jac_\Delta\in\Omega^3(TM\oplus E^*,TM\oplus E^*)$ be the Jacobiator
of the dull bracket
$\DMbra{\cdot}{\cdot}_{\Delta}$. Then
\begin{equation*}
\begin{split}
\Jac_\Delta(\nu_1,\nu_2,\nu_3):=\DMbra{\DMbra{\nu_1}{\nu_2}_\Delta}{\nu_3}_\Delta
	+\text{cyclic permutations}=
	R_\Delta(\nu_1,\nu_2)^t(\nu_3),
      \end{split}
    \end{equation*}
    with $R_\Delta\in\Omega^2(TM\oplus E^*, \Hom(E\oplus T^*M, E))$
    the curvature of the Dorfman connection. Hence a skew-symmetric
    dull bracket is a Lie algebroid bracket if and only if the
    corresponding Dorfman connection is flat.

Linear splittings of $TE\oplus T^*E$ are in bijection with dull
brackets on sections of $TM\oplus E^*$, or equivalently with Dorfman
connections
$\Delta\colon \Gamma(TM\oplus E^*)\times\Gamma(E\oplus
T^*M)\to\Gamma(E\oplus T^*M)$, see \cite{Jotz18a}. Choose such a
Dorfman connection. The horizontal lift
$\sigma:=\sigma_{TM\oplus E^*}^\Delta\colon\Gamma(TM\oplus
E^*)\to\Gamma^l_E(TE\oplus T^*E)$ is given by
\begin{equation}\label{eq_slift}
  \sigma(X,\epsilon)(e(m))=\left(T_me X(m), \dr
    \ell_\epsilon(e(m))\right)-\Delta_{(X,\epsilon)}(e,0)^\uparrow(e(m))
\end{equation}
for $e\in\Gamma(E)$ and any pair
$(X,\epsilon)\in\Gamma(TM\oplus E^*)$.  The natural pairing on fibres of
$TE\oplus T^*E\to E$ is then given by \cite{Jotz18a}
 $\left\langle \sigma(\nu_1), \sigma(\nu_2)\right\rangle=\ell_{\llb \nu_1, \nu_2\rrb_\Delta +\llb \nu_2, \nu_1\rrb_\Delta}$,
 $\left\langle \sigma(\nu),
    \tau^\uparrow\right\rangle=q_E^*\langle \nu, \tau\rangle$, and 
 $\left\langle \tau_1^\uparrow,
    \tau_2^\uparrow\right\rangle=0$
for $\nu, \nu_1,\nu_2\in\Gamma(TM\oplus E^*)$ and
$\tau,\tau_1, \tau_2\in\Gamma(E\oplus T^*M)$.  The following equations
follow for $\varphi,\psi\in\Gamma(\Hom(E,E\oplus T^*M))$,
$\nu\in\Gamma(TM\oplus E^*)$ and $\tau\in\Gamma(E\oplus T^*M)$
\begin{equation}\label{pairing_core_lin}
  \begin{split}
    \pair{\corelinear{\varphi}}{\slift(\nu)} =\ell_{\varphi^*(\nu)},
    \quad \langle\corelinear{\varphi},\tau^\uparrow\rangle= 0,\quad
    \langle \corelinear{\varphi},\corelinear{\psi}\rangle=0.
                      \end{split}
                    \end{equation}

                    The Courant-Dorfman bracket on sections of
                    $TE\oplus T^*E\to E$ is given by \cite{Jotz18a}
 \begin{enumerate}
 \item
   $\left\llb \sigma(\nu_1), \sigma(\nu_2)\right\rrb=\sigma\llb \nu_1,
   \nu_2\rrb_\Delta- \widetilde{R_\Delta(\nu_1,\nu_2)\circ \iota_E}$,
 \item
   $\left\llb \sigma(\nu), \tau^\uparrow\right\rrb=(\Delta_\nu\tau)^\uparrow$, and 
\item $\left\llb \tau_1^\uparrow,
    \tau_2^\uparrow\right\rrb=0$
\end{enumerate}
for $\nu, \nu_1,\nu_2\in\Gamma(TM\oplus E^*)$ and
$\tau,\tau_1, \tau_2\in\Gamma(E\oplus T^*M)$. Here,
$\iota_E\colon E\to E\oplus T^*M$ is the canonical inclusion.

The anchor $\Theta=\pr_{TE}\colon TE\oplus T^*E\to TE$ restricts to
the map $\partial_{E}=\pr_{E}\colon E\oplus T^*M\to E$ on the cores,
and defines an anchor
$\rho_{TM\oplus E^*}=\pr_{TM}\colon TM\oplus E^*\to TM$ on the side.
More precisely, the anchor of $(e,\theta)^\uparrow$ is
$e^\uparrow\in \mx^c(E)$ and
$\Theta(\sigma(\nu))=\widehat{\nabla_\nu^*}\in\mx(E)$, where the
linear connection
$\nabla\colon \Gamma(TM\oplus E^*)\times\Gamma(E)\to \Gamma(E)$ is
defined by $\nabla_\nu=\pr_E\circ\Delta_\nu\circ\iota_E$ for all
$\nu\in\Gamma(TM\oplus E^*)$.

      \begin{example}\label{def_std_DMC}
        Let $q\colon E\to M$ be a smooth vector bundle. Since a linear
        connection $\nabla\colon \mx(M)\times\Gamma(E)\to\Gamma(E)$ is
        equivalent to a linear horizontal space
        $H_\nabla\subseteq TE$, it also defines a linear horizontal
        space $H_\nabla\oplus H_\nabla^\circ\subseteq TE\oplus T^*E$. The corresponding Dorfman connection
        \[\Delta\colon\Gamma(TM\oplus E^*)\times\Gamma(E\oplus T^*M)\to\Gamma(E\oplus T^*M)
        \]
        is given by
        $\Delta_{(X,\epsilon)}(e,\theta)=(\nabla_Xe, \ldr{X}\theta+\langle\nabla^*\epsilon, e\rangle)
        $
        for $X\in\mx(M)$, $\theta\in\Omega^1(M)$, $e\in\Gamma(E)$ and
        $\epsilon\in\Gamma(E^*)$. This is the \textbf{standard Dorfman
          connection} defined by $\nabla$.
The corresponding dull bracket is   \begin{equation}\label{eq_std_bra}
\DMbra{(X,\epsilon)}{(Y,\eta)}_\Delta=
\Bigl([X,Y],\nabla^*_X\eta-\nabla^*_Y\epsilon\Bigr)\,
\end{equation}
for $X,Y\in\mx(M)$ and $\epsilon, \eta\in\Gamma(E^*)$.
        \end{example}

\medskip

The remainder of this section discusses changes of linear splittings
of $TE\oplus T^*E$.
\begin{mydef}\label{def_change_DMC}
	Given two $(TM\oplus E^*)$-Dorfman connections $\Delta^1$ 
	and $\Delta^2$ on $E\oplus T^*M$ with corresponding lifts $\sigma_1$
	and $\sigma_2$, the \textbf{change of splitting} from $\Delta^1$ to
	$\Delta^2$ is $\Phi_{12}\in\Gamma\bigl((TM\oplus E^*)^*\otimes 
	\Hom(E,E\oplus T^*M)\bigr)$ defined by the equation
	\[
		\corelinear{\Phi_{12}(\nu)}:=\sigma_2(\nu)-\sigma_1(\nu)\,,
	\]
	for any $\nu\in\Gamma(TM\oplus E^*)$. The change of splitting is called 
	\textbf{skew-symmetric} if 
	$\Psi_{12}(\nu_1,\nu_2):=\Phi_{12}(\nu_1)^t(\nu_2)$
	is skew-symmetric, that is $\Psi_{12}\in\Omega^2(TM\oplus E^*,E^*)$. 
	The form $\Psi_{12}$ is also called \textbf{change of splittings}. 
\end{mydef}
\begin{lemma}
	Given two Dorfman connections $\Delta^1$, $\Delta^2$ as above, 
	their corresponding dull brackets are related by 
	\begin{equation}\label{eq_change_bra}
		\DMbra{\nu_1}{\nu_2}_{\Delta^2}
		=\DMbra{\nu_1}{\nu_2}_{\Delta^1}
		+\bigl(0,\Psi_{12}(\nu_1,\nu_2)\bigr)\,.
	\end{equation} 
\end{lemma}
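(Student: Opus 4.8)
The plan is to transport the change of splitting $\Phi_{12}$ through the relation that defines a Dorfman connection from its dull bracket, and then to dualise. First I would observe that only the $E^*$-component of the dull bracket can change: by the first axiom of a dull bracket, $\pr_{TM}\DMbra{\nu_1}{\nu_2}_{\Delta^i}=[\pr_{TM}\nu_1,\pr_{TM}\nu_2]$ for $i=1,2$, so the difference $\DMbra{\nu_1}{\nu_2}_{\Delta^2}-\DMbra{\nu_1}{\nu_2}_{\Delta^1}$ has vanishing $TM$-part and is a section of $E^*\hookrightarrow TM\oplus E^*$. It then suffices to compute its pairing against every $\tau\in\Gamma(E\oplus T^*M)$; since such a section pairs trivially with the $T^*M$-summand, the content is to evaluate it on $\tau=\iota_E(e)$ for $e\in\Gamma(E)$ and to recognise the result as $\langle\Psi_{12}(\nu_1,\nu_2),e\rangle$.

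For the computation I would first record the duality relation
\[
  \langle\DMbra{\nu_1}{\nu_2}_{\Delta},\tau\rangle
  =\pr_{TM}(\nu_1)\langle\nu_2,\tau\rangle-\langle\nu_2,\Delta_{\nu_1}\tau\rangle,
\]
valid for any Dorfman connection $\Delta$. This is exactly the statement that the dull bracket is dual to $\Delta$ in the sense of derivations, and it can be read off from the data above: pairing $\llb\sigma(\nu_1),\sigma(\nu_2)\rrb=\sigma\DMbra{\nu_1}{\nu_2}_{\Delta}-\widetilde{R_\Delta(\nu_1,\nu_2)\circ\iota_E}$ with a core section $\tau^\uparrow$ and using $\langle\sigma(\mu),\tau^\uparrow\rangle=q_E^*\langle\mu,\tau\rangle$ and $\langle\corelinear{\varphi},\tau^\uparrow\rangle=0$ produces the left-hand side, while the Courant-algebroid compatibility of the pairing with the bracket, together with $\llb\sigma(\nu),\tau^\uparrow\rrb=(\Delta_\nu\tau)^\uparrow$ and the fact that the anchor of $\sigma(\nu)$ is a linear vector field over $\pr_{TM}\nu$, produces the right-hand side (after cancelling the common pullback $q_E^*$). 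Next I would feed in the change of splitting: substituting $\sigma_2(\nu)=\sigma_1(\nu)+\corelinear{\Phi_{12}(\nu)}$ into $\llb\sigma_i(\nu),\tau^\uparrow\rrb=(\Delta^i_\nu\tau)^\uparrow$ and using the bracket of a core-linear section against a core section from \cite{Jotz18a} yields the companion change-of-splitting formula for the Dorfman connections themselves,
\[
  \Delta^2_\nu\tau=\Delta^1_\nu\tau-\Phi_{12}(\nu)(\pr_E\tau),
\]
the correction depending on $\tau$ only through $\pr_E\tau\in\Gamma(E)$.

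Finally I would subtract the duality relation for $\Delta^1$ from that for $\Delta^2$, evaluated on $\tau=\iota_E(e)$. The $\pr_{TM}$-terms cancel and only the connection difference survives, giving
\[
  \langle\DMbra{\nu_1}{\nu_2}_{\Delta^2}-\DMbra{\nu_1}{\nu_2}_{\Delta^1},\iota_E(e)\rangle
  =-\langle\nu_2,(\Delta^2_{\nu_1}-\Delta^1_{\nu_1})\iota_E(e)\rangle
  =\langle\nu_2,\Phi_{12}(\nu_1)(e)\rangle
  =\langle\Phi_{12}(\nu_1)^t(\nu_2),e\rangle,
\]
the last step being the definition of the transpose with respect to the pairing. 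Since $\Psi_{12}(\nu_1,\nu_2)=\Phi_{12}(\nu_1)^t(\nu_2)$ and the difference lies in $\Gamma(E^*)$, non-degeneracy of the pairing forces $\DMbra{\nu_1}{\nu_2}_{\Delta^2}-\DMbra{\nu_1}{\nu_2}_{\Delta^1}=(0,\Psi_{12}(\nu_1,\nu_2))$, as claimed. I expect the main obstacle to be the sign bookkeeping: a convention enters in exactly two places, the duality relation (which I would pin down against the standard Dorfman connection of Example \ref{def_std_DMC}, where $\DMbra{(X,\epsilon)}{(Y,\eta)}_\Delta=([X,Y],\nabla^*_X\eta-\nabla^*_Y\epsilon)$ indeed matches) and the core-linear/core bracket from \cite{Jotz18a}; these must be taken with compatible conventions so that the correction appears with the sign making the final term $+(0,\Psi_{12}(\nu_1,\nu_2))$.
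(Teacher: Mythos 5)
Your proposal is correct and follows essentially the same route as the paper: both proofs pivot on the intermediate identity $\Delta^2_\nu\tau=\Delta^1_\nu\tau-\Phi_{12}(\nu)(\pr_E\tau)$ and then dualise it against the pairing between $TM\oplus E^*$ and $E\oplus T^*M$. The only minor difference is that the paper reads this identity off directly from the lift formula \eqref{eq_slift}, whereas you derive it from the Courant bracket relations $\llb \sigma(\nu),\tau^\uparrow\rrb=(\Delta_\nu\tau)^\uparrow$ together with the core-linear/core bracket; your sign bookkeeping, the reduction to $\tau=\iota_E(e)$, and the explicit dualisation are all correct.
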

\begin{proof}
	The definition of the change of splittings together with the 
	correspondence between lifts and Dorfman connections as in
	 \eqref{eq_slift}
	immediately gives that for any $\nu\in\Gamma(TM\oplus E^*)$ and 
	$\tau \in\Gamma(E\oplus T^*M)$
	\[
		\Delta^2_\nu\tau
		=\Delta^1_\nu\tau
		-\Phi_{12}(\nu)\bigl(\pr_E\tau\bigr)\,.
	\]
	Again dualising this equation gives the desired formula 
	\eqref{eq_change_bra} for the change of splittings for the 
	corresponding dull brackets.
\end{proof}
An immediate consequence is the following corollary. 
\begin{cor}
	If the Dorfman connection $\Delta_1$ is skew-symmetric, then $\Delta_2$ 
	is skew-symmetric if and only if the change of splitting is skew-symmetric.
      \end{cor}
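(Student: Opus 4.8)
The plan is to read the statement directly off the transformation formula \eqref{eq_change_bra} that the preceding lemma establishes, so the proof is essentially a one-line symmetrisation. First I would fix the terminology: a Dorfman connection is called \emph{skew-symmetric} exactly when its associated dull bracket is skew-symmetric, i.e.\ when $\DMbra{\nu_1}{\nu_2}_\Delta+\DMbra{\nu_2}{\nu_1}_\Delta=0$ for all $\nu_1,\nu_2\in\Gamma(TM\oplus E^*)$. I would also recall the observation made just before the definition of $\Jac_\Delta$: since $TM\oplus E^*$ is anchored by $\pr_{TM}$, the symmetric combination $\DMbra{\nu_1}{\nu_2}_\Delta+\DMbra{\nu_2}{\nu_1}_\Delta$ always has trivial $TM$-component and thus lies in $\Gamma(E^*)$. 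This is the bookkeeping point that makes the comparison below consistent, since the correction terms in \eqref{eq_change_bra} are of the form $(0,\,\cdot\,)\in\Gamma(E^*)$.

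Next I would simply add \eqref{eq_change_bra} to itself with the roles of $\nu_1$ and $\nu_2$ interchanged, obtaining
\[
  \DMbra{\nu_1}{\nu_2}_{\Delta^2}+\DMbra{\nu_2}{\nu_1}_{\Delta^2}
  =\DMbra{\nu_1}{\nu_2}_{\Delta^1}+\DMbra{\nu_2}{\nu_1}_{\Delta^1}
  +\bigl(0,\Psi_{12}(\nu_1,\nu_2)+\Psi_{12}(\nu_2,\nu_1)\bigr).
\]
Using the hypothesis that $\Delta^1$ is skew-symmetric, the first two terms on the right-hand side cancel, leaving
\[
  \DMbra{\nu_1}{\nu_2}_{\Delta^2}+\DMbra{\nu_2}{\nu_1}_{\Delta^2}
  =\bigl(0,\Psi_{12}(\nu_1,\nu_2)+\Psi_{12}(\nu_2,\nu_1)\bigr)
\]
for all $\nu_1,\nu_2\in\Gamma(TM\oplus E^*)$.

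Finally I would conclude by inspection of this identity. The left-hand side vanishes for all $\nu_1,\nu_2$, which is precisely the condition that $\Delta^2$ be skew-symmetric, if and only if $\Psi_{12}(\nu_1,\nu_2)+\Psi_{12}(\nu_2,\nu_1)=0$ for all $\nu_1,\nu_2$, which is exactly the condition that the change of splitting $\Psi_{12}$ be skew-symmetric in the sense of Definition \ref{def_change_DMC}. There is no genuine obstacle here: the only step requiring any care is the remark that the symmetric part of a dull bracket takes values in $\Gamma(E^*)$, which is what permits reading both sides of the displayed identity as sections of $E^*$ and hence comparing them term by term.
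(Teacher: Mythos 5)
Your proof is correct and is essentially the paper's own argument: the paper states this corollary as an ``immediate consequence'' of the lemma giving \eqref{eq_change_bra}, and your symmetrisation of that formula, using skew-symmetry of $\Delta^1$ to cancel the bracket terms, is precisely the one-line verification the authors leave implicit. Your remark that the symmetric part of a dull bracket lies in $\Gamma(E^*)$ correctly justifies comparing both sides as $E^*$-valued tensors, so nothing is missing.
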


      \subsection{VB-Courant algebroids}
      The linear Courant algebroid on $TE\oplus T^*E$ is a prototype
      of a VB-Courant algebroid. This section gives the general
      definition of a VB-Courant algebroid.

      A \textbf{metric double
        vector bundle} \cite{Jotz18a} is a double vector bundle
      $(D;A,B;M)$ equipped with a symmetric, non-degenerate fibrewise
      pairing $D\times_B D\rightarrow \R$, such that the induced map
      $D\rightarrow D^*_B$ is an isomorphism of double vector bundles.
      In particular the core must be isomorphic to $A^*$.
      A \textbf{VB-Courant algebroid} $(\E;Q,B;M)$
	is a metric double vector bundle 
	\[
		\begin{tikzcd}
		\E \ar[r] \ar[d] & B \ar[d]\\
		Q \ar[r] & M
		\end{tikzcd} 
	\]
	such that $\E\rightarrow B$ is a Courant algebroid,
	the anchor $\rho_\E\colon \E\rightarrow TB$ is linear, i.e.~a morphism of 
	double vector bundles over some morphism $\rho_Q\colon Q\rightarrow TM$ 
	and the Courant bracket is linear, that is
	\[\DMbra{\Gamma^\ell_B(\E)}{\Gamma^\ell_B(\E)}
		\subseteq\Gamma^\ell_B(\E), \quad \DMbra{\Gamma^\ell_B(\E)}{\Gamma^c_B(\E)}
		\subseteq\Gamma^c_B(\E), \quad \text{ and }\quad \DMbra{\Gamma^c_B(\E)}{\Gamma^c_B(\E)}=0.\]

	Given a VB-Courant algebroid $(\E;Q,B;M)$, a \textbf{VB-Dirac structure}
	in $\E$ is a sub-double vector bundle $(D;U,B;M)$ with $U\subseteq Q$ 
	such that $D\to B$ is a Dirac structure in $\E\to B$.

\begin{example}\label{ex_std_VB_CA}
  The standard Courant algebroid $\E=TE\oplus T^*E$ over a vector
  bundle $E$ is a VB-Courant algebroid with $Q=TM\oplus E^*$ and
  $B=E$. The subspaces $TE$ and $T^*E$ are VB-Dirac structures in
  $\E$.
\end{example}
\begin{example}\label{ex_tan_double_VB_CA}
  The tangent double of a Courant algebroid $E\rightarrow M$ is a
  VB-Courant algebroid, where $\E=TE$, $Q=E$ and $B=TM$. The anchor of
  $TE$ is given by $I\circ T\rho_E\colon TE\to T(TM)$, where
  $I\colon TTM\to TTM$ is the canonical involution \cite{Tulczyjew89,Mackenzie05},
  exchanging the two
  vector bundle structures $Tp_M$ and $p_{TM}$ of $TTM\to TM$.
\end{example}

\subsection{The generalised tangent bundle of a Lie algebroid}
Let here $A\to M$ be a Lie algebroid.
Then for $a\in\Gamma(A)$, the derivations $\LL_a$ of $\Gamma(TM\oplus A^*)$ 
and of $\Gamma(A\oplus T^*M)$ over $\rho(a)$ are defined by
	\begin{align*}
          \LL_a(X,\alpha)&:=\bigl([\rho(a),X],\LL_a\alpha\bigr),
                           \qquad \LL_a(a',\theta):=\bigl([a,a'],\LL_{\rho(a)}\theta\bigr)\,
	\end{align*}
        for $(X,\alpha)\in\Gamma(TM\oplus A^*)$ and
        $(a',\theta)\in\Gamma(A\oplus T^*M)$.
Fix a skew-symmetric Dorfman
connection
$	\Delta\colon\Gamma(TM\oplus A^*)\times \Gamma(A\oplus T^*M)
	\rightarrow \Gamma(A\oplus T^*M)$
and set \cite{Jotz18a}
	\begin{equation*}
		\Omega\colon \Gamma(TM\oplus A^*)\times \Gamma(A)\to 
		\Gamma(A\oplus T^*M), \quad \Omega_{(X,\alpha)}a:=\Delta_{(X,\alpha)}(a,0)-(0,\dr\langle\alpha, a\rangle).
	\end{equation*}
	Define then the \textbf{basic connections associated to $\Delta$}  by
	\begin{align*}
	\nabla^\bas_a(X,\alpha)&
	:=(\rho,\rho^t)\bigl(\Omega_{(X,\alpha)}a\bigr)+\LL_a(X,\alpha)\,,\\
	\nabla^\bas_a(a',\theta)&
	:=\Omega_{(\rho,\rho^t)(a',\theta)}a+\LL_a(a',\theta),
	\end{align*}
        for $a\in\Gamma(A)$, $(X,\alpha)\in\Gamma(TM\oplus A^*)$ and
        $(a',\theta)\in\Gamma(A\oplus T^*M)$.  These are ordinary
        linear $A$-connections on $TM\oplus A^*$ and $A\oplus T^*M$,
        respectively, which are dual to each other \cite{Jotz18a}.

The \textbf{basic 
	curvature} $R_\Delta^\bas\in\Omega^2(A,\Hom(TM\oplus A^*,A\oplus T^*M))$ 
	associated to $\Delta$ is defined by 
	\begin{equation}\label{eq_bas_curv}
	R^\bas_\Delta(a_1,a_2)\nu
	:=-\Omega_\nu [a_1,a_2]
	+\LL_{a_1}\bigl(\Omega_\nu a_2\bigr)
	-\LL_{a_2}\bigl(\Omega_\nu a_1\bigr)
        +\Omega_{\nabla^\bas_{a_2}\nu}a_1
	-\Omega_{\nabla^\bas_{a_1}\nu}a_2
      \end{equation}
      for $a_1,a_2\in\Gamma(A)$ and 
	$\nu\in\Gamma(TM\oplus A^*)$.
\cite{Jotz18a} shows 
	that $R_{\nabla^\bas}=R^\bas_\Delta\circ (\rho,\rho^t)$
	and $R_{\nabla^\bas}=(\rho,\rho^t)\circ R^\bas_\Delta$. 

\begin{thm}\label{thm_TTA_LA}\cite{Jotz18a}
  Let $A\to M$ be a Lie algebroid with anchor $\rho$ and let
  $\Delta$ a skew-symmetric Dorfman $TM\oplus A^*$-connection on
  $A\oplus T^*M$. Write $\Theta$ for the anchor of the Lie algebroid
  $\TT A$. Then 
	\begin{enumerate}
		\item $[\sigma^\Delta_A(a_1),\sigma^\Delta_A(a_2)]
			=\sigma^\Delta_A\bigl([a_1,a_2]\bigr)
			-\corelinear{R^\bas_\Delta(a_1,a_2)}$\,,
		\item $[\sigma^\Delta_A(a),\tau^\dagger]
			=\bigl(\nabla^\bas_a\tau\bigr)^\dagger$\,,
		\item $[\tau_1^\dagger,\tau_2^\dagger]=0$\,,
		\item $\Theta\bigl(\sigma^\Delta_A(a)\bigr)
		=\widehat{\nabla^\bas_a}\in\XX^\ell(TM\oplus A^*)$\,,
		\item $\Theta\bigl(\tau^\dagger\bigr)
		=\bigl((\rho,\rho^t)\tau\bigr)^\uparrow\in\XX^c(TM\oplus A^*)$
              \end{enumerate}
              for $a,a_1,a_2\in\Gamma(A)$ and
  $\tau,\tau_1,\tau_2\in\Gamma(A\oplus T^*M)$.
\end{thm}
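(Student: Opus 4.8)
The five identities are precisely the split form of a VB-algebroid bracket. Recall from \cite{GrMe10} that a linear splitting of a VB-algebroid $(D;A,B;M)$, with $D\to B$ the Lie algebroid, encodes its bracket as a $2$-representation of the side Lie algebroid $A$: a connection on $B$, a connection on the core, and a curvature term in $\Omega^2(A,\Hom(B,\text{core}))$. The plan is therefore to recognise $\TT A\to TM\oplus A^*$ as a VB-algebroid with side $A\to M$, and then to show that in the splitting determined by $\Delta$ its $2$-representation is exactly $(\nabla^\bas,\nabla^\bas,R^\bas_\Delta)$, so that the generic split-bracket formulas specialise to (1)--(5). For the first point I would use that $\TT A=TA\oplus T^*A$ is the sum over $A$ of the tangent prolongation $TA\to TM$ and the cotangent Lie algebroid $T^*A\to A^*$ attached to the linear Poisson structure on $A^*$; these are the VB-algebroids carrying the adjoint and coadjoint $2$-representations of $A$, and their sum is a VB-algebroid over $TM\oplus A^*$ with the direct-sum $2$-representation.

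I would first dispose of the routine identities. Formula (3) holds because in any VB-algebroid the bracket of two core sections vanishes. For the anchor, (5) says that $\Theta$ sends a core section $\tau^\dagger$ to the vertical lift of its image under the core-anchor $(\rho,\rho^t)\colon A\oplus T^*M\to TM\oplus A^*$, and (4) says that $\Theta(\sigma^\Delta_A(a))$ is the linear vector field $\widehat{\nabla^\bas_a}$ covering $\rho(a)$; both follow from the anchors of $TA$ and $T^*A$ together with the definition of $\nabla^\bas$ through $\LL_a$ and $\Omega$. Formula (2) then identifies the core connection, and I would check it by expanding the lift $\sigma^\Delta_A$ against $\Omega_{(X,\alpha)}a=\Delta_{(X,\alpha)}(a,0)-(0,\dr\langle\alpha,a\rangle)$ and the definition $\nabla^\bas_a(a',\theta)=\Omega_{(\rho,\rho^t)(a',\theta)}a+\LL_a(a',\theta)$.

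The substance is (1), where the obstruction to $\sigma^\Delta_A$ preserving brackets must be identified with $\corelinear{R^\bas_\Delta(a_1,a_2)}$. I would establish this first for a standard Dorfman connection (Example \ref{def_std_DMC}) built from an ordinary connection $\nabla$ on $A$: there the splitting is the direct sum of the $\nabla$-horizontal splitting of $TA$ and its annihilator splitting of $T^*A$, so $\sigma^\Delta_A(a)$ decomposes and (1) reduces to the separate adjoint- and coadjoint-representation brackets of $TA\to TM$ and $T^*A\to A^*$; a direct comparison then shows that \eqref{eq_bas_curv} and \eqref{eq_std_bra} reproduce, respectively, the direct sum of the adjoint and coadjoint basic curvatures and connections. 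To reach an arbitrary skew-symmetric $\Delta$ I would use the change-of-splitting formula \eqref{eq_change_bra}: such a $\Delta$ differs from a standard one by a skew-symmetric $\Psi_{12}\in\Omega^2(TM\oplus A^*,A^*)$, and I would verify that both sides of each of (1)--(5) transform compatibly under this change.

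The main obstacle is the curvature bookkeeping in (1). For a standard $\Delta$ the tangent and cotangent directions decouple and the computation is transparent, but for general $\Delta$ the term $\Psi_{12}$ feeds simultaneously into $\Omega$, into both basic connections, and into \eqref{eq_bas_curv}; the delicate step is to show that these contributions reassemble into exactly the change-of-splitting correction to $R^\bas_\Delta$, using the skew-symmetry of $\Psi_{12}$ together with the identity $\Jac_\Delta(\nu_1,\nu_2,\nu_3)=R_\Delta(\nu_1,\nu_2)^t(\nu_3)$ linking the dull bracket to the curvature. The relations $R_{\nabla^\bas}=R^\bas_\Delta\circ(\rho,\rho^t)$ and $R_{\nabla^\bas}=(\rho,\rho^t)\circ R^\bas_\Delta$ from \cite{Jotz18a} provide useful consistency checks along the way.
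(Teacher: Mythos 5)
First, a point you could not have known: the paper does not prove this theorem at all. It is quoted from \cite{Jotz18a} as background, and the only in-paper commentary is the sentence immediately following the statement, which interprets (1)--(5) as saying that $\bigl((\rho,\rho^t),\nabla^\bas,\nabla^\bas,R^\bas_\Delta\bigr)$ is the $2$-representation corresponding to the VB-algebroid $(TA\oplus_A T^*A\to TM\oplus A^*;A\to M)$ in the decomposition determined by $\Delta$. That gloss is exactly your starting point, so your structural reading is the intended one; the comparison below is therefore with the cited source rather than with an internal proof.

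Judged on its own terms, your strategy is correct but organised differently from the argument in \cite{Jotz18a}, which establishes (1)--(5) for an arbitrary skew-symmetric $\Delta$ by directly computing brackets and anchors of linear and core sections in the direct sum of the tangent prolongation $TA\to TM$ and the cotangent algebroid $T^*A\to A^*$. Your reduction -- prove the standard case, then change splittings -- is legitimate: standard Dorfman connections (Example \ref{def_std_DMC}) are skew-symmetric, so by the corollary at the end of Section \ref{sec_bg_gentan} any skew-symmetric $\Delta$ differs from a standard one by a skew-symmetric $\Psi_{12}\in\Omega^2(TM\oplus A^*,A^*)$, and in the standard splitting $H_\nabla\oplus H_\nabla^\circ$ everything is block-diagonal (the adjoint $2$-representation of \cite{GrMe10} on the $TA$-side plus its dual on the $T^*A$-side), so (1)--(5) there reduce to known results. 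Two remarks on the step you rightly call delicate. The identity $\Jac_\Delta(\nu_1,\nu_2,\nu_3)=R_\Delta(\nu_1,\nu_2)^t(\nu_3)$ will not help you: it concerns the curvature $R_\Delta$ of the Dorfman connection, a different object from the basic curvature \eqref{eq_bas_curv}. What actually closes the step is the change-of-decomposition (gauge) action on $2$-representations from \cite{GrMe10}, recalled in \cite{DrJoOr15}: one checks that $\Omega$, both basic connections and \eqref{eq_bas_curv} transform under $\Psi_{12}$ by exactly that action, including the quadratic term produced by the bracket of the two core-linear corrections to the lifts. That verification is genuine work, comparable in volume to the direct computation it replaces, and your proposal defers it rather than carrying it out; what your route buys in exchange is a transparent standard case and a conceptual explanation of the answer as adjoint $\oplus$ coadjoint.
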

That is \cite{Jotz18a}, the complex
$(\rho,\rho^t)\colon (A\oplus T^*M)[0]\to (TM\oplus A^*)[1]$, the
basic connections $\nabla^\bas$ and the basic curvature
$R^\bas_\Delta$ define the 2-representation corresponding to the
VB-algebroid $(TA\oplus_A T^*A\to TM\oplus A^*, A\to M)$
\cite{Jotz18a} in the decomposition corresponding to $\Delta$, see
\cite{GrMe10}.

\section{Holomorphic vector bundles and holomorphic Lie algebroids}\label{new_complex_section}
\label{sec_hol_vb}
This section gives a direct proof that a holomorphic structure on a 
vector bundle is equivalent to a linear complex structure on it. In 
particular, ``adapted'' connections are described in the language of 
linear complex structures. Finally, a holomorphic Lie algebroid $A$ 
with a choice of adapted connection gives rise to infinitesimal ideal 
systems in $A_\C$.

\subsection{Linear almost complex structures via connections}\label{new_complex_subsection}
Let $E$ be a vector bundle over a manifold $M$. A complex structure
in the fibres of $E$ is equivalent to a vector bundle morphism 
$j_E\colon E\to E$ over $\id_M$, such that $j_E^2=-\id_E$. 
Consider 
any connection $\nabla\colon \mx(M)\times\Gamma(E)\to\Gamma(E)$.  
Then the connection
\[\tilde\nabla\colon\mx(M)\times\Gamma(E)\to\Gamma(E),\qquad
  \tilde\nabla_Xe=\half(\nabla_Xe-j_E(\nabla_X(j_E(e))))
 \]
 satisfies $\tilde\nabla j_E=0$. Such a connection is called 
 \emph{complex-linear} connection on $E$, since it is 
 $\mathbb C$-linear in its second argument.

      Consider a holomorphic vector bundle $E\to M$.  Since $E$ is
      locally generated as a $\mathbb C$-vector bundle by holomorphic
      sections $e_1,\ldots, e_k$ of $q_E\colon E \to M$, the real
      vector bundle $E$ is locally generated by the holomorphic
      sections
      $e_1,\ldots,e_k, f_1:=i\cdot e_1, \ldots, f_k:=i\cdot e_k$.
      Then since these sections are holomorphic, they satisfy
  \begin{equation}\label{hol_sections_J}
    J_E\circ Te_l=Te_l\circ J_M \qquad \text{ and } \qquad J_E\circ Tf_l=Tf_l\circ J_M
  \end{equation}
  for $l=1,\ldots,k$. 
  The following lemma is easy to see in  holomorphic frames.
  \begin{lemma}\label{lem_int_1}
    Let $q_E\colon E\to M$ be a holomorphic vector bundle.
    If $e\in\mathcal E(U)$ is a holomorphic
  section, then the vector field $e^\uparrow \in\mx(E)$ is holomorphic as
  well, and for any $e\in\Gamma(E)$, the complex structure $J_E$ sends
  $e^\uparrow$ to $(j_E e)^\uparrow$.
\end{lemma}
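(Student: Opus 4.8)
The plan is to treat the two assertions separately: the second is formal and needs no holomorphy, while the first is most cleanly obtained from the flow of $e^\uparrow$.

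For the second assertion I would argue pointwise using the vertical tangent spaces. At a point $e'_m\in E$ the vertical tangent space $T^{q_E}_{e'_m}E$ is canonically identified with the fibre $E_m$ via $u\mapsto\frac{d}{dt}\big|_{t=0}(e'_m+tu)$, and under this identification the restriction of the manifold complex structure $J_E$ to $T^{q_E}E$ is exactly the fibrewise complex structure $j_E$: each fibre $E_m$ is a complex submanifold of $E$ whose induced complex structure is the $\C$-vector space structure encoded by $j_E$. Since $e^\uparrow(e'_m)$ is by definition the image of $e(m)\in E_m$ under this identification, applying $J_E$ yields the image of $j_E(e(m))$, that is $(j_E e)^\uparrow(e'_m)$. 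This is just the statement recorded after \eqref{diag_JE} that the core morphism of the double vector bundle morphism $J_E$ is $j_E$, applied to the core section $e^\uparrow$.

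For the first assertion I would use the flow of $e^\uparrow$ rather than writing the complex structure of $TTE$ explicitly. The flow of $e^\uparrow$ is the fibre translation $\Phi_t\colon E|_U\to E|_U$, $\Phi_t(e'_m)=e'_m+t\,e(m)$, which is complete because it is affine along the fibres. The key point is that each $\Phi_t$ is a biholomorphism of $E|_U$: it factors as $e'_m\mapsto(e'_m,(te)(q_E(e'_m)))$ followed by fibrewise addition $E\times_M E\to E$, and both maps are holomorphic — the first because $q_E$ is holomorphic and $e$ is a holomorphic section, the second because the addition of a holomorphic vector bundle is holomorphic. A complete real vector field on a complex manifold whose flow consists of biholomorphisms is a holomorphic vector field (equivalently $\ldr{e^\uparrow}J_E=0$); hence $e^\uparrow$ is holomorphic.

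The main obstacle is conceptual rather than computational: pinning down the correct notion of ``holomorphic vector field'' on the complex manifold $E$, which a priori involves the complex structure of the iterated tangent bundle $TTE$, and relating it to the flow. I would address this by invoking the standard equivalence (verifiable in local holomorphic coordinates) between $X\colon N\to TN$ being a holomorphic map, the vanishing $\ldr{X}J_N=0$, and the flow of $X$ acting by biholomorphisms; the flow computation above then closes the argument. As a cross-check, in a holomorphic trivialisation $E|_U\cong U\times\C^k$ with fibre coordinates $w_1,\dots,w_k$ and $e$ given by holomorphic coefficient functions $e^1,\dots,e^k$, one has $e^\uparrow=\sum_l e^l\,\partial_{w_l}$ with holomorphic coefficients, which exhibits both assertions directly.
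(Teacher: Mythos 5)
Your proof is correct, but it takes a genuinely different route from the paper's. The paper proves both assertions at once by a computation in a linear holomorphic chart $\widetilde{\varphi}\colon E\an{U}\to\tilde U\times\C^k$ built from a holomorphic frame: in such a chart $e^\uparrow$ reads $(q,z)\mapsto\bigl(q,0,z,f_1(q),\ldots,f_k(q)\bigr)$, which is visibly a holomorphic map when the coefficient functions $f_j$ are holomorphic, and $J_E$ is visibly multiplication by $i$ on the tangent fibres, whence $J_E\circ e^\uparrow=(j_Ee)^\uparrow$ for arbitrary smooth $e$ --- in other words, your closing ``cross-check'' is precisely the paper's proof, promoted from a sanity check to the main argument. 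You instead argue invariantly: for the core identity you use that each fibre $E_m$ is a complex submanifold of $E$ whose induced complex structure corresponds to $j_E$ under the canonical identification $T^{q_E}_{e'_m}E\cong E_m$ (correctly noting that no holomorphy of $e$ is needed there), and for the holomorphy of $e^\uparrow$ you observe that its flow consists of the fibre translations $\Phi_t=\mathrm{add}\circ(\id_E,te\circ q_E)$, which are biholomorphisms exactly because $q_E$, the section $e$ and the bundle addition are holomorphic, and you then invoke the standard equivalence between holomorphy of the map $X\colon N\to TN$, the condition $\ldr{X}J_N=0$, and the (local) flow acting by biholomorphisms. Both arguments are sound; yours is coordinate-free and isolates the structural reasons (fibres are complex submanifolds, translation by a holomorphic section is a biholomorphism), at the cost of outsourcing the work to those standard equivalences, whose verification is the same kind of chart computation the paper performs directly, so the paper's version is more self-contained and moreover exhibits the linear holomorphic charts that are used implicitly elsewhere in Section \ref{new_complex_section}. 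One small caution: your aside that the core identity ``is just the statement recorded after \eqref{diag_JE}'' should not be read as a justification, since that statement is exactly what the lemma is establishing; your submanifold argument, however, proves it independently, so no circularity actually enters.
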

That is, the complex structure $J_E\colon TE\to TE$ is a double vector
bundle morphism over the identity on $E$, the complex structure
$J_M\colon TM\to TM$ of the base $M$, and with core $j_E\colon E\to E$
-- as in \eqref{diag_JE}.
  The goal of this section is the proof of the converse: if a linear
  almost complex structure as in \eqref{diag_JE} is integrable, then
  the induced complex structures on $E$ and on $M$ make $E\to M$ a
  holomorphic vector bundle.  Consider therefore for the remainder of
  this section  a smooth
  vector bundle $E\to M$ with such a linear almost complex structure $(J_E,J_M)$
  as in \eqref{diag_JE} 
  with core morphism $j_E\colon E\to E$.
 \begin{mydef}\label{definition_adapted}
   A linear connection
   $\nabla\colon \mx(M)\times\Gamma(E)\to \Gamma(E)$ is
   \textbf{adapted to $J_E$} if the corresponding linear splitting
   $\Sigma\colon TM\times_ME\to TE$ of $TE$ satisfies
   $J_E\Sigma(v,e)=\Sigma(J_Mv,e)$ for all $(v,e)\in TM\times_M E$.
   Equivalently, the corresponding horizontal lift 
   $\sigma^\nabla\colon \mx(M)\to\mx^\ell(E)$ lifts the almost complex 
   structure on $M$ to the one on $E$, that is 
   $\sigma^\nabla(J_MX)=J_E\sigma^\nabla(X)$ for all $X\in\mx(M)$. 
    \end{mydef}
        
        Consider any linear splitting $\Sigma$ of the double vector bundle $TE$
  corresponding to a linear connection $\nabla\colon\mx(M)\times\Gamma(E)\to\Gamma(E)$.
  Then for all $X\in\mx(M)$, the vector field $J_E(\widehat{\nabla_X})$ is
  $q$-related with $J_M(X)\in \mx(M)$.  Hence
  \begin{equation*}
    J_E(\widehat{\nabla_X})=\widehat{\nabla_{J_MX}}+\widetilde{\psi(X)}
  \end{equation*}
  for a section $\psi(X)$ of $\End(E)$, which defines a form 
$\psi\in\Omega^1(M,\operatorname{End}(E))$. 
Since $J_E^2=-1$ and $J_M^2=-1$, this form satisfies
\[
  \psi(J_MX)=-j_E\circ\psi(X)
\]
for all $X\in\mx(M)$.
  Consider
  $\Sigma'\colon E\times_MTM\to TE$,
  \[ \Sigma'(e_m,v_m)=\Sigma(e_m,v_m)+_{TM}
  \bigl(T_m0^Ev_m+_{E}\half\overline{j_E\psi(v_m)(e_m)}\bigr)\,.
  \]
	Then a simple computation shows that $\Sigma'$ satisfies the condition
	of Definition \ref{definition_adapted}, hence showing the following
	proposition. 
  \begin{prop}\label{existence_adapted}
    Let $E\to M$ be a smooth vector bundle endowed with a linear almost
    complex structure $J_E\colon TE\to TE$ over $J_M\colon TM\to TM$,
    with core morphism $j_E\colon E\to E$.  
  Then there exists a linear $TM$-connection on $E$ that is adapted to $J_E$.
\end{prop}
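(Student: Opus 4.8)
The plan is to begin with an arbitrary linear $TM$-connection $\nabla$ on $E$ and to correct it by an $\End(E)$-valued one-form so that the corrected horizontal lift intertwines $J_M$ and $J_E$. The defect of $\nabla$ is exactly the form $\psi\in\Omega^1(M,\End(E))$ of \eqref{def_psi_complex}, and the single structural input I will use is the identity \eqref{psi_j}, namely $\psi(J_MX)=-j_E\circ\psi(X)$, which encodes $J_E^2=-\id$.

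First I would record how $J_E$ acts on core and core-linear sections. By Lemma \ref{lem_int_1} the complex structure sends $e^\uparrow$ to $(j_Ee)^\uparrow$, so on the core it is $j_E$; since $J_E$ is the identity in the fibre direction along which a core-linear section is linear, it follows that $J_E\widetilde{\phi}=\widetilde{j_E\circ\phi}$ for every $\phi\in\Gamma(\End(E))$. In particular $J_E\widetilde{j_E\circ\psi(X)}=\widetilde{j_E^2\,\psi(X)}=-\widetilde{\psi(X)}$.

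Next I would pass to the corrected splitting $\Sigma'$ of the statement, whose horizontal lift is $\widehat{\nabla'_X}=\widehat{\nabla_X}+\tfrac{1}{2}\,\widetilde{j_E\psi(X)}$ (the correction being the core term $\tfrac{1}{2}j_E\psi(v_m)(e_m)$). Applying $J_E$ and using \eqref{def_psi_complex} together with the preceding paragraph gives
\[
J_E\widehat{\nabla'_X}=\widehat{\nabla_{J_MX}}+\widetilde{\psi(X)}-\tfrac{1}{2}\widetilde{\psi(X)}=\widehat{\nabla_{J_MX}}+\tfrac{1}{2}\widetilde{\psi(X)},
\]
whereas \eqref{psi_j} yields $\widehat{\nabla'_{J_MX}}=\widehat{\nabla_{J_MX}}+\tfrac{1}{2}\widetilde{j_E\psi(J_MX)}=\widehat{\nabla_{J_MX}}+\tfrac{1}{2}\widetilde{\psi(X)}$. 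The two expressions coincide, so $\sigma^{\nabla'}(J_MX)=J_E\sigma^{\nabla'}(X)$ and $\nabla'$ is adapted in the sense of Definition \ref{definition_adapted}.

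The points that require care — and the main obstacle — are purely bookkeeping: one must check that $\tfrac{1}{2}j_E\psi$ is $C^\infty(M)$-linear in $X$, so that $\Sigma'$ is again a genuine linear splitting (equivalently $\nabla'$ a genuine connection), which follows from the tensoriality of $\psi$ noted before the proposition; and one must keep the two additions $+_{TM}$ and $+_E$ straight when translating the connection correction into the displayed formula for $\Sigma'$. Everything hinges on the sign identity $\psi(J_MX)=-j_E\psi(X)$, which is precisely what makes the factor $\tfrac{1}{2}$ cancel the defect exactly.
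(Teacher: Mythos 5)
Your proof is correct and follows essentially the same route as the paper: it defines the defect $\psi$ via \eqref{def_psi_complex}, exploits the identity \eqref{psi_j}, and corrects the splitting by the core-linear term $\tfrac{1}{2}\widetilde{j_E\psi(X)}$, which is exactly the paper's $\Sigma'$; you merely carry out explicitly the verification the paper dismisses as ``a simple computation''. (One cosmetic point: the fact $J_E(e^\uparrow)=(j_Ee)^\uparrow$ holds here by definition of the core morphism of the double vector bundle morphism $J_E$, not via Lemma \ref{lem_int_1}, which concerns the holomorphic case.)
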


By definition, a linear connection
$\nabla\colon \mx(M)\times\Gamma(E)\to\Gamma(E)$ is adapted to $J_E$
if and only if the following identity holds for all $e\in\Gamma(E)$,
$X\in\mx(M)$:
\begin{equation*}
  \begin{split}
    J_E(T_meX_m)-\left.\frac{d}{dt}\right\an{t=0}e_m+tj_E(\nabla_Xe)(m)
    &=J_E(\widehat{\nabla_X}(e(m)) =\widehat{\nabla_{J_MX}}(e(m))
    \\
  &=T_meJ_MX_m-\left.\frac{d}{dt}\right\an{t=0}e_m+t(\nabla_{J_MX}e)(m).
  \end{split}
  \end{equation*}
  
    In particular, if $E\to M$ is a holomorphic vector bundle, and 
  $e$ a holomorphic section, then $J_E\circ Te=Te\circ J_M$. Hence,
    $\nabla$ is adapted to $J_E$ if and only if  
    \begin{equation}\label{adapted_holo}
      j_E\nabla_Xe=\nabla_{J_MX}e
    \end{equation}
    for all $X\in\mx(M)$ and all \emph{holomorphic sections} $e\in\Gamma(E)$.
   Given in that case any $\C$-linear connection $\tilde{\nabla}$, then the
    connection $\nabla$, defined by 
    \begin{equation}
      \label{nabla_adapted}
      \nabla_Xe=\half\left(\tilde{\nabla}_Xe-j_E\tilde{\nabla}_{J_MX}e\right)
    \end{equation}
    for $X\in\mx(M)$ and \emph{holomorphic sections} $e\in\Gamma(E)$,
    is adapted to $J_E$ and $\C$-linear. 

      In general the following theorem shows that the
      existence of such a $\C$-linear adapted connection follows
      easily from the integrability of the linear almost complex
      structure $J_E$.

\begin{thm}\label{thm_dec_lin_com}
  Let $E\to M$ be a smooth vector bundle endowed with a linear almost
  complex structure $J_E\colon TE\to TE$ over $J_M\colon TM\to TM$,
  with core morphism $j_E\colon E\to E$.  Then $J_E$ is integrable if
  and only if
  \begin{enumerate}
    \item $J_M$ is integrable and
    \item there exists a $\mathbb C$-linear connection
  $\nabla\colon \mx(M)\times\Gamma(E)\to\Gamma(E)$ that is adapted to
  $J_E$,
\item the form $N_{R_\nabla,j_E}\in\Omega^2(M,\End(E))$  defined
    by
    \[N_{R_\nabla,j_E}(X,Y):=R_\nabla(X,Y)-R_\nabla(J_MX,J_MY)+j_E R_\nabla(J_MX,Y)
    +j_E R_\nabla(X,J_MY)\]
  vanishes.
  \end{enumerate}
\end{thm}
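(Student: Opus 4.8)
The plan is to fix a single linear connection $\nabla\colon\mx(M)\times\Gamma(E)\to\Gamma(E)$ that is \emph{simultaneously} $\C$-linear and adapted to $J_E$, and then to evaluate $N_{J_E}$ on a generating set of vector fields on $E$. Such a $\nabla$ exists: starting from any $\C$-linear connection (e.g.\ the $\tilde\nabla$-construction applied to an arbitrary one), the horizontal distribution can be corrected to an adapted one without destroying $\C$-linearity. The cleanest way to see this is that $H\mapsto J_E(H)$ is an affine involution on the space of $\C$-linear horizontal distributions — it preserves $\C$-linearity because $J_E$ is the identity on the side $E$ and has core $j_E$, and $J_E^2=-\id$ forces $J_E^2(H)=H$ — so it has a fixed point; equivalently one solves a linear equation for an $\End(E)$-valued correction $A$ to the connection, which is possible because the obstruction form $\psi^\nabla$ is $j_E$-anti-linear by \eqref{psi_j}. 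For such a $\nabla$ we have $J_E\sigma^\nabla(X)=\sigma^\nabla(J_MX)$ (adaptedness, Definition \ref{definition_adapted}), $\nabla j_E=0$ ($\C$-linearity), and on the remaining generators $J_E(e^\uparrow)=(j_Ee)^\uparrow$ and $J_E(\widetilde\phi)=\widetilde{j_E\circ\phi}$ for $\phi\in\Gamma(\End(E))$.

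Since $\mx(E)$ is generated over $C^\infty(E)$ by the horizontal lifts $\sigma^\nabla(X)$, $X\in\mx(M)$, and the core sections $e^\uparrow$, $e\in\Gamma(E)$, and since $N_{J_E}$ is a tensor, it suffices to evaluate it on these. I would use the structure equations of $TE$, namely $[\sigma^\nabla(X),\sigma^\nabla(Y)]=\sigma^\nabla([X,Y])-\widetilde{R_\nabla(X,Y)}$, $[\sigma^\nabla(X),e^\uparrow]=(\nabla_Xe)^\uparrow$ and $[e_1^\uparrow,e_2^\uparrow]=0$, together with the $J_E$-identities above. On two core sections every bracket entering $N_{J_E}$ vanishes, so $N_{J_E}(e_1^\uparrow,e_2^\uparrow)=0$. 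On one horizontal and one core section the four terms collapse, using $\nabla_X(j_Ee)=j_E\nabla_Xe$ (from $\nabla j_E=0$) and $j_E^2=-\id$, to give $N_{J_E}(\sigma^\nabla(X),e^\uparrow)=0$.

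The only surviving case is on two horizontal lifts. Expanding
\[
N_{J_E}(\sigma^\nabla(X),\sigma^\nabla(Y))=[\sigma^\nabla(X),\sigma^\nabla(Y)]-[\sigma^\nabla(J_MX),\sigma^\nabla(J_MY)]+J_E\bigl([\sigma^\nabla(J_MX),\sigma^\nabla(Y)]+[\sigma^\nabla(X),\sigma^\nabla(J_MY)]\bigr)
\]
and substituting the structure equations, the $\sigma^\nabla$-terms assemble (using $J_E\sigma^\nabla=\sigma^\nabla J_M$) into $\sigma^\nabla(N_{J_M}(X,Y))$, while the curvature terms, after applying $J_E(\widetilde\phi)=\widetilde{j_E\circ\phi}$, assemble exactly into $-\widetilde{N_{R_\nabla,j_E}(X,Y)}$ with $N_{R_\nabla,j_E}$ as in the statement. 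Hence
\[
N_{J_E}(\sigma^\nabla(X),\sigma^\nabla(Y))=\sigma^\nabla\bigl(N_{J_M}(X,Y)\bigr)-\widetilde{N_{R_\nabla,j_E}(X,Y)} .
\]
Because $\sigma^\nabla(\,\cdot\,)$ projects onto $TM$ under $Tq_E$ whereas the core-linear section $\widetilde{\,\cdot\,}$ projects to $0$, the two contributions are independent, so the right-hand side vanishes if and only if both $N_{J_M}(X,Y)=0$ and $N_{R_\nabla,j_E}(X,Y)=0$. Combining the three cases, $N_{J_E}=0$ is equivalent to $N_{J_M}=0$ together with $N_{R_\nabla,j_E}=0$; this yields both implications, the chosen $\nabla$ supplying condition (2) in the forward direction and carrying (1) and (3) backward.

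The step I expect to be the main obstacle is not the Nijenhuis computation, which is forced by the structure equations, but securing a connection that is at once adapted and $\C$-linear: adaptation (Proposition \ref{existence_adapted}) and $\C$-linearization are a priori competing modifications of the splitting, and one must verify they can be imposed simultaneously. The fixed-point argument for the involution $H\mapsto J_E(H)$ on $\C$-linear distributions settles this; the key algebraic fact making it work is that $\psi^\nabla$ of a $\C$-linear connection takes values in the $j_E$-commuting endomorphisms and is $j_E$-anti-linear in the $TM$-variable by \eqref{psi_j}, placing it precisely in the range of the correction operator.
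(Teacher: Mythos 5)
Your evaluation of $N_{J_E}$ on the generating vector fields is correct, and the implication ``(1)--(3) $\Rightarrow$ $J_E$ integrable'' is sound as you set it up, since there condition (2) itself hands you the $\C$-linear adapted connection that your computation needs. The genuine gap is precisely the step you flag as the main obstacle: the claim that a simultaneously $\C$-linear and adapted connection exists \emph{unconditionally}. That claim is false, and it is exactly what your converse direction (integrable $\Rightarrow$ (2),(3)) rests on. The map $H\mapsto J_E(H)$ does \emph{not} preserve $\C$-linear horizontal distributions in general: in the splitting defined by a $\C$-linear connection $\nabla$, one computes $J_E(H_\nabla)=H_{\nabla'}$ with $\nabla'=\nabla-j_E\circ\psi$ and $\psi$ as in \eqref{def_psi_complex}, and $\nabla'$ is again $\C$-linear if and only if every $\psi(X)$ \emph{commutes} with $j_E$. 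Identity \eqref{psi_j} does not give this: it constrains only left composition with $j_E$, namely $\psi(J_MX)=-j_E\circ \psi(X)$, and says nothing about $\psi(X)\circ j_E$. In fact, writing $\psi(X)=\psi^+(X)+\psi^-(X)$ for the $j_E$-commuting and $j_E$-anticommuting parts, a change $\nabla\mapsto\nabla+A$ within $\C$-linear connections (so $[A(X),j_E]=0$) changes $\psi$ by $X\mapsto j_EA(X)-A(J_MX)$, which commutes with $j_E$; hence $\psi^-$ is independent of the choice of $\C$-linear connection and is the precise obstruction to your lemma. It need not vanish: take $M=\R^2$ with coordinates $(x,y)$ and standard complex structure $J_M$, let $E=M\times\C$ with $j_E=i$, let $\nabla$ be the trivial connection and $B$ fibrewise complex conjugation; then $\psi(X):=dx(X)B+dx(J_MX)\,j_EB$ satisfies \eqref{psi_j} and anticommutes with $j_E$, so the linear almost complex structure defined in this splitting by $J_E(v,w):=\bigl(J_Mv,\,j_Ew+\psi(v)e\bigr)$ at $e\in E$ admits no $\C$-linear adapted connection. (It is of course not integrable -- that is the content of the theorem -- but your argument claims to produce the connection without using integrability, so it would have to apply here.) Note also that if your lemma were true, condition (2) would hold for every linear almost complex structure and would be redundant in the statement of the theorem.

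This is where the paper's proof takes a different, and necessary, route. It fixes a connection that is merely adapted (Proposition \ref{existence_adapted}; your fixed-point argument is valid on the space of \emph{all} horizontal distributions, on which $J_E$ does act affinely) and computes $N_{J_E}$ \emph{without} assuming $\C$-linearity. The mixed linear--core terms then do not vanish for free but contribute the additional condition \eqref{nij2}. Integrability therefore yields \eqref{nij2}, and \eqref{nij2} is exactly what makes the $\C$-linearisation $\nabla'_Xe=\half\left(\nabla_Xe-j_E\nabla_Xj_Ee\right)$ of the adapted $\nabla$ \emph{remain} adapted: the difference form $\omega(X)=\half\left(\nabla_X+j_E\nabla_Xj_E\right)$ satisfies $j_E\circ\omega(X)=\omega(J_MX)$ by \eqref{nij2}, whence $J_E(\widehat{\nabla'_X})=\widehat{\nabla'_{J_MX}}$, and then \eqref{nij1} applied to $\nabla'$ gives $N_{R_{\nabla'},j_E}=0$. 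In short, the reconciliation of adaptedness with $\C$-linearity is not free -- it is purchased with the integrability hypothesis -- and your proof must pass through this (or an equivalent) argument to close the converse direction.
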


\begin{rmk}
       In the setting above, a linear connection
        $\nabla\colon\mx(M)\times\Gamma(E)\to\Gamma(E)$ is adapted to $J_E$ 
        if and only if 
      $J_E(H_\nabla)=H_\nabla$.
      It satisfies $\nabla j_E=0$ if and only if
      $H_\nabla=Tj_E(H_\nabla)$.
    \end{rmk}
    
\begin{proof}[Proof of Theorem \ref{thm_dec_lin_com}]
  Choose a linear connection
  $\nabla\colon \mx(M)\times\Gamma(E)\to\Gamma(E)$ that is adapted to
  $J_E$ as in Definition \ref{definition_adapted} -- the existence of
  such a connection is given by Proposition \ref{existence_adapted}.
  That is, the linear vector fields $\widehat{\nabla_X}$, for
  $X\in\mx(M)$, all satisfy
  \[ J_E\circ \widehat{\nabla_X}=\widehat{\nabla_{J_MX}}.
  \]
  Then
  \begin{equation}\label{nij1}
    \begin{split}
      N_{J_E}\left(\widehat{\nabla_X},
        \widehat{\nabla_Y}\right)&=\left[\widehat{\nabla_X},\widehat{\nabla_Y}\right]-\left[J_E\widehat{\nabla_X},J_E\widehat{\nabla_Y}\right]
      +J_E\left[J_E\widehat{\nabla_X},\widehat{\nabla_Y}\right]+J_E\left[\widehat{\nabla_X},J_E\widehat{\nabla_Y}\right]
      \\
      &=
      \widehat{\nabla_{N_{J_M}(X,Y)}}-\widetilde{N_{R_\nabla,j_E}(X,Y)},
      \end{split}
    \end{equation}
  \begin{equation*}
    \begin{split}
    N_{J_E}\left(\widehat{\nabla_X}, e^\uparrow\right)&=\left[\widehat{\nabla_X},e^\uparrow\right]
		-\left[\widehat{\nabla_{J_MX}}, (j_Ee)^\uparrow\right]
		+(j_E\nabla_{J_MX}e)^\uparrow
		+(j_E\nabla_Xj_Ee)^\uparrow\\
                &=\left(\nabla_Xe-\nabla_{J_MX}j_Ee+j_E\nabla_{J_MX}e+j_E\nabla_Xj_Ee\right)^\uparrow.
                \end{split}
              \end{equation*}
              and
              \begin{equation*}
                N_{J_E}\left(e_1^\uparrow, e_2^\uparrow\right)=0
              \end{equation*}
              for all $X,Y\in\mx(M)$ and $e,e_1,e_2\in\Gamma(E)$.
              Since $\mx(E)$ is spanned as a $C^\infty(E)$-module by
              these linear and core sections, $N_{J_E}$ vanishes if
              and only if $N_{J_M}(X,Y)=0$ (by projecting \eqref{nij1}
              to $M$), $N_{R_\nabla,j_E}(X,Y)=0$ and
              \begin{equation}\label{nij2}
                \nabla_Xe-\nabla_{J_MX}j_Ee+j_E\nabla_{J_MX}e+j_E\nabla_Xj_Ee=0
              \end{equation}
              for all $X,Y\in\mx(M)$ and $e\in\Gamma(E)$. Therefore,
              if $N_{J_M}=0$, $N_{R_\nabla,j}=0$ and $\nabla j_E=0$, the almost
              complex structure $J_E$ is integrable.

              Assume conversely that $N_{J_E}=0$. Then $N_{J_M}=0$ and $J_M$ is integrable.  
             Define now a $\C$-linear connection $\tilde{\nabla}$ 
             by
            \[ \tilde{\nabla}_Xe=\half\nabla_Xe-\half j_E\nabla_X j_Ee\,.
            \]
            The difference between $\nabla$ and $\tilde{\nabla}$ is the form 
            $\omega\in\Omega^1(M,\End(E))$, given for $X\in\mx(M)$ and $e\in\Gamma(E)$ by 
            \[\omega(X)(e)=\half(\nabla_Xe+j_E\nabla_Xj_Ee)\,.
            \]
            By \eqref{nij2} $j_E\circ (\omega(X))=\omega(J_MX)$ and hence                        
            \begin{equation*}
              J_E\left(\widehat{\tilde{\nabla}_X}\right)=J_E\left(\widehat{\nabla_X}-
              \widetilde{\omega(X)}\right)
              =\widehat{\nabla_{J_MX}}-\widetilde{j_E\omega(X)}
              =\widehat{\nabla_{J_MX}}-\widetilde{\omega(J_MX)}
              =\widehat{\tilde{\nabla}_{J_MX}},
            \end{equation*}
            so $\tilde{\nabla}$ is still adapted to $J_E$. Therefore, by
            \eqref{nij1}, $N_{R_{\tilde{\nabla}},j_E}=0$.
  \end{proof}
 
  In the situation of the previous theorem, consider a complex-linear
  connection $\nabla\colon \mx(M)\times\Gamma(E)\to\Gamma(E)$, that is
  adapted to $J_E$. Consider the complexification
  $D\colon \Gamma(T_{\C}M)\times\Gamma(E)\to\Gamma(E)$ of $\nabla$ in 
  the first argument. Then $D$ is still $\C$-linear and   
its curvature
$R_D\in\Omega^2(T_\C M,\End_\C(E))$ is the complexification of
$R_\nabla$ in the first two arguments. 
The straight-forward
proof of the following proposition is left to the reader
\begin{prop}\label{prop_adapted_chern}
  In the situation of Theorem \ref{thm_dec_lin_com}, with $J_M$ 
  integrable and a $\C$-linear, adapted $\nabla$, then $N_{R_\nabla,j_E}=0$ 
  if and only if the complexification
  $D\colon \Gamma(T_{\mathbb C}M)\times\Gamma(E)\to\Gamma(E)$ splits into
      \begin{equation}\label{chern}
        D=D^{1,0}+D^{0,1}
        \end{equation}
      with $D^{1,0}\colon \Gamma(T^{1,0}M)\times\Gamma(E)\to\Gamma(E)$
     a linear connection and
     $D^{0,1}\colon\Gamma(T^{0,1}M)\times\Gamma(E)\to\Gamma(E)$
     a flat connection.
    \end{prop}

\begin{rmk}\label{complexification_rem}
  Let $j_\C\colon E_\C\to E_\C$ be the complexification of $j_E\colon E\to E$,
  and let $E^{1,0}$ and $E^{0,1}$ be as usual the eigenspaces of $j_\C$
  to the eigenvalues $i$ and $-i$, respectively.
 The complexification of $D$ in the
  second argument, or of $\nabla$ in both arguments, written
  $\nabla^\C\colon \Gamma(T_{\mathbb C}M)\times\Gamma(E_{\mathbb
    C})\to \Gamma(E_{\mathbb C})$ clearly preserves $j_\C$.
As a consequence, $\nabla^\C$ preserves $E^{1,0}$ and $E^{0,1}\subseteq E_\C$.
Denote by $\theta\colon E\to E^{1,0}$ the canonical isomorphism of 
$\C$-vector bundles, given by $\theta(e)=\half(e-ij_E(e))$.
Then the equality \[\nabla^\C_X\theta(e)= \theta(D_Xe)\] is immediate
for all $e\in\Gamma(E)$ and $X\in\Gamma(T_{\mathbb C}M)$. That is,
modulo the isomorphism $\theta$, the restriction of $\nabla^\C$ to
$E^{1,0}$ coincides with $D$.
  \end{rmk}
Together with the standard correspondence between holomorphic
structures on a vector bundle and flat connections
$D^{0,1}\colon \Gamma(T^{0,1}M)\times\Gamma(E)\to\Gamma(E)$ (see
\cite{Rawnsley79} and \cite{AtHiSi78}), 
this shows that an integrable linear
complex structure on a smooth vector bundle gives rise to a
holomorphic structure on $E$. To obtain the desired one-to-one
correspondence, it only remains to show that the map $J_E$ is
indeed the complex structure of the complex manifold $E$.
In order to do that, it is sufficient to prove that
    if $e\in\Gamma(E)$ is $D^{0,1}$-flat, i.e.~if $e$ is a holomorphic
    section on $E$, then
    \[ \left[\xi,e^\uparrow\right]_\C=0
    \]
    for all $\xi\in\Gamma_E(T^{0,1}E)$. Here, $e^\uparrow\in\mx(E)$ is
    identified with $e^\uparrow\in\Gamma(T^{1,0}E)$ via the canonical
    $\C$-linear isomorphism $TE\simeq T^{1,0}E$ as in Remark
    \ref{complexification_rem}.

    Consider therefore the complexification $(TE)_{\mathbb C}$ of $TE$
    (as a vector bundle over $E$).  It is a double vector bundle with
    sides $E$ and $T_{\mathbb C}M$, and with core $E_{\mathbb C}$.
    For $X\in\mx(M)$, $e\in\Gamma(E)$ and $z\in\mathbb C$ the equality
    \[(e\otimes z)^\uparrow=e^\uparrow\otimes z
    \]
    is immediate and it is easy to check that 
    \[ \widehat{\nabla^{\mathbb C}_{X\otimes
          z}}=\widehat{\nabla_X}\otimes z.      
    \]
    Since $\nabla$ is adapted to $J_E$, 
    the respective complexifications satisfy
    \[ J_E^\C\circ \widehat{\nabla^\C_X}=\widehat{\nabla^\C_{J_M^\C X}}
    \]
    for all $X\in \Gamma(T_\C M)$, and in particular
    $\widehat{\nabla^\C_X}\in\Gamma_E(T^{0,1}E)$
    for $X\in\Gamma(T^{0,1}M)$.
    Since $J_E$ is linear over $J_M$,
	$T^{0,1}E$ is a linear subbundle of
    $TE_{\mathbb C}$ over $T^{0,1}M$ and $E$, and with core
    $E^{0,1}$.  It is spanned as a vector bundle over $E$ by the
    sections $\widehat{\nabla^\C_{X}}$ and $e^\uparrow$ for
    $X\in\Gamma(T^{0,1}M)$ and $e\in\Gamma(E^{0,1})$.

    Since the Lie bracket of two core vector fields always vanishes,
    the only equality to check is
    \[ \left[\widehat{\nabla_X^\C}, \theta(e)^\uparrow\right]_\C=0
    \]
    for all $X\in\Gamma(T^{0,1}M)$ and $e\in\Gamma(E)$ a $D^{0,1}$-flat section.  This bracket is easily seen
    to be
      \begin{equation}\label{iis_conn}
        \begin{split}
          \left[\widehat{\nabla^\C_X},
            \theta(e)^\uparrow\right]_\C=\left(\nabla^\C_X\theta(e)\right)^\uparrow=\theta(D_Xe)^\uparrow
          =\theta\left(D^{0,1}_Xe\right)^\uparrow=0. 
        \end{split}
      \end{equation}
      using Remark \ref{complexification_rem}.

 \subsection{Holomorphic Lie algebroids and infinitesimal ideal systems}

 Given a complex manifold $M$, let $\Theta_M$ denote the sheaf of
 holomorphic vector fields on $M$.

Let $A\to M$ be a holomorphic vector bundle and let
$\rho\colon A\to TM$ be a holomorphic vector bundle map, called the
anchor.  Assume that the sheaf $\mathcal{A}$ of holomorphic sections of
$A\to M$ is a sheaf of complex Lie algebras, the anchor map $\rho$
induces a homorphism of sheaves of complex Lie algebras from
$\mathcal A$ to $\Theta_M$, and the Leibniz identity
\[ [X,fY]=\ldr{\rho(X)}f\cdot Y+f [X,Y] \] holds for all
$X,Y\in\mathcal {A}(U)$, $f\in\mathcal O_{M}(U)$, and all open subsets
$U$ of $M$. Then $A$ is a \emph{holomorphic Lie algebroid}, see
e.g.~\cite{LaStXu08} and references therein.

Since the sheaf $\mathcal {A}$ locally generates the
$C^\infty(M)$-module of all smooth sections of $A$, each holomorphic
Lie algebroid structure on a holomorphic vector bundle $A\to M$
determines a unique smooth real Lie algebroid structure on $A$.
Since $\rho\colon A\to TM$ is holomorphic, it satisfies
\[ J_{TM}\circ T\rho=T\rho\circ J_A\,.
\]
Restricting this to the cores 
gives 
$J_M\circ \rho=\rho\circ j_A$, 
which implies $\rho_{\C}(A^{0,1})\subseteq T^{0,1}M$.

Choose a $TM$-connection on $A$ as in Theorem \ref{thm_dec_lin_com}
and its complexification as in Remark \ref{complexification_rem}.
Denote by $\nabla^{0,1}$ the restriction
\[ \nabla^{0,1}\colon\Gamma(T^{0,1}M)\times\Gamma(A^{1,0})\to\Gamma(A^{1,0}).
\]
Recall that by Remark \ref{complexification_rem} it is just $D^{0,1}$
via the canonical $\C$-isomorphism $A\simeq A^{1,0}$. That is, the
$\nabla^{0,1}$-flat sections $a\in\Gamma(A^{1,0})$  are exactly the
holomorphic sections of $A\simeq A^{1,0}$, hence the elements of
$\mathcal A$ via this isomorphism.  If $a,b\in\Gamma_U(A^{1,0})$ are 
$\nabla^{0,1}$-flat (on $U\subseteq M$ open), then
\begin{enumerate}
\item[(i)] $[a,b]_{\mathbb C}$ is again holomorphic so $\nabla^{0,1}$-flat,
\item[(ii)] $\rho(a)\in\Theta_M(U)$ is a holomorphic vector field, so $\bar\partial$-flat,
  where
  \[ \bar\partial\colon\Gamma(T^{0,1}M)\times\Gamma(T^{1,0}M)\to
    \Gamma(T^{1,0}M), \qquad
    \bar\partial_XY=\pr_{T^{1,0}M}[X,Y]_\C
  \]
  is just the $\bar\partial$-operator of the holomorphic vector bundle
  $TM\to M$.
\item[(iii)] $[a,\nu]_\C\in\Gamma(A^{0,1})$ 
for all $\nu\in\Gamma(A^{0,1})$. 
\end{enumerate}

The first two assertions are immediate.
For the third one, consider
$a\in\mathcal A_U$. Then $a$ defines
$a^{1,0}=\half(a-ij(a))\in\Gamma_U(A^{1,0})$ and
$a^{0,1}=\half(a+ij(a))\in\Gamma_U(A^{0,1})$.  Further,
$A^{0,1}_U$ is spanned as a $C^\infty(U)$-module by sections $b^{0,1}$
defined in this manner.
Since for $a,b\in\mathcal A(U)$
\begin{equation*}
 \begin{split}
   [a^{1,0},b^{0,1}]_{\mathbb C}&=\frac{1}{4}[a-j(a)\otimes i,
   b+j(b)\otimes i]_{\mathbb C}
   =\frac{1}{4}\left([a,b]+j[a,b]\otimes i-j[a,b]\otimes i-[a,b]
   \right)=0,
 \end{split}
 \end{equation*}
 where $[a,j(b)]=[a,ib]=i[a,b]=j[a,b]$ follows from the fact that
 $\mathcal A$ is a sheaf of complex Lie algebras.
  Since $A^{1,0}=A_\C/A^{0,1}$, this shows that
  $(T^{0,1}M, A^{0,1}, \nabla^{0,1})$ is a complex infinitesimal
  ideal system \cite{JoOr14} in the complex Lie algebroid $A_\C$.

\begin{thm}
  Let $A\to M$ be a holomorphic vector bundle. If $A\to M$ is a
  holomorphic Lie algebroid then the triple
  $(T^{0,1}M, A^{0,1}, \nabla^{0,1})$ defined as above is an
  infinitesimal ideal system in the complex Lie algebroid $A_\C$.
  \end{thm}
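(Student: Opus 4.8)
The plan is to verify, one by one, that the triple $(T^{0,1}M, A^{0,1}, \nabla^{0,1})$ satisfies the defining conditions of a complex infinitesimal ideal system in $A_\C$ in the sense of \cite{JoOr14}, assembling the pieces already established in the discussion above. Recall that such a system consists of an involutive subbundle $F\subseteq T_\C M$, a subbundle $J\subseteq A_\C$ with $\rho_\C(J)\subseteq F$, and a flat $F$-connection $\nabla$ on the quotient $A_\C/J$, subject to the three compatibility requirements that: (a) the anchor sends $\nabla$-flat classes to sections of $T_\C M/F$ that are flat for the Bott connection; (b) the bracket of two $\nabla$-flat sections is again $\nabla$-flat; and (c) $[a,\tau]_\C\in\Gamma(J)$ whenever $\bar a\in\Gamma(A_\C/J)$ is $\nabla$-flat and $\tau\in\Gamma(J)$. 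Here I take $F=T^{0,1}M$, $J=A^{0,1}$, and use the identification $A_\C/A^{0,1}\cong A^{1,0}$ so that $\nabla$ is the flat connection $\nabla^{0,1}=D^{0,1}$.

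First I would record the structural inputs, all of which are already in hand. Involutivity of $T^{0,1}M$ holds because $M$ is a complex manifold (as $J_M$ is integrable); flatness of $\nabla^{0,1}$ is exactly the flatness of $D^{0,1}$ supplied by Theorem \ref{thm_adapted_chern}; and the inclusion $\rho_\C(A^{0,1})\subseteq T^{0,1}M$ follows from holomorphicity of the anchor via $J_M\circ\rho=\rho\circ j_A$, as noted above. The essential dictionary is the identification, under $A\simeq A^{1,0}$, of the $\nabla^{0,1}$-flat sections with the holomorphic sections, that is the elements of $\mathcal A$; this is what allows each compatibility condition to be translated into a statement about holomorphic objects.

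With this dictionary, conditions (a) and (b) are immediate. For (b), the bracket of two holomorphic sections is holomorphic since $\mathcal A$ is a sheaf of complex Lie algebras, and hence $\nabla^{0,1}$-flat. For (a), the anchor of a holomorphic section is a holomorphic vector field, and these are precisely the $\bar\partial$-flat sections of $T^{1,0}M\cong T_\C M/T^{0,1}M$, where $\bar\partial_XY=\pr_{T^{1,0}M}[X,Y]_\C$ is exactly the Bott connection of $T^{0,1}M$ on this quotient.

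The only condition demanding genuine computation is the ideal property (c), and this is where I expect the main obstacle to lie. Here I would reduce the claim to showing $[a,\nu]_\C\in\Gamma(A^{0,1})$ for holomorphic $a$ and $\nu\in\Gamma(A^{0,1})$. Writing $\nu$ locally as a combination of sections $b^{0,1}=\half(b+ij(b))$ with $b\in\mathcal A$, this follows from the vanishing $[a^{1,0},b^{0,1}]_\C=0$ established above. The crucial input is the $\C$-bilinearity of the complex Lie bracket, which yields $[a,j(b)]=j[a,b]$; expanding $[a^{1,0},b^{0,1}]_\C$ then produces four terms that cancel in pairs. This cancellation, resting entirely on the sheaf-of-complex-Lie-algebras structure of $\mathcal A$, is the heart of the argument and the single step that is not purely formal.
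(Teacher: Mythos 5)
Your proposal is correct and follows essentially the same route as the paper: the paper's proof is precisely the verification of the three conditions you label (b), (a), (c) — its items (i), (ii), (iii) — with the first two immediate from holomorphicity of $\mathcal A$ and $\rho$, and the ideal condition reduced to the computation $[a^{1,0},b^{0,1}]_\C=0$ via the $\C$-bilinearity $[a,j(b)]=j[a,b]$ of the sheaf bracket. You correctly identify this last cancellation as the one non-formal step, exactly as in the paper.
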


\section{Linear generalised complex structures and Dorfman connections
}
\label{sec_gcs_vb}

Let $E$ be a vector bundle over a smooth manifold $M$. Fix a
skew-symmetric Dorfman connection
\begin{equation}\label{eq_DMC}
	\Delta\colon\Gamma(TM\oplus E^*)\times\Gamma(E\oplus T^*M)
	\rightarrow\Gamma(E\oplus T^*M)\,,
\end{equation}
and therefore a horizontal lift
$\slift\colon \Gamma(TM\oplus E^*)\to \Gamma^\ell_E(TE\oplus
T^*E)$. Consider a double vector bundle morphism
$\JJ\colon TE\oplus T^*E \to TE\oplus T^*E$ as in \eqref{diag_J}.
This section gives conditions on $j$ and $j_C$ for the morphism $\JJ$
to be a generalised complex structure.

\begin{lemma}\label{lem_J_lift}
	Given a double vector bundle morphism $\JJ$ over $j$ as in \eqref{diag_J}
	and a skew-symmetric Dorfman connection $\Delta$ as in \eqref{eq_DMC},
	there is a section $\Phi\in\Gamma((TM\oplus E^*)^*\otimes 
	E^*\otimes(E\oplus T^*M))$ such that for any $\nu\in\Gamma(TM\oplus E^*)$
	\[
		\mathcal{J}\left(\sigma^\Delta(\nu)\right)=\sigma^\Delta(j(\nu))
		+\corelinear{\Phi(\nu)}\,.
	\]
	Here, $(TM\oplus E^*)^*\otimes E^*\otimes(E\oplus T^*M)$ is  identified with 
	$\Hom(TM\oplus E^*,\Hom(E,E\oplus T^*M))$.
\end{lemma}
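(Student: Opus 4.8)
The plan is to produce $\Phi$ as the defect measuring the failure of $\slift$ to intertwine $\JJ$ with $j$, and then to check that this defect is tensorial in $\nu$. First I would argue that $\JJ(\slift(\nu))$ is again a \emph{linear} section of $TE\oplus T^*E\to E$, now lying over $j(\nu)\in\Gamma(TM\oplus E^*)$. Indeed $\slift(\nu)\in\Gamma^\ell_E(TE\oplus T^*E)$ is a linear section over $\nu$, and $\JJ$ is a double vector bundle morphism covering $\id_E$ on the $E$-side and $j$ on the $(TM\oplus E^*)$-side; hence $\JJ\circ\slift(\nu)$ is a section of $TE\oplus T^*E\to E$ (because $\JJ$ covers $\id_E$) whose image in $TM\oplus E^*$ under $\Phi_E=Tq_E\oplus r_E$ is $j\circ\nu$. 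Consequently $\JJ(\slift(\nu))-\slift(j(\nu))$ is a linear section lying over the zero section of $TM\oplus E^*$, that is, a core-linear section. By the identification of core-linear sections of $TE\oplus T^*E\to E$ with $\Gamma(\Hom(E,E\oplus T^*M))$ recalled above, there is then a unique $\Phi(\nu)\in\Gamma(\Hom(E,E\oplus T^*M))$ with $\JJ(\slift(\nu))=\slift(j(\nu))+\corelinear{\Phi(\nu)}$.

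It remains to verify that $\nu\mapsto\Phi(\nu)$ is $C^\infty(M)$-linear, so that $\Phi$ defines a section of $(TM\oplus E^*)^*\otimes\Hom(E,E\oplus T^*M)$. Here I would use three scaling compatibilities, all taken with respect to the $E$-vector bundle structure (the one covered by $\id_E$, hence preserved by $\JJ$). First, being the horizontal lift of a linear splitting, $\slift$ satisfies $\slift(f\nu)=q_E^*f\cdot\slift(\nu)$ for $f\in C^\infty(M)$; this can be read off from \eqref{eq_slift}, where the two $\dr f$-contributions — one from $\dr\ell_{f\epsilon}=\ell_\epsilon\,\dr(q_E^*f)+q_E^*f\,\dr\ell_\epsilon$, the other from axiom (ii) of the Dorfman connection, $\Delta_{f\nu}(e,0)=f\Delta_\nu(e,0)+\langle\epsilon,e\rangle(0,\dr f)$ — cancel exactly. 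Second, since $\JJ$ is a vector bundle morphism over $\id_E$ for the $E$-structure, it is $q_E^*f$-linear, so $\JJ(q_E^*f\cdot\slift(\nu))=q_E^*f\cdot\JJ(\slift(\nu))$. Third, $j$ is a bundle map over $\id_M$, hence $j(f\nu)=f\,j(\nu)$, while core-linear sections scale as $q_E^*f\cdot\corelinear{\phi}=\corelinear{f\phi}$.

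Combining these three facts gives
\[
  \corelinear{\Phi(f\nu)}=\JJ(\slift(f\nu))-\slift(j(f\nu))
  =q_E^*f\cdot\bigl(\JJ(\slift(\nu))-\slift(j(\nu))\bigr)
  =q_E^*f\cdot\corelinear{\Phi(\nu)}=\corelinear{f\,\Phi(\nu)},
\]
so $\Phi(f\nu)=f\,\Phi(\nu)$, as required, and $\Phi$ is identified with an element of $\Hom(TM\oplus E^*,\Hom(E,E\oplus T^*M))$. The one point deserving care is the first scaling identity: one must confirm that $\slift$ is genuinely $C^\infty(M)$-linear with respect to the pullback multiplication in the $E$-structure, and that this $E$-multiplication is precisely the module structure preserved by $\JJ$ (which holds because $\JJ$ covers $\id_E$). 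Once these three module structures are matched in this way, the tensoriality of $\Phi$, and hence the lemma, follows immediately.
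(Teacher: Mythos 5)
Your proposal is correct and follows essentially the same route as the paper's proof: identify $\JJ(\slift(\nu))-\slift(j(\nu))$ as a core-linear section over $j(\nu)$, then establish tensoriality $\Phi(f\nu)=f\Phi(\nu)$ from the $C^\infty(M)$-module property of $\slift$, the fibrewise linearity of $\JJ$ over $\id_E$, and the linearity of $j$. The only difference is that you spell out the cancellation of the $\dr f$-terms in \eqref{eq_slift} to justify $\slift(f\nu)=q_E^*f\,\slift(\nu)$, which the paper simply cites as the known module property of horizontal lifts.
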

\begin{proof}
	Since $\mathcal{J}$ is a vector bundle morphism over $j$, 
	$\mathcal{J}\left(\sigma^\Delta(\nu)\right)$ is a linear section over $j(\nu)$. 
	Thus $\mathcal{J}\left(\sigma^\Delta(\nu)\right)-\sigma^\Delta(j(\nu))$ is a core-linear 
	section and this gives for every $\nu$ a section 
	$\Phi(\nu)\in\Gamma(\Hom(E,E\oplus T^*M))$, such that
	\[
		\mathcal{J}\left(\sigma^\Delta(\nu)\right)=\sigma^\Delta(j(\nu))
		+\corelinear{\Phi(\nu)}\,.
	\]
	Since $\JJ$ and $j$ are vector bundle morphisms and
        $\slift$ is a morphism of $C^\infty(M)$-modules, 
	\begin{align*}
          \corelinear{\Phi(f\nu)}&=\JJ\left(\sigma^\Delta(f\nu)\right)-\sigma^\Delta(j(f\nu))
                                   =\JJ\left(q_E^*f\sigma^\Delta(\nu)\right)-\sigma^\Delta(j(f\nu))\\
                                 &=q_E^*f\JJ\left(\sigma^\Delta(\nu)\right)-q_E^*f\sigma^\Delta(j(\nu))
                                   =q_E^*f\corelinear{\Phi(\nu)}=\corelinear{f\Phi(\nu)}
	\end{align*}
        for $f\in C^\infty(M)$ and
        $\nu\in\Gamma\in\Gamma(TM\oplus E^*)$.  That is,
        $\Phi(f\nu)=f\Phi(\nu)$ and
        $\Phi\in\Gamma((TM\oplus E^*)^*\otimes E^*\otimes(E\oplus
        T^*M))$.
\end{proof}

\begin{rmk}\label{lem_J_core_lin}
  In the situation of Lemma \ref{lem_J_lift}, the following equations hold for $\tau\in\Gamma(E\oplus T^*M)$, $\nu\in\Gamma(TM\oplus E^*)$
	and $\varphi\in\Gamma(\Hom(E,E\oplus T^*M))$.
	\begin{enumerate}
        \item $\JJ\left(\tau^\uparrow\right)=j_C(\tau)^\uparrow$. This
          follows directly from the definition of the core morphism
          $j_C$.
        \item $\JJ(\widetilde{\varphi})=\corelinear{j_C\circ\varphi}$. This is an easy calculation using the first equation:
          For $\epsilon\in\Gamma(E^*)$ and $\tau\in\Gamma(E\oplus T^*M)$,
  $\JJ(\widetilde{\epsilon\otimes\tau})=\JJ(\ell_\epsilon\cdot\tau^\uparrow)
    =\ell_\epsilon\cdot\JJ(\tau^\uparrow)=\ell_\epsilon\cdot j_C(\tau)^\uparrow
    =\corelinear{j_C\circ(\epsilon\otimes\tau)}$.
                \end{enumerate}
      \end{rmk}
      
The map $\JJ$ is a generalised complex structure if and only if
\begin{enumerate}
\item $\JJ^2=-\id_{TE\oplus T^*E}$,
\item $\JJ$ is orthogonal, and 
\item the Nijenhuis tensor of $\mathcal J$ vanishes.
\end{enumerate}

The following two lemmas give conditions on $j$, $j_C$ and
$\Phi$ for the map $\JJ\colon TE\oplus T^*E\to TE\oplus
T^*E$ to satisfy the first two properties. The third property is
studied in the next section.

\begin{lemma}\label{lem_J_squ}
  In the situation of Lemma \ref{lem_J_lift}, $\JJ^2=-\id_{TE\oplus T^*E}$ if and only if 
	\begin{enumerate}
		\item $j^2=-\id_{TM\oplus E^*}$\,,
		\item $j_C^2=-\id_{E\oplus T^*M}$\,, 
		\item $\Phi(j(\nu))=-j_C\circ(\Phi(\nu))$ for all 
                  $\nu\in\Gamma(TM\oplus E^*)$.
	\end{enumerate}
\end{lemma}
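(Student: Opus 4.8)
The plan is to verify the identity $\JJ^2=-\id_{TE\oplus T^*E}$ by comparing $\JJ^2$ with $-\id$ on a set of generators of $\Gamma_E(TE\oplus T^*E)$, and then to read off the three stated conditions from the uniqueness of the decomposition of a linear section into its horizontal lift and its core-linear part. Recall that, as a $C^\infty(E)$-module, $\Gamma_E(TE\oplus T^*E)$ is generated by the horizontal lifts $\sigma^\Delta(\nu)$, $\nu\in\Gamma(TM\oplus E^*)$, together with the core sections $\tau^\uparrow$, $\tau\in\Gamma(E\oplus T^*M)$, since every core-linear section is of the form $\ell_\epsilon\cdot\tau^\uparrow$ as recorded in the proof of Remark \ref{lem_J_core_lin}. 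Because $\JJ^2$ and $-\id$ are both vector bundle morphisms over $\id_E$, and hence $C^\infty(E)$-linear on sections, it suffices to compare them on these two families.

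First I would compute $\JJ^2$ on a horizontal lift. Applying Lemma \ref{lem_J_lift} twice, and using Remark \ref{lem_J_core_lin} to evaluate $\JJ$ on the core-linear summand, gives
\[
\JJ^2\bigl(\sigma^\Delta(\nu)\bigr)=\JJ\bigl(\sigma^\Delta(j\nu)+\corelinear{\Phi(\nu)}\bigr)=\sigma^\Delta(j^2\nu)+\corelinear{\Phi(j\nu)+j_C\circ\Phi(\nu)}.
\]
Next I would compute $\JJ^2$ on a core section, using the first identity of Remark \ref{lem_J_core_lin} twice:
\[
\JJ^2\bigl(\tau^\uparrow\bigr)=\JJ\bigl(j_C(\tau)^\uparrow\bigr)=(j_C^2\tau)^\uparrow.
\]

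The two directions then follow by matching components. For the ``if'' direction, substituting $j^2=-\id$, $j_C^2=-\id$ and $\Phi(j\nu)=-j_C\circ\Phi(\nu)$ into the two displayed formulas yields $\JJ^2(\sigma^\Delta(\nu))=-\sigma^\Delta(\nu)$ and $\JJ^2(\tau^\uparrow)=-\tau^\uparrow$, so that $\JJ^2$ and $-\id$ agree on generators, hence everywhere. For the ``only if'' direction, assume $\JJ^2=-\id$. Evaluating on $\tau^\uparrow$ and using that $\tau\mapsto\tau^\uparrow$ is injective gives $j_C^2\tau=-\tau$, which is condition (2). Evaluating on $\sigma^\Delta(\nu)$, and using that $-\sigma^\Delta(\nu)=\sigma^\Delta(-\nu)+\corelinear{0}$ together with the uniqueness of the splitting of a linear section into an $\sigma^\Delta$-part and a core-linear part, the horizontal components force $j^2\nu=-\nu$, i.e.~condition (1), while the core-linear components force $\Phi(j\nu)+j_C\circ\Phi(\nu)=0$, i.e.~condition (3).

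I do not expect a genuine obstacle: once Lemma \ref{lem_J_lift} and Remark \ref{lem_J_core_lin} are available, the computation is short and mechanical. The only point requiring care is the bookkeeping of double vector bundle sections --- namely that the horizontal lifts and core sections really do generate $\Gamma_E(TE\oplus T^*E)$ as a $C^\infty(E)$-module, and that the decomposition of a linear section as $\sigma^\Delta(\nu)+\corelinear{\varphi}$ is unique, so that the horizontal and core-linear parts in the expression for $\JJ^2(\sigma^\Delta(\nu))$ may be compared independently.
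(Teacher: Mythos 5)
Your proof is correct and follows essentially the same route as the paper's: evaluate $\JJ^2$ on horizontal lifts and core sections using Lemma \ref{lem_J_lift} and Remark \ref{lem_J_core_lin}, then compare side, horizontal and core-linear components (the paper even records the same key display for $\JJ^2(\slift(\nu))$). Your explicit justification that lifts and core sections generate $\Gamma_E(TE\oplus T^*E)$ as a $C^\infty(E)$-module, and that the decomposition into lift plus core-linear part is unique, only makes explicit what the paper leaves implicit.
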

\begin{proof}
	It is sufficient to check that $\JJ^2=-\id_{TE\oplus T^*E}$ on lifts 
	$\slift(\nu)$ for any $\nu\in\Gamma(TM\oplus E^*)$ and on core sections 
	$\tau^\uparrow$ for any $\tau\in\Gamma(E\oplus T^*M)$, as those sections
	span $\Gamma_{TM\oplus E^*}(TE\oplus T^*E)$ as a $C^\infty(TM\oplus E^*)$-module.
        (2) is immediate by evaluating $\JJ$ on core sections, using Remark
	\ref{lem_J_core_lin}.
        
	For linear sections, the definition of $\Phi$ and Remark
	\ref{lem_J_core_lin} yield 
	\begin{equation}\label{eq_Jsq_lin}\begin{split}
		\JJ^2\left(\slift(\nu)\right)&=\JJ\bigl(\slift(j(\nu))+\corelinear{\Phi(\nu)}\bigr)\\
		&=\slift(j^2(\nu))+\corelinear{\Phi(j(\nu))}
		+\corelinear{j_C\circ(\Phi(\nu))}\,.
	\end{split}\end{equation}
	If $\JJ^2=-\id_{TE\oplus T^*E}$, then the side morphism $j$ has 
	to satisfy $j^2=-\id_{TM\oplus E^*}$. Now \eqref{eq_Jsq_lin} implies
	that 
	\begin{equation}\label{eq_Phi}
		\Phi(j(\nu))=-j_C\circ(\Phi(\nu))\,.
	\end{equation}
	Conversely, if $j^2=-\id_{TM\oplus E^*}$ and 
	$\Phi(j(\nu))=-j_C\circ(\Phi(\nu))$, then \eqref{eq_Jsq_lin} 
	yields immediately  $\JJ^2(\slift(\nu))=-\slift(\nu)$ for all 
	$\nu\in\Gamma(TM\oplus E^*)$.
\end{proof}

\begin{lemma}
	\label{lem_J_ort}
	A double vector bundle morphism $\JJ$ as in \eqref{diag_J} is 
	orthogonal if and only if for any skew-symmetric Dorfman connection 
	$\Delta$ as in \eqref{eq_DMC}:
	\begin{enumerate}
		\item $(j_C)^t=j^{-1}$\,, 
		\item $\Phi(\nu_2)^t(j\nu_1)=-\Phi(\nu_1)^t(j(\nu_2))$ for all 
	$\nu_1,\nu_2\in\Gamma(TM\oplus E^*)$.
	\end{enumerate}
\end{lemma}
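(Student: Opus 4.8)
The plan is to test orthogonality on a set of generators of $\Gamma_{TM\oplus E^*}(TE\oplus T^*E)$ as a $C^\infty(E)$-module, namely on the horizontal lifts $\slift(\nu)$ and the core sections $\tau^\uparrow$. Since $\JJ$ is $C^\infty(E)$-linear over $\id_E$ and the Courant pairing is $C^\infty(E)$-bilinear, it suffices to verify $\langle\JJ(s),\JJ(s')\rangle=\langle s,s'\rangle$ for $s,s'$ ranging over these generators; this splits into three cases (core--core, linear--core, linear--linear). The inputs are Lemma \ref{lem_J_lift}, which gives $\JJ(\slift(\nu))=\slift(j\nu)+\corelinear{\Phi(\nu)}$, the first item of Remark \ref{lem_J_core_lin}, which gives $\JJ(\tau^\uparrow)=j_C(\tau)^\uparrow$, and the explicit pairing formulas recalled before \eqref{pairing_core_lin} together with \eqref{pairing_core_lin} itself.

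The core--core case is automatic, since $\langle\tau_1^\uparrow,\tau_2^\uparrow\rangle=0$ and $\langle j_C(\tau_1)^\uparrow,j_C(\tau_2)^\uparrow\rangle=0$, so no condition arises. For the linear--core case I would expand
\[
\langle\JJ(\slift(\nu)),\JJ(\tau^\uparrow)\rangle=\langle\slift(j\nu)+\corelinear{\Phi(\nu)},\,j_C(\tau)^\uparrow\rangle=q_E^*\langle j\nu,j_C\tau\rangle,
\]
using that core-linear sections pair trivially with core sections. Comparing with $\langle\slift(\nu),\tau^\uparrow\rangle=q_E^*\langle\nu,\tau\rangle$ and using non-degeneracy of the natural pairing between $TM\oplus E^*$ and its dual $E\oplus T^*M$ yields $\langle j\nu,j_C\tau\rangle=\langle\nu,\tau\rangle$ for all $\nu,\tau$, i.e.\ $\langle\nu,j^t j_C\tau\rangle=\langle\nu,\tau\rangle$, hence $j^t\circ j_C=\id$. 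Taking transposes this is exactly $(j_C)^t=j^{-1}$, which is condition (1).

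For the linear--linear case I would expand $\langle\JJ(\slift(\nu_1)),\JJ(\slift(\nu_2))\rangle$ into four terms: the two mixed linear/core-linear terms contribute $\ell_{\Phi(\nu_2)^t(j\nu_1)}+\ell_{\Phi(\nu_1)^t(j\nu_2)}$ via \eqref{pairing_core_lin}, the core-linear/core-linear term vanishes, and the $\slift$--$\slift$ term equals $\ell_{\llb j\nu_1,j\nu_2\rrb_\Delta+\llb j\nu_2,j\nu_1\rrb_\Delta}$. The decisive point is that $\Delta$ is skew-symmetric as in \eqref{eq_DMC}, so its dull bracket is skew-symmetric and the symmetric part $\llb\nu_1,\nu_2\rrb_\Delta+\llb\nu_2,\nu_1\rrb_\Delta$ vanishes, and likewise for $j\nu_1,j\nu_2$. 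Thus $\langle\slift(\nu_1),\slift(\nu_2)\rangle=0$ and the linear--linear orthogonality condition collapses to $\ell_{\Phi(\nu_2)^t(j\nu_1)+\Phi(\nu_1)^t(j\nu_2)}=0$; injectivity of $\epsilon\mapsto\ell_\epsilon$ then gives $\Phi(\nu_2)^t(j\nu_1)=-\Phi(\nu_1)^t(j\nu_2)$, which is condition (2). Since the three cases exhaust all pairs of generators and the core--core case is vacuous, orthogonality is equivalent to (1) and (2) together.

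The only genuinely delicate step is this last one: without the skew-symmetry hypothesis on $\Delta$ the $\slift$--$\slift$ pairing would retain the dull-bracket symmetrisations and the condition on $\Phi$ would be contaminated by bracket terms. Recognising that the skew-symmetry of $\Delta$ kills these symmetrisations, so that both $\langle\slift(\nu_1),\slift(\nu_2)\rangle$ and its image under $\JJ$ reduce to the $\Phi$-terms, is what makes the clean statement (2) hold. The remaining bookkeeping---tracking the transpose conventions between $j$ on $TM\oplus E^*$ and $j^t,j_C$ on the dual $E\oplus T^*M$---is routine.
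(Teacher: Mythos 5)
Your proof is correct and follows essentially the same route as the paper's: checking orthogonality on the three types of pairs of generators (core--core, lift--core, lift--lift), using Lemma \ref{lem_J_lift}, Remark \ref{lem_J_core_lin} and the pairing formulas \eqref{pairing_core_lin} to extract conditions (1) and (2). The only difference is presentational: you make explicit the role of the skew-symmetry of $\Delta$ in killing the symmetrised dull-bracket terms $\ell_{\llb\cdot\,,\cdot\rrb_\Delta+\llb\cdot\,,\cdot\rrb_\Delta}$, which the paper uses implicitly when it writes $\pair{\slift(\nu_1)}{\slift(\nu_2)}=0$.
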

\begin{proof}
  Again it is sufficient to check the orthogonality of $\JJ$ on core
  sections and on horizontal lifts. For core sections this is
  immediate, since the pairing of two core sections always
  vanishes. For the pairing of a lift with a core section, use
  \eqref{pairing_core_lin} to obtain that
	\[
		\bigpair{\JJ\slift(\nu)}{\JJ(\tau^\uparrow)}=
		\bigpair{\slift(\nu)}{\tau^\uparrow}\,,
	\]
	if and only if
		$\pair{j(\nu)}{j_C(\tau)}=\pair{\nu}{\tau}$.	
 	Since $\nu$ and $\tau$ are arbitrary, this is equivalent 
 	to $(j_C)^t=j^{-1}$. 
	For the pairing of two lifts compute
	\[
		\bigpair{\JJ(\slift(\nu_1))}{\JJ(\slift(\nu_2))}
		=\ell_{\Phi(\nu_2)^t(j(\nu_1))}
		+\ell_{\Phi(\nu_1)^t(j(\nu_2))}\,,
	\]
	and on the other hand $\pair{\slift(\nu_1)}{\slift(\nu_2)}=0$. 
	Hence $\JJ$ is orthogonal if and only if additionally
	\[
		\Phi(\nu_2)^t(j(\nu_1))=-\Phi(\nu_1)^t(j(\nu_2))\,.\qedhere
	\]
\end{proof}

Define
$\Psi,\Psi^j,(j^*\Psi)\in
\Gamma\bigl((TM\oplus E^*)^*\otimes(TM\oplus E^*)^*\otimes E^*\bigr)$ by
\begin{align*}
	\Psi(\nu_1,\nu_2)&:=\Phi(\nu_1)^t(\nu_2)\,,\quad 
	\Psi^j(\nu_1,\nu_2)&:=\Phi(\nu_1)^t(j\nu_2)\,,\quad 
	(j^*\Psi)(\nu_1,\nu_2)&:=\Psi(j\nu_1,j\nu_2)\,.
\end{align*}
Then Lemma \ref{lem_J_squ} and Lemma 
\ref{lem_J_ort} can be combined as follows
\begin{prop}\label{prop_J_gacs}
	A morphism $\JJ$ as in \eqref{diag_J} is a generalised almost complex 
	structure on $E$, if and only if for every skew-symmetric Dorfman 
	connection $\Delta$ as in \eqref{eq_DMC}:
	\begin{enumerate}
		\item $j^2=-1$\,,
		\item $j=-(j_C)^t$\,,
		\item $\Psi$ is skew-symmetric, i.e.~$\Psi\in\Omega^2(TM\oplus E^*,E^*)$\,,
		\item $\Psi(\nu_1,\nu_2)=-j^*\Psi(\nu_1,\nu_2)$\,.
	\end{enumerate}
      \end{prop}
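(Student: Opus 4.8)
The plan is to recognise that, by Definition \ref{def_gacs}, $\JJ$ is a generalised almost complex structure precisely when it satisfies the first two of the three conditions listed after Remark \ref{lem_J_core_lin}, namely $\JJ^2=-\id_{TE\oplus T^*E}$ and orthogonality of $\JJ$. These two are characterised by Lemma \ref{lem_J_squ} and Lemma \ref{lem_J_ort} respectively, so I would simply combine the five conditions appearing there and rewrite those involving $\Phi$ in terms of $\Psi$, $\Psi^j$ and $j^*\Psi$. No new geometric input is required; the work lies entirely in the translation and a short formal manipulation exploiting $j^2=-1$.

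First I would dispose of the conditions on the side and core morphisms. Condition (1), $j^2=-1$, is Lemma \ref{lem_J_squ}(1) verbatim. Since $j^2=-1$ gives $j^{-1}=-j$, the orthogonality condition Lemma \ref{lem_J_ort}(1), $(j_C)^t=j^{-1}$, is equivalent to $(j_C)^t=-j$, which is exactly condition (2), $j=-(j_C)^t$. Moreover condition (2) forces $j_C=-j^t$, whence $j_C^2=(j^t)^2=(j^2)^t=-\id$, so Lemma \ref{lem_J_squ}(2) becomes automatic. Thus conditions (1)--(2) of the proposition are together equivalent to the conjunction of Lemma \ref{lem_J_squ}(1), Lemma \ref{lem_J_squ}(2) and Lemma \ref{lem_J_ort}(1).

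Working now under the standing assumptions $j^2=-1$ and $j_C=-j^t$, I would translate the two remaining conditions. Transposing the identity $\Phi(j\nu)=-j_C\circ\Phi(\nu)$ of Lemma \ref{lem_J_squ}(3) and using $(j_C)^t=-j$ yields $\Phi(j\nu)^t=\Phi(\nu)^t\circ j$, that is the $j$-symmetry $\Psi(j\nu_1,\nu_2)=\Psi(\nu_1,j\nu_2)$ for all $\nu_1,\nu_2$; meanwhile Lemma \ref{lem_J_ort}(2) is by definition the skew-symmetry of $\Psi^j$, i.e.\ $\Psi(\nu_2,j\nu_1)=-\Psi(\nu_1,j\nu_2)$. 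Finally I would check, using only $j^2=-1$, that these two identities are equivalent to conditions (4) and (3). Replacing $\nu_1$ by $j\nu_1$ in the $j$-symmetry gives $-\Psi(\nu_1,\nu_2)=\Psi(j\nu_1,j\nu_2)=(j^*\Psi)(\nu_1,\nu_2)$, which is condition (4); as $j$ is invertible this substitution is reversible, so the $j$-symmetry of $\Psi$ and condition (4) are equivalent. Granting this, I would use the $j$-symmetry to rewrite the skew-symmetry of $\Psi^j$ as $\Psi(\nu_1,j\nu_2)=-\Psi(j\nu_2,\nu_1)$ and then let $j\nu_2$ range over all sections, obtaining exactly the skew-symmetry of $\Psi$, condition (3); the converse runs backwards along the same chain. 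The only real obstacle here is bookkeeping: one must handle the identification $(E\oplus T^*M)^*\cong TM\oplus E^*$ via the pairing $\langle\cdot\,,\cdot\rangle$ correctly when transposing $\Phi(\nu)$, and track the signs produced by $j^2=-1$ at each substitution. Integrability, i.e.\ the vanishing of the Nijenhuis tensor, is deferred to the next section and plays no role here.
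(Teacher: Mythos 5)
Your proposal is correct and follows essentially the same route as the paper's proof: both reduce the statement to Lemmas \ref{lem_J_squ} and \ref{lem_J_ort}, rewrite the two $\Phi$-conditions as the $j$-symmetry $\Psi(j\nu_1,\nu_2)=\Psi(\nu_1,j\nu_2)$ and the skew-symmetry of $\Psi^j$, and then obtain (3) and (4) by substituting $j$-translates of the arguments and using $j^2=-1$. The only (welcome) difference is that you make explicit why $j_C^2=-\id$ is automatic from conditions (1)--(2), a point the paper's proof leaves implicit.
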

 	\begin{proof}
 		Under assumption of the properties $j^2=-1$ and $j_C^t=j^{-1}$
 		the condition on $\Phi$ given in Lemma \ref{lem_J_squ} can be reformulated
 		as $\Psi(j\nu_1,\nu_2)=\Psi(\nu_1,j\nu_2)$, whereas the condition
 		on $\Phi$ given in Lemma \ref{lem_J_ort} is then equivalent to 
 		$\Psi(\nu_2,j\nu_1)=-\Psi(\nu_1,j\nu_2)$, in both cases for all 
 		sections $\nu_1,\nu_2$ of $TM\oplus E^*$. Applying the former 
 		equation on the latter and then replacing $\nu_2$ by 
 		$j\nu_3$ shows skew-symmetry of $\Psi$, again using $j^2=-1$. 
 		The former equation is shown to be equivalent to $\Psi=-j^*\Psi$,
 		again by replacing $\nu_2$ with $j\nu_3$.
	 \end{proof}

\subsection{Adapted Dorfman connections} 

This section shows that given a linear generalised almost complex
structure $\JJ$ on $E\to M$, there is a Dorfman connection $\Delta$
which is \textbf{adapted to $\JJ$}, i.e.~such that $\slift$ satisfies
$\JJ(\slift(\nu))=\slift(j\nu)$ for all $\nu\in\Gamma(TM\oplus
E^*)$. Equivalently, the corresponding tensor $\Phi$ defined by Lemma
\ref{lem_J_lift} vanishes.  The choice of such an adapted Dorfman
connection vastly simplifies all the following computations in this
paper.

Recall from Section \ref{sec_Background}
that a change of skew-symmetric Dorfman connection (from $\Delta_1$ to
$\Delta_2$) is equivalent to a 2-form
$\Psi_{12}\in\Omega^2(TM\oplus E^*,E^*)$.
\begin{lemma}\label{lem_change_psi}
  Let $\JJ$ be a linear generalised almost complex structure over $E$
  and choose two skew-symmetric Dorfman connections $\Delta_1$ and $\Delta_2$
  as above with change of splitting $\Psi_{12}$. Then
	\begin{equation}\label{eq_change_psi}
		\Psi_2(\nu_1,\nu_2)
		=\Psi_1(\nu_1,\nu_2)
		-\Psi_{12}(\nu_1,j\nu_2)
		-\Psi_{12}(j\nu_1,\nu_2)\,,
	\end{equation}
	where $\Psi_1,\Psi_2$ are the 2-forms defined as in  Proposition 
	\ref{prop_J_gacs} by $\JJ$  and $\Delta_1$ and $\Delta_2$, respectively. 
\end{lemma}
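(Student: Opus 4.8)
The plan is to compute $\JJ\bigl(\sigma_2(\nu)\bigr)$ in two different ways and compare the resulting core-linear parts. On the one hand, applying Lemma~\ref{lem_J_lift} to the Dorfman connection $\Delta_2$ directly gives $\JJ(\sigma_2(\nu))=\sigma_2(j\nu)+\corelinear{\Phi_2(\nu)}$, where $\Phi_2$ is the tensor attached to $\Delta_2$. On the other hand, the change of splitting lets me write $\sigma_2(\nu)=\sigma_1(\nu)+\corelinear{\Phi_{12}(\nu)}$, so that by $C^\infty$-linearity of $\JJ$, together with Lemma~\ref{lem_J_lift} for $\Delta_1$ and the identity $\JJ(\corelinear{\varphi})=\corelinear{j_C\circ\varphi}$ from Remark~\ref{lem_J_core_lin}(2), I obtain $\JJ(\sigma_2(\nu))=\sigma_1(j\nu)+\corelinear{\Phi_1(\nu)}+\corelinear{j_C\circ\Phi_{12}(\nu)}$.

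Next I would rewrite the first expression purely in terms of $\sigma_1$, again via the change of splitting $\sigma_2(j\nu)=\sigma_1(j\nu)+\corelinear{\Phi_{12}(j\nu)}$. Equating the two expressions and cancelling the common linear section $\sigma_1(j\nu)$ leaves an identity between core-linear sections; since the assignment $\varphi\mapsto\corelinear{\varphi}$ is injective, this yields the pointwise relation
\[
  \Phi_2(\nu)=\Phi_1(\nu)-\Phi_{12}(j\nu)+j_C\circ\Phi_{12}(\nu)
\]
in $\Hom(E,E\oplus T^*M)$, for every $\nu\in\Gamma(TM\oplus E^*)$.

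It then remains to dualise, evaluate on a second section $\nu_2$, and translate back into the $\Psi$-notation using $\Psi_i(\nu_1,\nu_2)=\Phi_i(\nu_1)^t(\nu_2)$ and $\Psi_{12}(\nu_1,\nu_2)=\Phi_{12}(\nu_1)^t(\nu_2)$, under the canonical identification $(E\oplus T^*M)^*\cong TM\oplus E^*$. The terms $\Phi_1(\nu_1)^t(\nu_2)$ and $-\Phi_{12}(j\nu_1)^t(\nu_2)$ become $\Psi_1(\nu_1,\nu_2)$ and $-\Psi_{12}(j\nu_1,\nu_2)$ at once. The only point requiring care --- and the main, though mild, obstacle --- is the middle term: using $(j_C\circ\Phi_{12}(\nu_1))^t=\Phi_{12}(\nu_1)^t\circ j_C^t$ together with the orthogonality relation $j_C^t=-j$ from Proposition~\ref{prop_J_gacs}(2) (equivalently Lemma~\ref{lem_J_ort}(1)), this term equals $-\Phi_{12}(\nu_1)^t(j\nu_2)=-\Psi_{12}(\nu_1,j\nu_2)$. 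Assembling the three contributions gives exactly \eqref{eq_change_psi}.
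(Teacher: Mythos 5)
Your proposal is correct and follows essentially the same route as the paper's proof: both expand $\corelinear{\Phi_2(\nu)}=\JJ(\sigma_2(\nu))-\sigma_2(j\nu)$ through the change of splitting and Remark \ref{lem_J_core_lin}(2) to get $\Phi_2(\nu)=\Phi_1(\nu)+j_C\circ\Phi_{12}(\nu)-\Phi_{12}(j\nu)$, and then dualise using $j_C^t=-j$ from Proposition \ref{prop_J_gacs}. Your write-up merely makes explicit the steps the paper compresses into ``a computation yields,'' including the correct handling of the middle term via $(j_C\circ\Phi_{12}(\nu_1))^t=\Phi_{12}(\nu_1)^t\circ j_C^t$.
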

\begin{proof}
A computation yields
	\begin{equation*}
		\corelinear{\Phi_2(\nu)}
		=\JJ(\sigma_2(\nu))
		-\sigma_2(j\nu)\\
		=\corelinear{\Phi_1(\nu)}
		+\corelinear{j_C\circ\Phi_{12}(\nu)}
		-\corelinear{\Phi_{12}(j\nu)}\,.
	\end{equation*}
	Dualising this equality leads to 
	\begin{equation*}
		\Psi_2(\nu_1,\nu_2)
		=\Psi_1(\nu_1,\nu_2)
		-\Psi_{12}(\nu_1,j\nu_2)
		-\Psi_{12}(j\nu_1,\nu_2)\,,
	\end{equation*}
	using  $j_C^t=-j$, see Proposition \ref{prop_J_gacs}. 
\end{proof}

Now  \eqref{eq_change_psi}  is used to find the existence of a Dorfman connection adapted to $\JJ$.

\begin{prop}\label{prop_adapted_DMC}
  For every linear generalised complex structure $\JJ$ on $E$ there is a
  skew-symmetric $(TM\oplus E^*)$-Dorfman connection $\Delta$ on
  $E^*\oplus TM$ such that $\JJ(\slift(\nu))=\slift(j\nu)$ for all
  $\nu\in\Gamma(TM\oplus E^*)$.
\end{prop}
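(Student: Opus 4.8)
The plan is to start from an arbitrary skew-symmetric Dorfman connection and correct it by a suitable change of splitting so that the obstruction tensor $\Phi$ of Lemma \ref{lem_J_lift} is killed. Concretely, fix any skew-symmetric Dorfman connection $\Delta_1$ and let $\Psi_1\in\Omega^2(TM\oplus E^*,E^*)$ be the associated $2$-form from Proposition \ref{prop_J_gacs}; since $\JJ$ is a generalised almost complex structure, $\Psi_1$ is skew-symmetric and satisfies $\Psi_1=-j^*\Psi_1$, i.e.~$\Psi_1(\nu_1,\nu_2)=-\Psi_1(j\nu_1,j\nu_2)$. Because the condition $\JJ(\slift(\nu))=\slift(j\nu)$ is equivalent to the vanishing of $\Phi$, and the relation $\Psi(\nu_1,\nu_2)=\Phi(\nu_1)^t(\nu_2)$ determines $\Phi$ (the transpose being an isomorphism onto $\Hom((E\oplus T^*M)^*,E^*)$, so that $\Psi=0$ forces $\Phi=0$), it suffices to produce a Dorfman connection whose associated $2$-form vanishes. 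By Lemma \ref{lem_change_psi}, passing from $\Delta_1$ to a connection $\Delta_2$ via a change of splitting $\Psi_{12}\in\Omega^2(TM\oplus E^*,E^*)$ replaces $\Psi_1$ by $\Psi_2(\nu_1,\nu_2)=\Psi_1(\nu_1,\nu_2)-\Psi_{12}(\nu_1,j\nu_2)-\Psi_{12}(j\nu_1,\nu_2)$, so the task reduces to solving $\Psi_{12}(\nu_1,j\nu_2)+\Psi_{12}(j\nu_1,\nu_2)=\Psi_1(\nu_1,\nu_2)$ for $\Psi_{12}$.

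The key step is the explicit ansatz $\Psi_{12}(\nu_1,\nu_2):=-\half\,\Psi_1(\nu_1,j\nu_2)$. First I would check that this is genuinely a change of splitting, i.e.~that $\Psi_{12}$ is skew-symmetric: using $j^2=-\id$ together with the skew-symmetry of $\Psi_1$ and $\Psi_1=-j^*\Psi_1$, one computes $\Psi_1(\nu_2,j\nu_1)=-\Psi_1(j\nu_2,-\nu_1)=\Psi_1(j\nu_2,\nu_1)=-\Psi_1(\nu_1,j\nu_2)$, which is exactly the skew-symmetry of $\Psi_{12}$. Then I would substitute the ansatz into Lemma \ref{lem_change_psi}: since $\Psi_{12}(\nu_1,j\nu_2)=-\half\Psi_1(\nu_1,j^2\nu_2)=\half\Psi_1(\nu_1,\nu_2)$ and $\Psi_{12}(j\nu_1,\nu_2)=-\half\Psi_1(j\nu_1,j\nu_2)=\half\Psi_1(\nu_1,\nu_2)$ (the last equality by $\Psi_1=-j^*\Psi_1$), the two correction terms each contribute $\half\Psi_1$ and together cancel $\Psi_1$ exactly, giving $\Psi_2=0$.

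Finally, the change of splitting $\Psi_{12}$ determines a Dorfman connection $\Delta:=\Delta_2$, which is again skew-symmetric since $\Delta_1$ and $\Psi_{12}$ are (as recorded after Definition \ref{def_change_DMC}). Its associated tensor $\Phi$ then satisfies $\Phi(\nu)^t(\cdot)=\Psi_2(\nu,\cdot)=0$, hence $\Phi=0$, which is precisely the assertion $\JJ(\slift(\nu))=\slift(j\nu)$ for all $\nu\in\Gamma(TM\oplus E^*)$. I expect the only real subtlety to be the verification that the ansatz is skew-symmetric: this is where both defining properties of $\Psi_1$ (skew-symmetry and $j$-anti-invariance) are needed simultaneously, and pinning down the sign and the factor $-\half$ is exactly what makes the two correction terms combine to cancel $\Psi_1$ rather than reinforce it.
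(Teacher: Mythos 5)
Your proposal is correct and follows essentially the same route as the paper's own proof: both start from an arbitrary skew-symmetric Dorfman connection, invoke Proposition \ref{prop_J_gacs} and Lemma \ref{lem_change_psi}, and use the identical ansatz $\Psi_{12}(\nu_1,\nu_2)=-\half\Psi_1(\nu_1,j\nu_2)$ to make $\Psi_2$ vanish. The only difference is that you write out explicitly the skew-symmetry check and the cancellation computation that the paper leaves implicit, and both verifications are carried out correctly.
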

\begin{proof}
  Fix any skew-symmetric Dorfman connection $TM\oplus E^*$-Dorfman
  connection $\Delta_1$ on $E^*\oplus TM$ and denote the corresponding
  lift by $\sigma_1$. Proposition \ref{prop_J_gacs} defines a two-form
  $\Psi_1\in\Omega^2(TM\oplus E^*,E^*)$ such that
  $\JJ(\sigma_1(\nu))=\sigma_1(j\nu)+\corelinear{\Psi_1(\nu,\cdot)}$.
  Let now $\Psi_{12}(\nu_1,\nu_2):=-\half\Psi_1(\nu_1,j\nu_2)$.  By
  Proposition \ref{prop_J_gacs} this form is skew-symmetric and
  therefore the dull bracket defined by \eqref{eq_change_bra} is
  skew-symmetric again. Now according to \eqref{eq_change_psi} and
  using the properties from Proposition \ref{prop_J_gacs} the
  corresponding 2-form $\Psi_2$ vanishes. Hence the Dorfman connection
  Dorfman connection $\Delta:=\Delta_2$ satisfies $\Psi_2=0$. By
  definition of $\Psi_2$, this is equivalent to
  $\JJ\circ\slift=\slift\circ j$.
\end{proof}

\medskip

The remainder of this section characterises the set of Dorfman connections
adapted to $\JJ$.
\begin{mydef}\label{def_j_equiv}
  Let $E\to M$ be a smooth vector bundle and let
  $j\colon TM\oplus E^*\to TM\oplus E^*$ be an arbitrary vector bundle
  morphism.  Two skew-symmetric $(TM\oplus E^*)$-Dorfman connection
  $\Delta_1$ and $\Delta_2$ on $E^*\oplus TM$ are \textbf{$j$-equivalent},
  if their change of splittings $\Psi_{12}$ defined by
  \eqref{eq_change_bra} satisfies
	\[
		\Psi_{12}(\nu_1,\nu_2)=\Psi_{12}(j\nu_1,j\nu_2)
    \]
              for all
  $\nu_1,\nu_2\in\Gamma(TM\oplus E^*)$.
\end{mydef}

The following lemma shows that if a Dorfman connection
$\Delta_1$ is adapted to $\JJ$, then a second
Dorfman connection $\Delta_2$ is adapted to $\JJ$ if and only if they 
are $j$-equivalent. 
\begin{lemma}\label{lem_j_equiv}
	Let $\JJ$ be a linear generalised almost complex structure $\JJ$ 
	on a vector bundle $E\to M$ and $\Delta_1$ and $\Delta_2$ be two 
	skew-symmetric $(TM\oplus E^*)$-Dorfman connections on $E\oplus T^*M$. 
	Denote the two-forms given by Proposition \ref{prop_J_gacs} 
	corresponding to $\Delta_1$ and $\Delta_2$ by $\Psi_1$ and $\Psi_2$, 
	respectively. Then $\Delta_1$ and $\Delta_2$ are $j$-equivalent if 
	and only if $\Psi_1=\Psi_2$.
\end{lemma}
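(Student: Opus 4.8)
The plan is to reduce everything to the relation between $\Psi_1$ and $\Psi_2$ already supplied by Lemma \ref{lem_change_psi}, and then to recognise that the resulting identity is, after a single substitution, literally the $j$-equivalence condition of Definition \ref{def_j_equiv}. So the proof will be almost entirely formal once that lemma is invoked.

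First I would recall from Lemma \ref{lem_change_psi} that the two-forms $\Psi_1$ and $\Psi_2$ attached to $\JJ$ by the two Dorfman connections are related through the change of splitting $\Psi_{12}$ by
\[
\Psi_2(\nu_1,\nu_2)=\Psi_1(\nu_1,\nu_2)-\Psi_{12}(\nu_1,j\nu_2)-\Psi_{12}(j\nu_1,\nu_2)
\]
for all $\nu_1,\nu_2\in\Gamma(TM\oplus E^*)$. Reading this equation directly, the equality $\Psi_1=\Psi_2$ holds if and only if
\[
\Psi_{12}(\nu_1,j\nu_2)+\Psi_{12}(j\nu_1,\nu_2)=0
\]
for all $\nu_1,\nu_2\in\Gamma(TM\oplus E^*)$. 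Thus the lemma reduces to showing that this last identity is equivalent to the $j$-equivalence condition $\Psi_{12}(\nu_1,\nu_2)=\Psi_{12}(j\nu_1,j\nu_2)$.

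For the forward implication I would assume the vanishing identity and substitute $\nu_2\mapsto j\nu_2$; using $j^2=-\id_{TM\oplus E^*}$ from Proposition \ref{prop_J_gacs}, this yields $-\Psi_{12}(\nu_1,\nu_2)+\Psi_{12}(j\nu_1,j\nu_2)=0$, which is exactly $j$-equivalence. For the converse I would start from the $j$-equivalence condition and again substitute $\nu_2\mapsto j\nu_2$, obtaining $\Psi_{12}(\nu_1,j\nu_2)=-\Psi_{12}(j\nu_1,\nu_2)$, i.e.~the vanishing identity. Since both directions are produced by the same substitution and use only $j^2=-\id$, the two conditions are equivalent, and combining with the reduction above gives $\Psi_1=\Psi_2$ if and only if $\Delta_1$ and $\Delta_2$ are $j$-equivalent.

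I do not expect any genuine obstacle here: the statement is a purely algebraic equivalence of two bilinear identities, and the only structural input beyond Lemma \ref{lem_change_psi} is the relation $j^2=-\id_{TM\oplus E^*}$. The one point requiring a little care is bookkeeping — making sure the substitution $\nu_2\mapsto j\nu_2$ is applied to the correct slot and that the sign produced by $j^2=-\id$ is tracked — but no use of skew-symmetry of $\Psi_{12}$ or of the finer properties of $\JJ$ is needed.
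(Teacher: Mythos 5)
Your proposal is correct and follows essentially the same route as the paper's own proof: both reduce the statement to the relation of Lemma \ref{lem_change_psi}, so that $\Psi_1=\Psi_2$ becomes the vanishing of $\Psi_{12}(\nu_1,j\nu_2)+\Psi_{12}(j\nu_1,\nu_2)$, and both identify this with the $j$-equivalence condition via the substitution $\nu_2\mapsto j\nu_2$ together with $j^2=-\id_{TM\oplus E^*}$. Your observation that only $j^2=-\id$ (and not skew-symmetry of $\Psi_{12}$) is needed matches the paper exactly.
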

\begin{proof}
	Denote the change of splittings again by  $\Psi_{12}$.  
	 $\Delta_1$ is $j$-equivalent to $\Delta_2$ if and only if
	for all $\nu_1,\nu_2\in\Gamma(TM\oplus E^*)$
	\begin{equation}\label{eq_j_equiv}
	\Psi_{12}(\nu_1,j\nu_2)=-\Psi_{12}(j(j\nu_1),j\nu_2)=-\Psi_{12}(j\nu_1,\nu_2)\,.
	\end{equation}
	By \eqref{eq_change_psi}, $\Psi_1$ and $\Psi_2$
	are related by
	\[
	\Psi_2(\nu_1,\nu_2)
	=\Psi_1(\nu_1,\nu_2)
	-\Psi_{12}(\nu_1,j\nu_2)
	-\Psi_{12}(j\nu_1,\nu_2)\,.
	\]
	So $\Psi_1=\Psi_2$ if and only if \eqref{eq_j_equiv} holds, that
	is, if and only if $\Delta_1$ and $\Delta_2$ are $j$-equivalent. 	
      \end{proof}

\subsection{Integrability}\label{sec_lgcs_int}
Consider a linear generalised almost complex structure $\JJ$ 
on $E$ as in \eqref{diag_J} and fix a skew-symmetric Dorfman connection
$\Delta$ as in \eqref{eq_DMC}, which is adapted to $\JJ$. In particular 
$j$, $j_C$ satisfy the conditions of Proposition \ref{prop_J_gacs}, and
$\JJ\bigl(\slift(\nu)\bigr)=\slift(j\nu)$ for all 
$\nu\in\Gamma(TM\oplus E^*)$.
Evaluated at two core sections the Nijenhuis tensor of $\JJ$ vanishes
trivially, since the Courant-Dorfman bracket of two core sections
vanishes and the double vector bundle morphism $\JJ$ sends core
sections to core sections.

For the Nijenhuis tensor of $\JJ$ evaluated at a horizontal lift 
$\slift(\nu)$ for $\nu\in\Gamma(TM\oplus E^*)$ and a core section 
$\tau^\uparrow$ for $\tau\in\Gamma(E\oplus T^*M)$, compute 
\[
	N_\JJ\bigl(\slift(\nu),\tau^\uparrow\bigr)
	=\bigl(\Delta_\nu\tau\bigr)^\uparrow
	-\bigl(\Delta_{j(\nu)}j_C(\tau)\bigr)^\uparrow
	+\bigl(j_C(\Delta_{j(\nu)}\tau)\bigr)^\uparrow
	+\bigl(j_C(\Delta_\nu j_C(\tau))\bigr)^\uparrow\,.
\]
Thus the Nijenhuis tensor of $\JJ$ vanishes for any such pair 
of a lift $\slift(\nu)$ and a core section $\tau^\uparrow$ if 
and only if for all $\nu\in\Gamma(TM\oplus E^*)$ and 
$\tau\in\Gamma(E\oplus T^*M)$ 
\begin{equation}\label{eq_Nij_lin_core_dual}
	\Delta_\nu\tau
	-\Delta_{j(\nu)}j_C(\tau)
	+j_C\bigl(\Delta_{j(\nu)}\tau\bigr)
	+j_C\bigl(\Delta_\nu j_C(\tau)\bigr)=0\,.
\end{equation}

As the pairing is non-degenerate, this condition can be dualised by
pairing it with a second section $\nu_2\in\Gamma(TM\oplus
E^*)$. Recall that $\Delta$ is dual to a dull
bracket $\DMbra{\cdot}{\cdot}_{\Delta}$. Then the properties of $j$
and $j_C$ obtained in Proposition \ref{prop_J_gacs} lead to
\[
	\Bigl\langle\Delta_{\nu_1}\tau
	-\Delta_{j(\nu_1)}j_C(\tau)
	+j_C\bigl(\Delta_{j(\nu_1)}\tau\bigr)
	+j_C\bigl(\Delta_{\nu_1} j_C(\tau)\bigr),\nu_2\biggr\rangle
	=\Bigpair{\tau}{-N_{j,\DMbra{\cdot}{\cdot}_{\Delta}}(\nu_1,\nu_2)}\,.
\]

Thus the Nijenhuis tensor of a generalised almost complex structure 
$\JJ$ vanishes when evaluated at a pair of any lift $\slift(\nu)$ 
and any core section $\tau^\uparrow$ if and only if the Nijenhuis 
tensor of $j$ with respect to the dull bracket 
$\DMbra{\cdot}{\cdot}_\Delta$ vanishes. 

Finally, compute for
$\nu_1,\nu_2\in\Gamma(TM\oplus E^*)$:
\begingroup
\allowdisplaybreaks[1]
\begin{align*}
  N_\JJ\bigl(\slift(\nu_1),\slift(\nu_2)\bigr)
  &=\slift(N_{j,\DMbra{\cdot}{\cdot}_\Delta}(\nu_1,\nu_2))+\corelinear{R_\Delta(j(\nu_1),j(\nu_2))(\cdot,0)}
    -\corelinear{R_\Delta(\nu_1,\nu_2)(\cdot,0)}\\*
  &\quad-\corelinear{j_C\circ R_\Delta(j(\nu_1),\nu_2)(\cdot,0)}
    -\corelinear{j_C\circ R_\Delta(\nu_1,j(\nu_2))(\cdot,0)}\,.
\end{align*}

Recall that since the dull bracket on $TM\oplus E^*$ is anchored by
$\pr_{TM}$ and $\Delta_{(X,\eps)}(0,\theta)=(0,\LL_X\theta)$ the
curvature $R_{\Delta}(\nu_1,\nu_2)(0,\theta)$ for
$\theta\in\Gamma(T^*M)$ always vanishes. Therefore the terms with
$R_\Delta$ above evaluated at $(e,0)$ vanish if and only if the
corresponding terms vanish evaluated at $(e,\theta)$ for any
$\theta \in\Gamma(T^*M)$.  Thus the Nijenhuis tensor of $\JJ$ vanishes
for all sections if and only if the Nijenhuis tensor of $j$ with
respect to $\DMbra{\cdot}{\cdot}_\Delta$ vanishes and additionally the
curvature of the adapted $\Delta$ satisfies
\[
	0=R_\Delta\bigl(j(\nu_1),j(\nu_2)\bigr)(\tau)
	-R_\Delta(\nu_1,\nu_2)(\tau)
	-j_C\Bigl( R_\Delta(j(\nu_1),\nu_2)(\tau)\Bigr)
	-j_C\Bigl(R_\Delta(\nu_1,j(\nu_2))(\tau)\Bigr)\,,
\]
for all $\nu_1,\nu_2\in\Gamma(TM\oplus E^*)$ and 
$\tau\in\Gamma(E\oplus T^*M)$. By pairing the right hand side of 
this equation with a third section $\nu_3$ of $TM\oplus E$,
obtain equivalently
\begin{equation}\label{eq_AAA_Jacobi}
\begin{split}
	0&=\Jac_\Delta(j\nu_1,j\nu_2,\nu_3)
	+\Jac_\Delta(j\nu_1,\nu_2,j\nu_3)
	+\Jac_\Delta(\nu_1,j\nu_2,j\nu_3)
	-\Jac_\Delta(\nu_1,\nu_2,\nu_3)\\
	&=\DMbra{\nu_1}{\DMbra{\nu_2}{\nu_3}_\Delta}_\Delta
	-\DMbra{j\nu_1}{\DMbra{j\nu_2}{\nu_3}_\Delta}_\Delta
	-\DMbra{j\nu_1}{\DMbra{\nu_2}{j\nu_3}_\Delta}_\Delta
	-\DMbra{\nu_1}{\DMbra{j\nu_2}{j\nu_3}_\Delta}_\Delta\\
	&\quad+\text{cyclic permutations in 1,2,3}\\
	&=\DMbra{\nu_1}{\DMbra{\nu_2}{\nu_3}_\Delta
	-\DMbra{j\nu_2}{j\nu_3}_\Delta}_\Delta
	+\DMbra{j\nu_1}{-\DMbra{\nu_2}{j\nu_3}_\Delta
	-\DMbra{j\nu_2}{\nu_3}_\Delta}_\Delta\\
	&\quad+\text{cyclic permutations in 1,2,3}\,.
\end{split}
\end{equation}

Define a bracket $\AAA$ on $\Gamma(TM\oplus E^*)$ by 
\begin{equation}\label{eq_def_AAA}
	\AAA(\nu_1,\nu_2)
	:=\half\Bigl(\DMbra{\nu_1}{\nu_2}_\Delta
	-\DMbra{j\nu_1}{j\nu_2}_\Delta\Bigr)\,.
\end{equation}
Then $N_{j,\DMbra{\cdot}{\cdot}_\Delta}$ vanishes if and only if 
$\AAA$ satisfies
\begin{equation}\label{AClinear}
	\AAA(\nu_1,j\nu_2)=j\AAA(\nu_1,\nu_2)\,.
\end{equation}
Furthermore, \eqref{eq_AAA_Jacobi} is 
equivalent to the Jacobi identity of this bracket $\AAA$.

Note that the bracket $\AAA$ does not admit a $TM$-valued anchor on 
$TM\oplus E^*$, and is thus not a (real) Lie algebroid bracket on 
$TM\oplus E^*$, since
\[
	\AAA(\nu_1,f\nu_2)
	=f\AAA(\nu_1,\nu_2)
	-\half\pr_{TM}(j\nu_1)(f)j\nu_2
	+\half\pr_{TM}(\nu_1)(f)\nu_2 
      \]
      for $f\in C^\infty(M)$ and $\nu_1,\nu_2\in\Gamma(TM\oplus E^*)$.
      However, the map $j$ is a fibre-wise complex structure in 
      $TM\oplus E^*$, with respect to which $\AAA$ is $\C$-bilinear 
      by \eqref{AClinear} and skew-symmetric. And the equation 
      above shows that with the complex anchor 
      $\rho\colon TM\oplus E^*\to T_\C M$ defined by
\begin{equation}\label{eq_anchor_cplx_LA}
	\rho(\nu):=\pr_{T_\C M}\left(\half(\nu\otimes 1-(j\nu)\otimes i)\right)\,,
\end{equation}
$(TM\oplus E^*,\rho,\AAA)$ is a complex Lie algebroid as defined 
in \cite{Weinstein07}. Furthermore, it is easy to check that 
$\AAA$ is independent of the choice of adapted Dorfman connection 
$\Delta$.
\begin{thm}\label{thm_lgcs_DMC}
        Let $E\to M$ be a smooth vector bundle.
        A linear generalised complex structure on $E$ is equivalent to
        \begin{enumerate}
        \item A vector bundle morphism
          $j\colon TM\oplus E^*\to TM\oplus E^*$ such that
          $j^2=-\id_{TM\oplus E^*}$, i.e.~a complex structure in the fibers of $TM\oplus E^*$, and
        \item a choice of $j$-equivalence class of Dorfman
          $TM\oplus E^*$-connections on $E\oplus T^*M$ such that
          $TM\oplus E^*\to M$, equipped with the bracket $\AAA$
          defined in \eqref{eq_def_AAA} by the corresponding dull
          brackets and the anchor $\rho$ in \eqref{eq_anchor_cplx_LA}
          becomes a complex Lie algebroid.
          \end{enumerate}
        \end{thm}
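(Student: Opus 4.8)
The plan is to establish the stated equivalence as a bijective correspondence between linear generalised complex structures $\JJ$ on $E$ and pairs consisting of a fibrewise complex structure $j$ on $TM\oplus E^*$ together with a $j$-equivalence class of skew-symmetric Dorfman connections making $(TM\oplus E^*,\rho,\AAA)$ a complex Lie algebroid. Almost all the technical content is already in place: Proposition \ref{prop_J_gacs} characterises the generalised \emph{almost} complex structures, Proposition \ref{prop_adapted_DMC} produces adapted Dorfman connections, and the integrability computations of Section \ref{sec_lgcs_int} reduce the vanishing of $N_\JJ$ to two identities for $\AAA$. What remains is to package these into the two directions of the equivalence and to check that each construction is independent of the choices made.

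For the direction from a generalised complex structure to the data, I would take $j$ to be the side morphism of $\JJ$; by Proposition \ref{prop_J_gacs} it satisfies $j^2=-\id$, giving (1). By Proposition \ref{prop_adapted_DMC} there is a skew-symmetric Dorfman connection $\Delta$ adapted to $\JJ$, and its dull bracket defines $\AAA$ via \eqref{eq_def_AAA}. The analysis of Section \ref{sec_lgcs_int} then shows that $N_\JJ=0$ is equivalent to $N_{j,\DMbra{\cdot}{\cdot}_\Delta}=0$ together with \eqref{eq_AAA_Jacobi}; by \eqref{AClinear} the first is the $\C$-bilinearity $\AAA(\nu_1,j\nu_2)=j\AAA(\nu_1,\nu_2)$, and the second is the Jacobi identity for $\AAA$. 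Together with the skew-symmetry of $\AAA$ and the complex anchor $\rho$ of \eqref{eq_anchor_cplx_LA}, this is exactly the assertion that $(TM\oplus E^*,\rho,\AAA)$ is a complex Lie algebroid, giving (2). By Lemma \ref{lem_j_equiv} the adapted Dorfman connections form a single $j$-equivalence class, so the class in (2) is well-defined.

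For the reverse direction, I would choose a representative $\Delta$ of the $j$-equivalence class, set $j_C:=-j^t$, and define $\JJ$ on a spanning set by $\JJ(\slift(\nu)):=\slift(j\nu)$ and $\JJ(\tau^\uparrow):=j_C(\tau)^\uparrow$, extending it to a double vector bundle morphism with side $j$ and core $j_C$. Then $\Phi=0$ and hence $\Psi=0$, so Proposition \ref{prop_J_gacs} applied to this $\Delta$ shows at once that $\JJ$ is a generalised almost complex structure, its conditions reducing to $j^2=-\id$, $j=-(j_C)^t$ and the trivial vanishing of $\Psi$. Since $\Delta$ is adapted by construction, the computations of Section \ref{sec_lgcs_int} apply verbatim: $N_\JJ=0$ is equivalent to the $\C$-bilinearity \eqref{AClinear} and the Jacobi identity \eqref{eq_AAA_Jacobi} of $\AAA$, both of which hold by hypothesis, so $\JJ$ is a linear generalised complex structure.

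The remaining, and only delicate, point is well-definedness, i.e.\ that the two constructions are mutually inverse and independent of the chosen representative. For the data-to-$\JJ$ direction, if $\Delta_1,\Delta_2$ are $j$-equivalent representatives with change of splitting $\Psi_{12}$, then the identity \eqref{eq_j_equiv} from the proof of Lemma \ref{lem_j_equiv}, namely $\Psi_{12}(\nu_1,j\nu_2)=-\Psi_{12}(j\nu_1,\nu_2)$, dualises into $\Phi_{12}(j\nu)=j_C\circ\Phi_{12}(\nu)$; together with Remark \ref{lem_J_core_lin} this forces the two morphisms to agree on lifts and cores, hence to coincide. For the $\JJ$-to-data direction, \eqref{eq_change_bra} shows that $\AAA$ changes by $\half\bigl(0,\Psi_{12}(\nu_1,\nu_2)-\Psi_{12}(j\nu_1,j\nu_2)\bigr)$ under a change of representative, which vanishes precisely by $j$-equivalence; this is the independence of $\AAA$ already noted before the theorem. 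The two assignments are then inverse to each other, since from $\JJ$ one recovers an adapted $\Delta$ whose associated morphism (by Remark \ref{lem_J_core_lin}) is again $\JJ$, and from the data one recovers the same class and the same $\AAA$. I expect this verification — pinning down why $j$-equivalence is the correct notion in \emph{both} directions — to be the main obstacle, everything else being a direct appeal to the preceding results.
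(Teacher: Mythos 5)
Your proposal is correct and follows essentially the same route as the paper's proof: extract $j$ and an adapted Dorfman connection via Propositions \ref{prop_J_gacs} and \ref{prop_adapted_DMC}, use the Nijenhuis computations of Section \ref{sec_lgcs_int} in both directions, and define $\JJ$ on lifts and core sections by $\slift(\nu)\mapsto\slift(j\nu)$, $\tau^\uparrow\mapsto(-j^t\tau)^\uparrow$ for the converse. The only difference is that you spell out the mutual-inverse and representative-independence verifications (via \eqref{eq_j_equiv}, \eqref{eq_change_bra} and Remark \ref{lem_J_core_lin}), which the paper asserts in a single sentence, so your write-up is if anything more complete.
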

        
\begin{proof}
  Given a linear generalised almost complex structure $\JJ$ on 
  $E$ as in \eqref{diag_J}, Proposition \ref{prop_J_gacs} and 
  Proposition \ref{prop_adapted_DMC} define the vector bundle 
  morphism $j$ and an adapted Dorfman connection $\Delta$, 
  in turn uniquely defining the bracket $\AAA$ on 
  $\Gamma(TM\oplus E^*)$ by \eqref{eq_def_AAA}.
  By the arguments above, integrability of $\JJ$ implies that
  $\AAA$ is a complex Lie algebroid bracket with anchor given
  by \eqref{eq_anchor_cplx_LA}. 
	
	Conversely, given $j$ and a complex Lie algebroid structure
	where the bracket $\AAA$ comes from a Dorfman connection 
	$\Delta$ as above, define a double vector bundle morphism 
	$\JJ\colon TE\oplus T^*E\to TE\oplus T^*E$ by  
	\begin{align*}
		\JJ(\tau^\uparrow)&:=(-j^t(\tau))^\uparrow\,,\qquad
		\JJ(\slift(\nu)):=\slift(j(\nu))\,
	\end{align*}
        for $\tau\in\Gamma(E\oplus T^*M)$ and
        $\nu\in\Gamma(TM\oplus E^*)$.  Again by Proposition
        \ref{prop_J_gacs} and the computations above of the Nijenhuis
        tensor of $\JJ$ on linear and core sections, this defines a
        linear generalised complex structure on $E$.

        These two
        constructions are inverse to each other since neither $\JJ$
        nor $\AAA$ depend on the choice of the Dorfman connection in
        the $j$-equivalence class of $\Delta$. 
      \end{proof}

\subsection{Generalised K{\"a}hler structures on vector bundles}
Generalised K{\"a}hler structures were introduced in
\cite{Gualtieri04,Gualtieri14}.  
Any automorphism $G$ of $TM\oplus T^*M$ which is symmetric $G^t=G$,
and squares to $\id$ defines a symmetric metric on $TM\oplus
T^*M$. Such an automorphism is therefore called a \emph{metric}. 
\begin{mydef}
	A generalised K{\"a}hler structure on a manifold is a pair of 
	commuting generalised complex structures $\JJ_1$ and $\JJ_2$ 
	such that the symmetric non-degenerate metric $G:=-\JJ_1\circ\JJ_2$ 
	is positive definite. 
      \end{mydef}
      This section shows that in the case of a generalised K{\"a}hler
      structure on a vector bundle $E$ there exists a Dorfman
      connection which is adapted in the sense of Proposition
      \ref{prop_adapted_DMC} to both generalised complex structures
      simultaneously.  Take a vector bundle $E\to M$ equipped with a
      \emph{linear} generalised K{\"a}hler structure, i.e.~$\JJ_1$ and
      $\JJ_2$ are both linear. Denote the side morphisms on
      $TM\oplus E^*$ by $j_1$ and $j_2$, respectively. Take now any
      skew-symmetric $TM\oplus E^*$-Dorfman connection $\Delta$ on
      $E\oplus T^*M$. This gives rise to the corresponding $2$-forms
      $\Psi_1$ and $\Psi_2$ as in Proposition \ref{prop_J_gacs}.
\begin{lemma}\label{lem_kahler_psi}
	In the setting above, for all $\nu_1,\nu_2\in TM\oplus E^*$
	\[
	\Psi_2(j_1\nu_1,\nu_2)+\Psi_2(\nu_1,j_1\nu_2)
	=\Psi_1(j_2\nu_1,\nu_2)+\Psi_1(\nu_1,j_2\nu_2)\,.
	\]
\end{lemma}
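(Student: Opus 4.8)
The plan is to obtain the identity by evaluating the commutativity relation $\JJ_1\circ\JJ_2=\JJ_2\circ\JJ_1$ on horizontal lifts of the fixed Dorfman connection $\Delta$ and then dualising; no integrability is needed, only that both $\JJ_a$ are orthogonal double vector bundle morphisms that commute. For $a\in\{1,2\}$, Lemma \ref{lem_J_lift} produces a tensor $\Phi_a$ with $\JJ_a(\slift(\nu))=\slift(j_a\nu)+\corelinear{\Phi_a(\nu)}$, and by definition $\Psi_a(\nu_1,\nu_2)=\Phi_a(\nu_1)^t(\nu_2)$; I denote the core morphisms of $\JJ_1,\JJ_2$ by $(j_1)_C,(j_2)_C$.

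First I would compute $\JJ_1\JJ_2(\slift(\nu))$ and $\JJ_2\JJ_1(\slift(\nu))$. Combining the lift formula above with Remark \ref{lem_J_core_lin}(2), which gives $\JJ_a(\corelinear{\varphi})=\corelinear{(j_a)_C\circ\varphi}$, yields
\[
\JJ_1\JJ_2(\slift(\nu))=\slift(j_1j_2\nu)+\corelinear{\Phi_1(j_2\nu)}+\corelinear{(j_1)_C\circ\Phi_2(\nu)},
\]
together with the same expression carrying $1\leftrightarrow 2$. Equating the two, the $\slift$-components recover the commutativity $j_1j_2=j_2j_1$ of the side morphisms, while the core-linear components give the tensorial identity
\[
\Phi_1(j_2\nu)+(j_1)_C\circ\Phi_2(\nu)=\Phi_2(j_1\nu)+(j_2)_C\circ\Phi_1(\nu).
\]

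It then remains to dualise this identity. I would apply both sides to an arbitrary $e\in E$ and pair against a section $\nu_2\in\Gamma(TM\oplus E^*)$, using the canonical pairing between $TM\oplus E^*$ and $E\oplus T^*M$. Each term $\langle\nu_2,\Phi_a(\mu)(e)\rangle$ becomes $\langle\Psi_a(\mu,\nu_2),e\rangle$ by the definition of the transpose built into $\Psi_a$, while the terms with the core morphisms are rewritten with $\langle\nu_2,(j_a)_C\tau\rangle=-\langle j_a\nu_2,\tau\rangle$, which is exactly the relation $(j_a)_C^t=-j_a$ of Proposition \ref{prop_J_gacs}. This turns $\langle\nu_2,(j_1)_C\Phi_2(\nu)(e)\rangle$ into $-\langle\Psi_2(\nu,j_1\nu_2),e\rangle$ and its partner into $-\langle\Psi_1(\nu,j_2\nu_2),e\rangle$. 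Since $e$ is arbitrary and the pairing is non-degenerate, the identity collapses to
\[
\Psi_1(j_2\nu,\nu_2)-\Psi_2(\nu,j_1\nu_2)=\Psi_2(j_1\nu,\nu_2)-\Psi_1(\nu,j_2\nu_2),
\]
which, after renaming $\nu=\nu_1$ and moving the $\Psi_1$- and $\Psi_2$-terms to opposite sides, is precisely the asserted equality.

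The computation is essentially bookkeeping; the one place to be careful is the simultaneous use of two transposition conventions — the implicit $\Phi_a(\nu)^t$ inside $\Psi_a$, and the core/side relation $(j_a)_C=-j_a^t$ — and the signs they produce, so that the four terms assemble with the correct placement of $j_1$ versus $j_2$. I expect this sign-tracking to be the only subtle point; the structural input is just the commutativity of $\JJ_1$ and $\JJ_2$ read off on linear sections.
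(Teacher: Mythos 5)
Your proposal is correct and follows essentially the same route as the paper: evaluate the commutativity $\JJ_1\JJ_2=\JJ_2\JJ_1$ on horizontal lifts, read off the core-linear components to get the tensor identity (yours, written with $(j_a)_C$, agrees with the paper's version written with $j_a^t$ via $(j_a)_C=-j_a^t$ from Proposition \ref{prop_J_gacs}), and then dualise against a second section. The sign bookkeeping in your dualisation step is accurate, so nothing further is needed.
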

\begin{proof}
  Since $\JJ_1$ and $\JJ_2$ commute, so do their side morphisms $j_1$
  and $j_2$. Thus $\sigma^\Delta(j_1j_2\nu)=\sigma^\Delta(j_2j_1\nu)$
  and with the definition of $\Phi_1$ and $\Phi_2$ in Lemma
  \ref{lem_J_lift} this leads by straightforward computation to the
  equality
  $j_1^t\circ\Phi_2(\nu)-\Phi_1(j_2\nu)=j_2^t\circ\Phi_1(\nu)-\Phi_2(j_1\nu)$.
  Pairing this with a second arbitrary section
  $\nu_2\in\Gamma(TM\oplus E^*)$ and dualising then gives the desired
  equality. 
\end{proof}
This lemma easily yields the existence of a Dorfman connection adapted
to both generalised complex structures.
\begin{prop}\label{prop_kahler_adapted}
	Given two commuting linear generalised complex structures $\JJ_1$ and 
	$\JJ_2$ on a vector bundle $E\to M$, there is a $TM\oplus E^*$-Dorfman 
	connection $\Delta$	on $E\oplus T^*M$ which is adapted to both $\JJ_1$
	and $\JJ_2$.
\end{prop}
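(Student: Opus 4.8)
The plan is to adjust a single skew-symmetric Dorfman connection in two stages: first arrange adaptedness to $\JJ_1$, and then perform a further change of splitting that adapts the connection to $\JJ_2$ as well, while ensuring that this second change does not destroy the adaptedness to $\JJ_1$. The crucial structural observation is that, by Lemma \ref{lem_j_equiv}, a change of splitting leaves the two-form of $\JJ_1$ unchanged precisely when it is $j_1$-equivalent in the sense of Definition \ref{def_j_equiv}, so the whole argument reduces to checking that the change of splitting needed to kill the $\JJ_2$ two-form happens to be $j_1$-equivalent.

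Concretely, I would first invoke Proposition \ref{prop_adapted_DMC} to obtain a skew-symmetric $TM\oplus E^*$-Dorfman connection $\Delta$ on $E\oplus T^*M$ that is adapted to $\JJ_1$; with respect to $\Delta$, the two-form $\Psi_1$ of Proposition \ref{prop_J_gacs} attached to $\JJ_1$ vanishes. Let $\Psi_2\in\Omega^2(TM\oplus E^*,E^*)$ be the corresponding two-form attached to $\JJ_2$. Applying Lemma \ref{lem_kahler_psi} with this $\Delta$ and using $\Psi_1=0$ gives $\Psi_2(j_1\nu_1,\nu_2)+\Psi_2(\nu_1,j_1\nu_2)=0$; substituting $\nu_2\mapsto j_1\nu_2$ and using $j_1^2=-\id$ then yields the $j_1$-invariance $\Psi_2(j_1\nu_1,j_1\nu_2)=\Psi_2(\nu_1,\nu_2)$. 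Next, mimicking the proof of Proposition \ref{prop_adapted_DMC}, I would define the change of splitting $\Psi_{12}(\nu_1,\nu_2):=-\half\Psi_2(\nu_1,j_2\nu_2)$. By the properties of $\Psi_2$ recorded in Proposition \ref{prop_J_gacs} this form is skew-symmetric, hence defines a skew-symmetric Dorfman connection $\Delta'$ through \eqref{eq_change_bra}, and the same computation as in Proposition \ref{prop_adapted_DMC} (using $\Psi_2=-j_2^*\Psi_2$ together with \eqref{eq_change_psi}) shows that the $\JJ_2$ two-form for $\Delta'$ vanishes, i.e.\ $\Delta'$ is adapted to $\JJ_2$.

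It remains to verify that $\Delta'$ is still adapted to $\JJ_1$, which is the main point of the argument and the step I expect to be the real obstacle. Here I would check that $\Psi_{12}$ is $j_1$-equivalent, i.e.\ $\Psi_{12}(j_1\nu_1,j_1\nu_2)=\Psi_{12}(\nu_1,\nu_2)$: using the commutativity $j_1 j_2=j_2 j_1$ of the side morphisms and then the $j_1$-invariance of $\Psi_2$ established above, one computes $\Psi_{12}(j_1\nu_1,j_1\nu_2)=-\half\Psi_2(j_1\nu_1,j_1 j_2\nu_2)=-\half\Psi_2(\nu_1,j_2\nu_2)=\Psi_{12}(\nu_1,\nu_2)$. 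By Lemma \ref{lem_j_equiv} the two-form of $\JJ_1$ is therefore the same for $\Delta'$ as for $\Delta$, namely zero, so $\Delta'$ remains adapted to $\JJ_1$, and hence is adapted to both structures. The only genuinely new ingredient beyond the single-structure case is the compatibility furnished by Lemma \ref{lem_kahler_psi}: it is exactly what guarantees that the $\JJ_2$-adapting change of splitting is $j_1$-equivalent, so that the two adaptedness conditions do not conflict. Everything else is a direct transcription of the construction in Proposition \ref{prop_adapted_DMC}.
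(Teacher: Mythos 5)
Your proof is correct and follows essentially the same route as the paper's: start from a connection adapted to $\JJ_1$ via Proposition \ref{prop_adapted_DMC}, use Lemma \ref{lem_kahler_psi} (with $\Psi_1=0$) to deduce the $j_1$-invariance of $\Psi_2$, perform the change of splitting $\Psi_{12}=-\half\Psi_2(\cdot\,,j_2\cdot)$ to adapt to $\JJ_2$, and check via commutativity of $j_1,j_2$ and Lemma \ref{lem_j_equiv} that this change is $j_1$-equivalent, hence preserves adaptedness to $\JJ_1$. Your write-up is in fact slightly more explicit than the paper's, since you spell out that the $j_1$-equivalence of $\Psi_{12}$ uses both the commutativity and the previously established $j_1$-invariance of $\Psi_2$.
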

\begin{proof}
	Take a skew-symmetric Dorfman connection $\Delta_1$, which is adapted
	to $\JJ_1$ as constructed in Proposition \ref{prop_adapted_DMC}. Thus 
	the 2-form $\Psi_1$ vanishes. The previous Lemma \ref{lem_kahler_psi}
	then shows that $\Psi_2(\nu_1,\nu_2)=\Psi_2(j_1\nu_1,j\nu_2)$ for all
	$\nu_1,\nu_2\in\Gamma(TM\oplus E^*)$. In order to obtain a 
	Dorfman connection $\Delta_2$ adapted to $\JJ_2$, use as 
	change of splitting the form $\Psi_{12}:=-\half\Psi_2(\cdot,j_2\cdot)$ 
	as shown in the proof of Proposition \ref{prop_adapted_DMC}. But since
	$j_1$ and $j_2$ commute, also this change of 
	splittings satisfies $\Psi_{12}(j_1\nu_1,j_1\nu_2)=\Psi_{12}(\nu_1,\nu_2)$ 
	for all $\nu_1,\nu_2\in TM\oplus E^*$. Thus $\Delta_2$ is $j_1$-equivalent 
	to $\Delta_1$ as in Definition \ref{def_j_equiv}. Now Lemma	\ref{lem_j_equiv}
	shows that $\Delta_2$ is still adapted to $\JJ_1$ and therefore to both
	generalised complex structures simultaneously.
\end{proof}

\subsection{Complex VB-Dirac structures}
\label{sec_cplx_VB_Dirac}

This section shows that a linear generalised complex structure on a
vector bundle $E$ is equivalent to a pair of complex conjugated
VB-Dirac structures.  Consider the complexification of $\TT E$ as a
vector bundle over $E$. This is again a double vector bundle
$\TT_\C E\cong T_\C E\oplus T_\C ^*E$ with complexified core and side
bundle.
\[{\footnotesize
	\begin{tikzcd}
	T_\C E\oplus T_\C ^*E \ar[rr] \ar[dd,"(\pi_{TM\oplus E^*})_\C"'] & & E \ar[dd]\\
	 & E_\C\oplus T_\C^*M \ar[rd] & \\
	T_\C M\oplus E_\C^* \ar[rr] & & M
	\end{tikzcd} 
	\,.}
      \]

It is straightforward to extend the results of \cite{Jotz18a} 
complex linearly to characterize linear splittings of the double
vector bundle $\TT_\C E$ which are additionally $\C$-linear over
$E$, giving a correspondence between such splittings and complex
$(T_\C M\oplus E^*_\C)$-Dorfman connections on $E_\C\oplus T^*_\C M$. 
      
Furthermore, the splitting corresponding to the complexification 
$\Delta^\C$ of a real Dorfman connection $\Delta$ is the 
complexification of the splitting $\Sigma^\Delta$, i.e.~the
complex linear extension in $TM\oplus E^*$. 

Let $\JJ\colon\TT E\rightarrow \TT E$ be a linear generalised complex
structure on $E$ over $j\colon TM\oplus E^*\rightarrow TM\oplus E^*$
and with core morphism $j_C$ as in Definition \ref{def_lin_gcs}.  The
$\pm i$-eigenbundles of the complexification
$\JJ_\C\colon \TT_\C E\rightarrow \TT_\C E$ of $\JJ$ build a pair of
complex conjugated complex Dirac structures $D_{\pm}\subset \TT_\C E$:
\[
	D_{+}=\{\xi-i\JJ(\xi)|\,\xi\in\TT E\}\qquad
	D_{-}=\{\xi+i\JJ(\xi)|\,\xi\in\TT E\} \,.
\]

It is easy to see that these are two sub-double vector bundles 
of $\TT_\C E$
\[
	\begin{tikzcd}
	D_{\pm} \ar[r] \ar[d,"(\pi_{TM\oplus E^*})_\C|_{D_{\pm}}"'] & E \ar[d,"q_E"]\\
	U_{\pm} \ar[r] & M
	\end{tikzcd} 
	\,,
\]
where $U_{\pm}$ is the $\pm i$-eigenbundle of the complexification 
$j_\C\colon(TM\oplus E^*)_\C\to (TM\oplus E^*)_\C$ of $j$. The core
of this double vector bundle is $K_\pm\subseteq (E\oplus T^*M)_\C$, 
the $\pm i$-eigenbundle of the complexification $j_{C,\C}$ of $j_C$.
\begin{lemma}\label{lem_ann_U}
	In the situation above,
	\[
		U_{\pm}^\circ=K_{\pm}\,,
	\]
	where $U_{\pm}^\circ$ denotes the annihilator of $U_\pm$ in 
	$(TM\oplus E^*)^*\cong E\oplus T^*M$. 
\end{lemma}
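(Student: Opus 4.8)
The plan is to reduce the statement to the relation $j_C=-j^t$ between the core and side morphisms, established in Proposition~\ref{prop_J_gacs}, and then to carry out a fibrewise linear-algebra argument relating annihilators of eigenspaces of $j_\C$ to eigenspaces of the negative transpose $-(j_\C)^t$. First I would recall that Proposition~\ref{prop_J_gacs}(2) gives $j=-(j_C)^t$, hence, dualising, $j_C=-j^t$ as endomorphisms of $E\oplus T^*M\cong(TM\oplus E^*)^*$. Complexifying yields $j_{C,\C}=-(j_\C)^t$, where the transpose is taken with respect to the (nondegenerate) pairing between $(TM\oplus E^*)_\C$ and $(E\oplus T^*M)_\C$ under which the two bundles are dual.

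Next I would prove the inclusion $K_\pm\subseteq U_\pm^\circ$ by a direct eigenvalue computation. Taking $\nu\in U_\pm$, so $j_\C\nu=\pm i\,\nu$, and $\tau\in K_\pm$, so $j_{C,\C}\tau=\pm i\,\tau$, the defining property of the transpose together with $j_{C,\C}=-(j_\C)^t$ gives
\[
\pm i\,\langle\nu,\tau\rangle=\langle j_\C\nu,\tau\rangle=\langle\nu,(j_\C)^t\tau\rangle=-\langle\nu,j_{C,\C}\tau\rangle=\mp i\,\langle\nu,\tau\rangle .
\]
This forces $\langle\nu,\tau\rangle=0$, so every $\tau\in K_\pm$ annihilates $U_\pm$, i.e.\ $K_\pm\subseteq U_\pm^\circ$.

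For the reverse inclusion I would argue directly. Given $\tau\in U_\pm^\circ$, decompose an arbitrary section $\nu=\nu_++\nu_-$ along the eigenbundle splitting $(TM\oplus E^*)_\C=U_+\oplus U_-$. Using $\langle\nu_\pm,\tau\rangle=0$ (since $\tau\in U_\pm^\circ$) one computes $\langle j_\C\nu,\tau\rangle=\mp i\,\langle\nu,\tau\rangle$, and hence, again via $j_{C,\C}=-(j_\C)^t$,
\[
\langle\nu,(j_{C,\C}\mp i)\tau\rangle=-\langle j_\C\nu,\tau\rangle-(\pm i)\langle\nu,\tau\rangle=0
\]
for all $\nu$. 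Nondegeneracy of the pairing then yields $(j_{C,\C}\mp i)\tau=0$, that is $\tau\in K_\pm$, so $U_\pm^\circ\subseteq K_\pm$. (Alternatively, since $U_+$ and $U_-$ are complex conjugate and each of complex rank half the rank of $TM\oplus E^*$, the subbundle $U_\pm^\circ$ has the complementary complex rank, which agrees with that of $K_\pm$; combined with the first inclusion this gives equality.)

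Since everything takes place fibrewise and is linear, there is no genuine obstacle here. The only point requiring care is the sign bookkeeping: one must keep the minus sign in $j_C=-j^t$ straight throughout, so that the $+i$-eigenbundle $U_+$ of $j_\C$ is annihilated by the $+i$-eigenbundle $K_+$ of $j_{C,\C}$ (rather than by $K_-$), matching the claimed identity $U_\pm^\circ=K_\pm$ with coherent signs.
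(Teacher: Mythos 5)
Your proof is correct and takes essentially the same approach as the paper: the paper's entire proof is the one-line remark that the statement follows directly from $j_C=-j^t$ (Proposition \ref{prop_J_gacs}), and your fibrewise eigenvalue computation (both inclusions, with the signs handled correctly) simply spells out the linear algebra that this remark leaves to the reader.
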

\begin{proof}
	This follows directly from the property $j^t=-j_C$ of 
	Proposition \ref{prop_J_gacs}.
\end{proof}

Now consider the complex linear extension $\AAA_\C$ of $\AAA$ defined in
\eqref{eq_def_AAA}. 
\begin{prop}\label{prop_AAA_U_lie}
  The restriction to $U_\pm$ of the complexified bracket $\AAA_\C$ 
	coincides with the restriction of the dull bracket 
	$\DMbra{\cdot}{\cdot}_{\Delta^\C}$ corresponding to the
	complexification of an adapted Dorfman connection $\Delta$
	and defines a $\C$-Lie algebroid structure on $U_\pm$ with 
	anchor $\pr_{T_\C M}|_{U_\pm}$.
	
	It isomorphic as complex Lie algebroid to 
	$(TM\oplus E^*,\rho,\AAA)$ of Theorem \ref{thm_lgcs_DMC} via
	the canonical isomorphism $\nu\mapsto \frac{\nu\mp i j \nu}{2}$. 
\end{prop}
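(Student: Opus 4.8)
The plan is to compare all three brackets through the canonical map $\theta_\pm\colon TM\oplus E^*\to U_\pm$, $\theta_\pm(\nu)=\half(\nu\mp ij\nu)$, and to show that each of them reduces to $\AAA$ once transported along $\theta_\pm$. First I would collect the elementary properties of $\theta_\pm$: a one-line check gives $j_\C\theta_\pm(\nu)=\pm i\,\theta_\pm(\nu)$, so $\theta_\pm$ does land in (and surjects onto) the $\pm i$-eigenbundle $U_\pm$, while $\theta_\pm(j\nu)=\pm i\,\theta_\pm(\nu)$ shows that it intertwines the fibrewise complex structure $j$ with multiplication by $\pm i$. Moreover $\pr_{T_\C M}\theta_+(\nu)=\rho(\nu)$ by the very definition \eqref{eq_anchor_cplx_LA} of the anchor $\rho$. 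Since the sections $\theta_\pm(\nu)$ with $\nu$ real generate $\Gamma(U_\pm)$ over $C^\infty(M,\C)$, and the matching of anchors forces the Leibniz terms to correspond, it suffices to verify every claimed identity on such generators.

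The computational heart is the evaluation of the complexified dull bracket on two generators. Using that $\DMbra{\cdot}{\cdot}_{\Delta^\C}$ is $\C$-bilinear and extends the real dull bracket, I would expand
\begin{equation*}
\DMbra{\theta_+\nu_1}{\theta_+\nu_2}_{\Delta^\C}
=\tfrac14\Bigl(\DMbra{\nu_1}{\nu_2}_\Delta-\DMbra{j\nu_1}{j\nu_2}_\Delta\Bigr)
-\tfrac{i}{4}\Bigl(\DMbra{\nu_1}{j\nu_2}_\Delta+\DMbra{j\nu_1}{\nu_2}_\Delta\Bigr).
\end{equation*}
At this point I would invoke integrability. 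Since $\Delta$ is adapted to $\JJ$, Section~\ref{sec_lgcs_int} shows that the Nijenhuis tensor $N_{j,\DMbra{\cdot}{\cdot}_\Delta}$ vanishes; this is exactly the relation $\DMbra{\nu_1}{\nu_2}_\Delta-\DMbra{j\nu_1}{j\nu_2}_\Delta=-j\bigl(\DMbra{j\nu_1}{\nu_2}_\Delta+\DMbra{\nu_1}{j\nu_2}_\Delta\bigr)$. The left-hand side is $2\AAA(\nu_1,\nu_2)$ by \eqref{eq_def_AAA}, and applying $j$ (with $j^2=-\id$) gives $\DMbra{j\nu_1}{\nu_2}_\Delta+\DMbra{\nu_1}{j\nu_2}_\Delta=2j\AAA(\nu_1,\nu_2)$. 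Substituting both relations collapses the display to $\half\bigl(\AAA(\nu_1,\nu_2)-ij\AAA(\nu_1,\nu_2)\bigr)=\theta_+(\AAA(\nu_1,\nu_2))$.

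I would then run the parallel, purely formal computation for $\AAA_\C$: expanding $\AAA_\C(\theta_+\nu_1,\theta_+\nu_2)$ by $\C$-bilinearity and using the $j$-bilinearity of $\AAA$ (which is \eqref{AClinear}, valid precisely because $N_{j,\DMbra{\cdot}{\cdot}_\Delta}=0$) together with $\AAA(j\nu_1,j\nu_2)=-\AAA(\nu_1,\nu_2)$, again produces $\theta_+(\AAA(\nu_1,\nu_2))$. Comparing the two results proves at once that $\AAA_\C$ and $\DMbra{\cdot}{\cdot}_{\Delta^\C}$ agree on $U_+$, and that both close on $U_+$ (their common value $\theta_+(\AAA(\nu_1,\nu_2))$ is again a section of $U_+$). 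Finally, because $\theta_+$ is a $\C$-linear bundle isomorphism intertwining anchors and brackets, the complex Lie algebroid axioms for $(TM\oplus E^*,\rho,\AAA)$ from Theorem~\ref{thm_lgcs_DMC} transfer verbatim, yielding the asserted $\C$-Lie algebroid structure on $U_+$ with anchor $\pr_{T_\C M}|_{U_+}$ and the stated isomorphism; the $U_-$ statement follows by complex conjugation, i.e.\ replacing $i$ by $-i$ throughout. The main obstacle is not any single estimate but the bookkeeping of the two competing complex structures---the fibrewise $j$ on $TM\oplus E^*$ and the complexification scalar $i$---and feeding in integrability in exactly the form $N_{j,\DMbra{\cdot}{\cdot}_\Delta}=0$ so that the imaginary part of the bracket reassembles into $j\AAA$.
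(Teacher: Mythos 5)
Your proposal is correct and takes essentially the same route as the paper: both arguments rest on the definition \eqref{eq_def_AAA} of $\AAA$, on the $\C$-bilinearity \eqref{AClinear} (equivalently the vanishing of $N_{j,\DMbra{\cdot}{\cdot}_\Delta}$), and on the canonical map $\nu\mapsto\half(\nu\mp ij\nu)$ to transfer the complex Lie algebroid axioms from $(TM\oplus E^*,\rho,\AAA)$. The only difference is presentational: the paper verifies the coincidence of $\AAA_\C$ with $\DMbra{\cdot}{\cdot}_{\Delta^\C}$ directly on sections of $U_\pm$ via the eigenvalue identity $j_\C u=\pm iu$ (so that $\DMbra{j_\C u_1}{j_\C u_2}_{\Delta^\C}=-\DMbra{u_1}{u_2}_{\Delta^\C}$), whereas you pull both brackets back to real generators and check that each equals $\theta_\pm(\AAA(\nu_1,\nu_2))$ --- the same identities, organised in a different order.
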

\begin{proof}
  By \eqref{AClinear} the complexified bracket
  $\AAA_\C$ restricts to the two eigenbundles $U_{\pm}$ of $j_\C$.
		
	It follows directly from the definition of $\AAA$ in \eqref{eq_def_AAA}
	that the restriction of $\AAA_\C$ to $U_\pm$ coincides with the 
	complexification of $\DMbra{\cdot}{\cdot}_\Delta$ for any adapted 
	Dorfman connection $\Delta$. Therefore it is anchored by the 
	restriction of $\pr_{T_\C M}$ and satisfies the Leibniz identity.
	Skew-symmetry and Jacobi identity follow from the corresponding 
	properties for $\AAA$.
	
	It is easy to check that $TM\oplus E^*\to U_\pm$, 
	$\nu\mapsto \nu\mp i j \nu$	is indeed an isomorphism of complex 
	Lie algebroids, where the fibre-wise complex structures are $j$
	on one side and induced from the complexification on the other. 
      \end{proof}
Proposition \ref{prop_AAA_U_lie}  and Theorem
        \ref{thm_lgcs_DMC} now imply Theorem \ref{main}.
      \begin{proof}[Proof of Theorem \ref{main}]
        The complex Lie algebroid in induced as in Theorem
        \ref{thm_lgcs_DMC} by a linear generalised complex structure
        is quasi-real by Proposition \ref{prop_AAA_U_lie}.

        Conversely, let $E\to M$ be a smooth vector bundle with a
        vector bundle morphism $j\colon TM\oplus E^*\to TM\oplus E^*$
        such that $j^{2}=-\id_{TM\oplus E^*}$.  Assume that
        $(TM\oplus E^*, \pr_{TM},j,[\cdot\,,\cdot])$ is a quasi-real
        complex Lie algebroid as in Definition \ref{def_qrca}.  Then
        there is a dull bracket $\llb\cdot\,,\cdot\rrb$ on sections of
        $TM\oplus E^*$, that is anchored by $\pr_{TM}$ and such that
        the canonical isomorphism
        $TM\oplus E^*\to (TM\oplus E^*)^{1,0}$ is an isomorphism of
        the complex \emph{Lie algebroids}
        $((TM\oplus E^*)^{1,0}, \pr_{TM}^{1,0}, j_\C,
        \llb\cdot\,,\cdot\rrb_{1,0})$ and
        $(TM\oplus E^*, \pr_{TM},j,[\cdot\,,\cdot])$.  The
        compatibility of the anchors is immediate by definition of
        $\pr_{TM,j}$.

        Then for all $\nu_1,\nu_2\in\Gamma(TM\oplus E^*)$,
        \begin{equation}\label{quasi_real}
          \half\left([\nu_1,\nu_2]\otimes 1-j[\nu_1,\nu_2]\otimes i\right)
            =\frac{1}{4}\llb \nu_1\otimes 1-j(\nu_1)\otimes i, \nu_2\otimes 1- j(\nu_2)\otimes i\rrb.
          \end{equation}
          The right-hand side of this equation is \[
            \half\left(\frac{\llb \nu_1,\nu_2\rrb-\llb j\nu_1, j\nu_2\rrb}{2}\otimes 1-\frac{\llb j\nu_1, \nu_2\rrb+\llb \nu_1, j\nu_2\rrb}{2}\otimes i\right),
          \]
          which, compared with its left-hand side, gives
          \[ \left\{\begin{array}{ll}
                      [\nu_1,\nu_2]&=\frac{\llb \nu_1,\nu_2\rrb-\llb j\nu_1, j\nu_2\rrb}{2}\\
                      j[\nu_1,\nu_2]&=\frac{\llb j\nu_1, \nu_2\rrb+\llb \nu_1, j\nu_2\rrb}{2}
                      \end{array}\right..
                  \]
                  Since $\nu_1,\nu_2\in\Gamma(TM\oplus E^*)$ were
                  arbitrary, these equations yield that
                  $\llb\cdot\,,\cdot\rrb$ has vanishing Nijenhuis tensor
                  with respect to $j$: $N_{j,\llb\cdot\,,\cdot\rrb=0}$
                  and $[\cdot\,,\cdot]$ is defined by
                  $\llb\cdot\,,\cdot\rrb$ and $j$ as in
                  \eqref{eq_def_AAA}.  

                  Finally, assume that $\llb\cdot\,,\cdot\rrb$ and
                  $\llb\cdot\,,\cdot\rrb'$ both satisfy
                  \eqref{quasi_real} and consider the form $\Psi\in\Omega^2(TM\oplus E^*,E^*)$ defined by
                  \[ \llb\nu_1,\nu_2\rrb'=\llb\nu_1,\nu_2\rrb+(0,\Psi(\nu_1,\nu_2))
                  \]
                  for all $\nu_1,\nu_2\in\Gamma(TM\oplus E^*)$, see
                  \eqref{eq_change_bra}. Then
                  \[\frac{\llb \nu_1,\nu_2\rrb'-\llb j\nu_1, j\nu_2\rrb'}{2}=[\nu_1,\nu_2]=\frac{\llb \nu_1,\nu_2\rrb-\llb j\nu_1, j\nu_2\rrb}{2}
                  \]
                  and so
                  \begin{equation*}
                    \begin{split}
                      \llb\nu_1,\nu_2\rrb-\llb j\nu_1, j\nu_2\rrb&=\llb \nu_1,\nu_2\rrb'-\llb j\nu_1, j\nu_2\rrb'\\
                      &=\llb\nu_1,\nu_2\rrb+(0,\Psi(\nu_1,\nu_2))-\llb j\nu_1, j\nu_2\rrb-(0,\Psi(j\nu_1,j\nu_2)),
                      \end{split}
                    \end{equation*}
                    which implies
                    $\Psi(\nu_1,\nu_2)=\Psi(j\nu_1,j\nu_2)$ for all
                    $\nu_1,\nu_2\in\Gamma(TM\oplus E^*)$.  Hence, the
                    Dorfman connections defined by the two dull
                    brackets are $j$-equivalent as in Defintion
                    \ref{def_j_equiv}.

                    By Theorem \ref{thm_lgcs_DMC},
                  $E$ is thus equipped with a linear generalised complex
                  structure $\mathcal J$ with core $j$ and such that
                  $\llb\cdot\,,\cdot\rrb$ is adapted to $\mathcal J$.
                \end{proof}

                The following example discusses how Theorem \ref{main}
                specialises in the case of holomorphic vector bundles.
                \begin{example}\label{hol_ex_cpla}
                   Let $E\to M$ be a holomorphic vector bundle. As shown in 
                   Section \ref{sec_hol_vb} this corresponds
        to a linear complex structure $J_E$ on $E$ over
        $J_M\colon TM\to TM$ with core morphism $j_E\colon E\to E$.
        The corresponding generalised complex structure $\JJ$, its
        side morphism $j$ and core morphism $j_C$ are
\begin{equation}\label{eq_complex_gcs}
	\JJ=
	\left(\begin{array}{cc} J_E & 0 \\ 
	0 & -J_E^t \end{array}\right)\,,\qquad 
	j=
	\left(\begin{array}{cc} J_M & 0 \\ 
	0 & -j_E^t \end{array}\right)\,,\qquad
	j_C=
	\left(\begin{array}{cc} j_E & 0 \\ 
	0 & -J_M^t \end{array}\right)\,.
\end{equation}
The equality $j=-j_C^t$ is immediate.

The eigenbundles of the complexified morphisms are  given by 
\begin{equation}\label{eq_cplx_U_K}
	\begin{split} 
		U_+&=T^{1,0}M\oplus (E^{0,1})^*\,,\qquad
		K_+=E^{1,0}\oplus (T^{0,1}M)^*\,,\\
		U_-&=T^{0,1}M\oplus (E^{1,0})^*\,,\qquad
		K_-=E^{0,1}\oplus (T^{1,0}M)^*\,,\\
	\end{split}
\end{equation}
where $T^{1,0}M$, $T^{0,1}M$, $E^{1,0}$ and $E^{0,1}$ are
the $\pm i$-eigenbundles of $J_M$ and $j_E$, respectively.

   A linear generalised complex structure on $E$ is
        simply a holomorphic structure on $E$ if and only if the 
        generalised complex structure is of the form above, see Section 
        \ref{sec_hol_vb}. In that case the complex Lie algebroid structure 
        on $TM\oplus E^*$ with fibrewise complex structure $j=(J_M,-j_E^t)$ 
        is given by the anchor
            \[ \rho\colon TM\oplus E^*\to T^{1,0}M\subseteq T_{\mathbb C}M, 
            \qquad \rho(X,\eps)=\half(X\otimes
                1-(J_MX)\otimes i)\,,
            \]
            and,  by \eqref{eq_std_bra}, the bracket
            \[\AAA\bigl((X_1,\eps_1), (X_2,\eps_2)\bigr)
                      =\half\Bigl([X_1,X_2]-[J_MX_1,J_MX_2],
               \nabla_{X_1}^*\eps_2-\nabla_{X_2}^*\eps_1+\nabla_{J_MX_1}^*j_E^*\eps_2
               -\nabla_{J_MX_2}^*j_E^*\eps_1
              \Bigr)
            \]
            where $\nabla^*$ is a flat $TM$-connection on $E^*$, dual
            to a $TM$-connection $\nabla$ on $E$ which is adapted to
            the linear complex structure $J_E$, see Section
            \ref{sec_hol_vb}.
            
            Now $TM$ is $\C$-isomorphic to $T^{1,0}M$ and $E^*$ is $\C$-isomorphic 
            to $(E^{0,1})^*$, since the complex structure on $E^*$ is taken to 
            be $-j_E^t$. After these identifications, $\nabla^*$ is dual to the 
            $T^{1,0}M$-connection $\nabla$ on $E^{0,1}$, defined as complex conjugate 
            to the flat $T^{0,1}M$-connection $\bar\partial$ on $E^{1,0}\cong E$
            corresponding to the holomorphic structure. That is for 
            $X\in\Gamma(T^{1,0}M)$ and $e\in\Gamma(E^{0,1})$, 
            \[
            \nabla_Xe:=\overline{\bar{\partial}_{\overline{X}}\overline{e}}\,.
            \]
          Hence $TM\oplus E^*$ is isomorphic as a complex Lie algebroid to the Lie
          algebroid $T^{1,0}M\oplus (E^{0,1})^*$ which is induced by this connection,
          with bracket $([X_1,X_2],\nabla_{X_1}^*\eps_2- \nabla_{X_2}^*\eps_1)$ for
          $X_1,X_2\in\Gamma(T^{1,0})$ and $\eps_1,\eps_2\in\Gamma((E^{0,1})^*)$. Complexification
          of $\AAA$ and restriction to these eigenbundles gives precisely this bracket. 
      \end{example}
    
    \begin{example}
    		In the case where the generalised complex structure is induced by a 
    		linear symplectic structure $\omega^\flat\colon TE\to T^*E$, the side 
    		morphism is coming from an isomorphism $\tau\colon TM\to E^*$. The
    		fibrewise complex structure $j$ on $TM\oplus E^*$ is after identification
    		of $E^*$ with $TM$ via $\tau$ simply the complex structure of $T_\C M$, 
    		the anchor is then given by $\rho=\half\id_{T_\C M}$ and the bracket
    		$\AAA$ is given by $\half[\cdot,\cdot]_\C$. This follows from the
    		fact that $TM\oplus E^*$ is isomorphic as complex Lie algebroid to 
    		$U_+=\grap(-\tau_\C)$ according to Proposition \ref{prop_AAA_U_lie}, 
    		and the bracket of $\grap(-\tau)$ is shown in \cite{Jotz18a} to be 
    		the Lie bracket of vector fields. The factor of $\half$ comes
    		here from the isomorphism $TM\oplus E^*\to U_+$.
    	\end{example}

The description of VB-Dirac structures in $\TT E$ via adapted Dorfman 
connections of \cite{Jotz18a} can directly be extended to complex VB-Dirac 
structures and adapted complex Dorfman connections, by simply demanding 
complex linearity where appropriate. This leads to the following 
adaptation of a theorem in \cite{Jotz18a}.
\begin{thm}\label{thm_cplx_VB_Dirac}
	Let $D$ be a sub-double vector bundle of $\TT_\C E$ over $E$ and 
	$U\subseteq T_\C M\oplus E_\C^*$, with core 
	$K\subseteq E_\C\oplus T^*_\C M$ such that $D$ is a complex 
	subbundle of $\TT_\C E\to E$.
	Let $\Delta$ be a complex $(T_\C M\oplus E_\C ^*)$-Dorfman connection 
	on $E_\C \oplus T^*_\C M$ which is adapted to $D$. Then $D$ is a 
	complex VB-Dirac structure if and only if $U=K^\circ$ and 
	$(U,\pr_{T_\C M}|_U,\DMbra{\cdot}{\cdot}_\Delta|_U)$ is a complex
	Lie algebroid. 
\end{thm}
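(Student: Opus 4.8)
The plan is to verify the two defining conditions of a VB-Dirac structure --- that $D\to E$ be Lagrangian and involutive in $\TT_\C E\to E$ --- on a convenient spanning set, exactly as in the real case treated in \cite{Jotz18a}, with every formula of Section~\ref{sec_bg_gentan} replaced by its complex-linear extension. Since $\Delta$ is adapted to $D$, the lifts $\slift(\nu)$ for $\nu\in\Gamma(U)$ together with the core sections $\tau^\uparrow$ for $\tau\in\Gamma(K)$ lie in $\Gamma_E(D)$ and span it as a $C^\infty(E,\C)$-module. Hence it suffices to evaluate the pairing and the Courant--Dorfman bracket of $\TT_\C E\to E$ on these generators, using the complex-linear extensions of the pairing and bracket formulas of Section~\ref{sec_bg_gentan}.

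First I would treat the Lagrangian condition. The pairing of two core sections vanishes automatically; the pairing $\langle\slift(\nu),\tau^\uparrow\rangle=q_E^*\langle\nu,\tau\rangle$ vanishes for all $\nu\in\Gamma(U)$ and $\tau\in\Gamma(K)$ if and only if $U\subseteq K^\circ$; and $\langle\slift(\nu_1),\slift(\nu_2)\rangle=\ell_{\DMbra{\nu_1}{\nu_2}_\Delta+\DMbra{\nu_2}{\nu_1}_\Delta}$ vanishes on $\Gamma(U)$ if and only if $\DMbra{\cdot}{\cdot}_\Delta|_U$ is skew-symmetric. Thus $D$ is isotropic precisely when $U\subseteq K^\circ$ and the restricted dull bracket is skew. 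Maximality is then a rank count: a Lagrangian subbundle of $\TT_\C E\to E$ has complex rank $\dim M+\mathrm{rk}\,E$, while $D$ has rank $\mathrm{rk}_\C U+\mathrm{rk}_\C K$; since $U\subseteq K^\circ$ gives $\mathrm{rk}_\C U\le\dim M+\mathrm{rk}\,E-\mathrm{rk}_\C K$, equality of ranks upgrades the inclusion to $U=K^\circ$. Hence $D$ is Lagrangian if and only if $U=K^\circ$ and $\DMbra{\cdot}{\cdot}_\Delta|_U$ is skew-symmetric.

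Next I would treat involutivity, assuming $D$ Lagrangian, so that $K=U^\circ$. The bracket of two core sections vanishes. The identity $\DMbra{\slift(\nu)}{\tau^\uparrow}=(\Delta_\nu\tau)^\uparrow$ shows that brackets of lifts with cores stay in $\Gamma_E(D)$ if and only if $\Delta_\nu\tau\in\Gamma(K)$ for $\nu\in\Gamma(U)$ and $\tau\in\Gamma(K)$; dualising through $\pr_{T_\C M}(\nu)\langle\tau,\nu'\rangle=\langle\Delta_\nu\tau,\nu'\rangle+\langle\tau,\DMbra{\nu}{\nu'}_\Delta\rangle$ and using $K=U^\circ$ converts this into $\DMbra{\Gamma(U)}{\Gamma(U)}_\Delta\subseteq\Gamma(U)$, i.e.~the bracket closes on $U$. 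Finally $\DMbra{\slift(\nu_1)}{\slift(\nu_2)}=\slift\DMbra{\nu_1}{\nu_2}_\Delta-\corelinear{R_\Delta(\nu_1,\nu_2)\circ\iota_E}$ lies in $\Gamma_E(D)$ if and only if, the first summand already being a section of $D$ by closedness, the core-linear curvature term is; a core-linear section lies in $D$ exactly when its image lies in $K=U^\circ$, so the condition reads $\langle\nu_3,R_\Delta(\nu_1,\nu_2)(\iota_E e)\rangle=0$ for all $\nu_3\in\Gamma(U)$ and $e\in\Gamma(E)$. Pairing the Jacobiator identity $\Jac_\Delta(\nu_1,\nu_2,\nu_3)=R_\Delta(\nu_1,\nu_2)^t(\nu_3)$ with $\iota_E e$ then identifies this with $\Jac_\Delta|_U=0$, the Jacobi identity of $\DMbra{\cdot}{\cdot}_\Delta|_U$.

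Assembling the pieces, skew-symmetry, closedness and the Jacobi identity --- together with the $\C$-bilinearity, the Leibniz rule and the bracket-morphism property $\pr_{T_\C M}\DMbra{\nu_1}{\nu_2}_\Delta=[\pr_{T_\C M}\nu_1,\pr_{T_\C M}\nu_2]$, which hold on all of $T_\C M\oplus E_\C^*$ for the complex dull bracket and restrict to $U$ --- say exactly that $(U,\pr_{T_\C M}|_U,\DMbra{\cdot}{\cdot}_\Delta|_U)$ is a complex Lie algebroid, giving the stated equivalence. The step I expect to be the main obstacle is the last one: matching the core-linear curvature term $\corelinear{R_\Delta(\nu_1,\nu_2)\circ\iota_E}$ against the Jacobiator through the duality between $\Delta$ and $\DMbra{\cdot}{\cdot}_\Delta$, and checking that, since $\Jac_\Delta$ is $E^*$-valued, pairing with all $\iota_E e$ detects its full vanishing. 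The remaining verifications are the routine complex-linear transcription of the arguments of \cite{Jotz18a}.
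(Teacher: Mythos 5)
Your proof is correct and takes essentially the approach the paper intends: the paper states this theorem as the direct complex-linear extension of the corresponding result of \cite{Jotz18a}, whose proof is exactly your computation -- evaluating the pairing and the Courant--Dorfman bracket on the adapted lifts $\slift(\nu)$, $\nu\in\Gamma(U)$, and core sections $\tau^\uparrow$, $\tau\in\Gamma(K)$, recalled in Section \ref{sec_bg_gentan}. Your rank count for maximality and the duality/Jacobiator identifications (closure of the dull bracket on $U$ via $K=U^\circ$, and $\Jac_\Delta|_U=0$ via pairing the curvature term with $\iota_E e$) are the same steps as in the real case.
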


This description of complex VB-Dirac structures together with 
Theorem \ref{thm_lgcs_DMC} then leads to the following description
of linear generalised complex structures.
\begin{cor}\label{cor_lgcs_VB_Dirac}
	A linear generalised complex structure $\JJ$ on a vector bundle 
	$E$ is equivalent to a pair of transverse, complex conjugated 
	complex VB-Dirac structures $D_\pm$ in $\TT_\C E$.
\end{cor}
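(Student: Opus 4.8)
The plan is to exhibit the asserted equivalence as the restriction to the linear setting of the general eigenbundle correspondence of Section \ref{sec_bg_gcs}, using Theorem \ref{thm_lgcs_DMC} and Theorem \ref{thm_cplx_VB_Dirac} to match the two Dorfman-connection descriptions. Under this correspondence a generalised complex structure $\JJ$ on the Courant algebroid $\TT_\C E\to E$ is the same as the pair $(D_+,D_-)$ of its $\pm i$-eigenbundles, and the point to extract is that $\JJ$ is \emph{linear} precisely when $D_\pm$ are \emph{sub-double vector bundles}.

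For the forward direction I start from a linear generalised complex structure $\JJ$. By Section \ref{sec_bg_gcs} its eigenbundles $D_\pm\subseteq\TT_\C E$ form a transverse, complex conjugated pair of Dirac structures, and by Section \ref{sec_cplx_VB_Dirac} linearity of $\JJ$ makes them sub-double vector bundles over the sides $U_\pm$ (the $\pm i$-eigenbundles of $j_\C$) with cores $K_\pm$ (the $\pm i$-eigenbundles of $j_{C,\C}$). Choosing an adapted Dorfman connection by Proposition \ref{prop_adapted_DMC}, Lemma \ref{lem_ann_U} gives $U_\pm^\circ=K_\pm$ and Proposition \ref{prop_AAA_U_lie} identifies $(U_\pm,\pr_{T_\C M}|_{U_\pm},\DMbra{\cdot}{\cdot}_{\Delta^\C}|_{U_\pm})$ with complex Lie algebroids; Theorem \ref{thm_cplx_VB_Dirac} then certifies that $D_\pm$ are complex VB-Dirac structures.

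For the converse I am handed a transverse, complex conjugated pair of complex VB-Dirac structures $D_\pm$ with sides $U_\pm$ and cores $K_\pm$. Transversality of the sub-double vector bundles splits $U_+\oplus U_-=(TM\oplus E^*)_\C$ and $K_+\oplus K_-=(E\oplus T^*M)_\C$, while complex conjugation gives $U_-=\overline{U_+}$ and $K_-=\overline{K_+}$; hence the prescription $j_\C|_{U_\pm}=\pm i$ defines a real morphism $j$ on $TM\oplus E^*$ with $j^2=-\id$, compatibly with $j_C$ on the cores via $U_\pm^\circ=K_\pm$. Theorem \ref{thm_cplx_VB_Dirac} endows $U_+$ with a complex Lie algebroid structure anchored by $\pr_{T_\C M}|_{U_+}$, which I transport along the canonical isomorphism of Proposition \ref{prop_AAA_U_lie} to a complex Lie algebroid $(TM\oplus E^*,\rho,\AAA)$, and Theorem \ref{thm_lgcs_DMC} then returns a linear generalised complex structure on $E$ with side $j$. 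Because each intermediate equivalence preserves the datum $j$ and the restricted bracket on $U_+$, the two assignments are mutually inverse.

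I expect the main obstacle to lie in this converse, specifically in matching the \emph{intrinsic} complex Lie algebroid on $U_+$ furnished by Theorem \ref{thm_cplx_VB_Dirac} with the \emph{Dorfman-connection} presentation $(j,\AAA)$ demanded by Theorem \ref{thm_lgcs_DMC}: one must check that a complex Dorfman connection adapted to $D_+$ can be chosen as the complexification of a real one, so that the bracket it induces on $U_+$ is genuinely of the form \eqref{eq_def_AAA} for a real dull bracket and the recovered $j$. The conjugation symmetry $D_-=\overline{D_+}$, together with the splittings of sides and cores above, is exactly what makes this conjugation-symmetric (hence real) choice possible, and is simultaneously the input guaranteeing that $j$ and $j_C$ square to $-\id$.
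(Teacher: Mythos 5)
Your proposal is correct and follows essentially the same route as the paper: there the corollary is stated as an immediate consequence of the complex VB-Dirac characterisation (Theorem \ref{thm_cplx_VB_Dirac}) combined with Theorem \ref{thm_lgcs_DMC}, via the eigenbundle setup of Section \ref{sec_cplx_VB_Dirac} (Lemma \ref{lem_ann_U} and Proposition \ref{prop_AAA_U_lie}), which is exactly the chain of results you invoke. The paper leaves both directions implicit, so your extra care in the converse --- recovering $j$ from the splitting of sides and cores and choosing a conjugation-symmetric (hence real) adapted Dorfman connection so that Theorem \ref{thm_lgcs_DMC} applies --- is a legitimate filling-in of details consistent with the paper's intended argument.
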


\section{Generalised complex structures on Lie algebroids}
\label{sec_gcLA}
Let $A\rightarrow M$ be a Lie algebroid with anchor
$\rho\colon A\to TM$.  In this case the generalised tangent bundle
$\TT A$ is itself a Lie algebroid over the side $TM\oplus A^*$, as
described for example in \cite{LiBland12} and \cite{Jotz18a}.  This
section considers a linear generalised complex structure on $A$, that
is also compatible with the Lie algebroid structure on $\TT A$.

\begin{mydef}\label{def_gc_LA}\cite{JoStXu16}
  A \textbf{generalised complex Lie algebroid} is a Lie algebroid
  $A\to M$ equipped with a linear generalised complex structure
  $\JJ\colon\TT A\rightarrow \TT A$ which is also a Lie
  algebroid morphism over the side morphism
  $j\colon TM\oplus A^*\to TM\oplus A^*$.
\end{mydef}

In the situation of the last definition, choose a Dorfman connection
$\Delta\colon \Gamma(TM\oplus A^*)\times\Gamma(A\oplus T^*M)\to
\Gamma(A\oplus T^*M)$ that is adapted to $\JJ$.  It follows directly
from the symmetry of the linear splitting that also the lift
$\slift\colon \Gamma(A)\to \Gamma_{TM\oplus A^*}^\ell(\TT A)$ is
compatible with $\JJ$, that is
\[
	\JJ\left(\slift(a)\right)=\slift(a)\circ j\
      \]
      for all $a\in\Gamma(A)$.
      
      Hence, by the results in \cite{DrJoOr15}, $\JJ$ is a Lie algebroid 
      morphism if and only if $(j,-j^t,0)$ is an automorphism of the
      2-representation 
      $\bigl((\rho,\rho^t),\nabla^\bas,\nabla^\bas,R_\Delta^\bas\bigr)$
      corresponding to the VB-algebroid structure on $TA\oplus T^*A$
      in the linear splitting defined by the adapted skew-symmetric Dorfman
      connection $\Delta$, see Theorem \ref{thm_TTA_LA}.
      That is,
      \begin{enumerate}
\item $j\circ(\rho,\rho^t)=-(\rho, \rho^t)\circ j^t$,
\item $\nabla_a^{\bas,\Hom}(j,-j^t)=0$ for all $a\in\Gamma(A)$, and
\item $-j^t\circ R_\Delta^\bas= R_\Delta^\bas\circ j $.
\end{enumerate}
Since the basic connections defined by a skew-symmetric Dorfman
connection are dual to each other, the second equality reduces to
$\nabla_a^\bas \circ j=j\circ\nabla_a^\bas$ for all $a\in\Gamma(A)$.
As observed before, $j$ and $j_C=-j^t$ are fibrewise complex structures 
on $TM\oplus A^*$ and $A\oplus T^*M$, respectively. The properties
above simply state that $(\rho,\rho^t)$, $\nabla_a^\bas$ and $R_\Delta^\bas$
are all complex linear.
Together with Theorem \ref{thm_lgcs_DMC} this 
immediately give the following characterisation of a generalised
complex Lie algebroid.
\begin{thm}\label{cor_lgcs_LA}
	Let $A$ be a Lie algebroid over $M$ with anchor $\rho$. A linear 
	generalised complex structure $(j,[\Delta])$ on $A$ (see Thm \ref{thm_lgcs_DMC}) 
	is compatible with the Lie algebroid structure in the sense of Definition 
	\ref{def_gc_LA} 	if and only if the basic connections and basic curvature 
	induced by any representative $\Delta$ in $[\Delta]$, as well as the 
	map $(\rho,\rho^t)$	are complex linear with respect to the complex 
	structures $j$ on $TM\oplus A^*$ and $-j^t$ on $A\oplus T^*M$.
      \end{thm}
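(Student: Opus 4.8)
The plan is to read the claimed equivalence off the discussion immediately preceding the statement, organising it into three clean steps. First I would invoke Proposition \ref{prop_adapted_DMC} to fix a skew-symmetric Dorfman connection $\Delta$ adapted to $\JJ$, so that $\JJ(\slift(\nu))=\slift(j\nu)$ for all $\nu\in\Gamma(TM\oplus A^*)$. The key preliminary observation is that a single linear splitting $\Sigma$ of $\TT A$ produces \emph{both} horizontal lifts $\slift\colon\Gamma(A)\to\Gamma^\ell_{TM\oplus A^*}(\TT A)$ and $\slift\colon\Gamma(TM\oplus A^*)\to\Gamma^\ell_A(\TT A)$; since adaptedness amounts to the identity $\JJ\circ\Sigma=\Sigma\circ(\id_A\times j)$ at the level of the splitting map, it forces $\JJ(\slift(a))=\slift(a)\circ j$ on the $\Gamma(A)$-lifts as well, while on core sections $\JJ$ acts by $j_C=-j^t$ (Definition \ref{def_lin_gcs} together with Proposition \ref{prop_J_gacs}).

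With both lift families and the core action under control, I would apply the characterisation of \cite{DrJoOr15}: for the VB-algebroid $\TT A\to TM\oplus A^*$ presented in the decomposition of Theorem \ref{thm_TTA_LA} by the $2$-representation $\bigl((\rho,\rho^t),\nabla^\bas,\nabla^\bas,R^\bas_\Delta\bigr)$, a double vector bundle morphism that is the identity on $A$, equal to $j$ on $TM\oplus A^*$ and to $-j^t$ on the core $A\oplus T^*M$, and that carries horizontal lifts to horizontal lifts with no core-linear correction term (which is exactly what the \emph{adapted} $\Delta$ guarantees, so the homotopy component vanishes), is a Lie algebroid morphism if and only if $(j,-j^t,0)$ is an automorphism of that $2$-representation. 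Unwinding the definition of a morphism of $2$-representations gives the three conditions
\[
 j\circ(\rho,\rho^t)=-(\rho,\rho^t)\circ j^t,\qquad
 \nabla^{\bas,\Hom}_a(j,-j^t)=0,\qquad
 -j^t\circ R^\bas_\Delta=R^\bas_\Delta\circ j.
\]

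It then remains to recognise these three equations as complex linearity. Writing $j_C=-j^t$ for the fibrewise complex structure on $A\oplus T^*M$ (Proposition \ref{prop_J_gacs}), the first reads $j\circ(\rho,\rho^t)=(\rho,\rho^t)\circ j_C$, i.e.\ $(\rho,\rho^t)$ is $\C$-linear; because the two basic connections are mutually dual \cite{Jotz18a}, the second collapses to $\nabla^\bas_a\circ j=j\circ\nabla^\bas_a$, the $\C$-linearity of $\nabla^\bas$; and the third is $j_C\circ R^\bas_\Delta=R^\bas_\Delta\circ j$, the $\C$-linearity of $R^\bas_\Delta$. Combining this chain of equivalences with Theorem \ref{thm_lgcs_DMC}, which encodes a linear generalised complex structure precisely as the pair $(j,[\Delta])$, yields the stated characterisation; finally, since $\JJ$ depends only on $[\Delta]$ and the three conditions are equivalent to the $\Delta$-independent property that $\JJ$ be a Lie algebroid morphism, they hold for one representative of $[\Delta]$ if and only if they hold for all, which justifies the phrase ``any representative''. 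The main obstacle I anticipate is the careful application of \cite{DrJoOr15}: one must verify that the adapted splitting makes $\JJ$ a genuine VB-algebroid morphism whose homotopy term vanishes, so that the abstract morphism-of-$2$-representations conditions specialise exactly to the displayed triple.
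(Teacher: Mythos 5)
Your proposal is correct and follows essentially the same route as the paper: fix a Dorfman connection adapted to $\JJ$ (so that the lift of $\Gamma(A)$ is also compatible with $\JJ$ and the homotopy component vanishes), invoke \cite{DrJoOr15} to reduce compatibility to $(j,-j^t,0)$ being an automorphism of the $2$-representation $\bigl((\rho,\rho^t),\nabla^\bas,\nabla^\bas,R^\bas_\Delta\bigr)$, and then read the three resulting identities as $\C$-linearity, using duality of the basic connections to collapse the middle condition. Your closing remark on $\Delta$-independence within the $j$-equivalence class is a point the paper leaves implicit, but it matches the paper's intent exactly.
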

		
A straightforward complex extension of the corresponding result
in \cite{Jotz19a} yields the following description of complex LA-Dirac
structures in terms of an adapted splitting. 

\begin{cor}\label{cor_cplx_VB_Dirac_LA}
	A complex VB-Dirac structure $D\subseteq\TT_\C A$ with side 
	$U\subseteq T_\C M\oplus A_\C^*$ and core 
	$K\subseteq A_\C\oplus T_\C^*M$ is additionally a Lie subalgebroid 
	of $\TT_\C A \to T_\C M\oplus A_\C^*$ if and only for an adapted 
	complex $(T_\C M\oplus A_\C^*)$-Dorfman connection $\Delta$ on 
	$A_\C \oplus T^*_\C M$ the following conditions are satisfied for 
	all $a,b\in\Gamma(A)$, $u\in\Gamma(U)$:
	\begin{enumerate}
		\item $(\rho,\rho^t)_\C (K)\subseteq U$, 
		\item $\nabla^\bas_a u\in\Gamma(U)$, 
		\item $R^\bas_\Delta(a,b)u\in\Gamma(K)$. 
	\end{enumerate}
	Note that here $\nabla^\bas$ and $R^\bas_\Delta$ denote the
	complex basic connection and complex basic curvature defined
	by complex linear extension of the formulas in the real case.
\end{cor}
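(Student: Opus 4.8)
The plan is to deduce this as a $\C$-linear re-reading of the real LA-Dirac result of \cite{Jotz19a}, using the explicit bracket on $\TT_\C A\to T_\C M\oplus A_\C^*$ provided by the complexification of Theorem \ref{thm_TTA_LA}. Since $\Delta$ is adapted to $D$, the symmetry of the underlying linear splitting---exactly the symmetry used above to see that the lift $\sigma^\Delta_A\colon\Gamma(A)\to\Gamma^\ell_{T_\C M\oplus A_\C^*}(\TT_\C A)$ is compatible with $\JJ$---shows that the sections of $D\to U$ are spanned by the restrictions $\sigma^\Delta_A(a)|_{U}$ of the linear lifts, for $a\in\Gamma(A)$, together with the core sections $\tau^\dagger$ for $\tau\in\Gamma(K)$. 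Hence $D$ is a Lie subalgebroid of $\TT_\C A\to T_\C M\oplus A_\C^*$ if and only if the ($\C$-linearly extended) anchor $\Theta$ sends these generators into $TU$ and their mutual brackets remain sections of $D\to U$.

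First I would dispatch the anchor condition. By parts (4) and (5) of Theorem \ref{thm_TTA_LA} one has $\Theta(\sigma^\Delta_A(a))=\widehat{\nabla^\bas_a}\in\XX^\ell(T_\C M\oplus A_\C^*)$ and $\Theta(\tau^\dagger)=((\rho,\rho^t)_\C\tau)^\uparrow$, a core vector field. A linear vector field on $T_\C M\oplus A_\C^*$ is tangent to the subbundle $U$ precisely when the corresponding connection preserves $U$, and a core vector field $w^\uparrow$ is tangent to $U$ precisely when $w\in\Gamma(U)$. Thus anchor--tangency is equivalent to $\nabla^\bas_a u\in\Gamma(U)$ for all $u\in\Gamma(U)$ together with $(\rho,\rho^t)_\C(K)\subseteq U$, that is, to conditions (2) and (1).

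Next I would impose closure under the bracket. Two core sections bracket to zero by (3) of Theorem \ref{thm_TTA_LA}, giving nothing. For two linear sections, $[\sigma^\Delta_A(a_1),\sigma^\Delta_A(a_2)]=\sigma^\Delta_A([a_1,a_2])-\corelinear{R^\bas_\Delta(a_1,a_2)}$; the first term is again a linear lift and so lies in $\Gamma(D\to U)$, while the core--linear term lies in $\Gamma(D\to U)$ exactly when $R^\bas_\Delta(a_1,a_2)(u)\in K$ for all $u\in\Gamma(U)$, which is condition (3). For the mixed bracket, $[\sigma^\Delta_A(a),\tau^\dagger]=(\nabla^\bas_a\tau)^\dagger$ lies in $\Gamma(D\to U)$ if and only if the basic connection on the core preserves $K$. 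Here I would use that $D$ is a VB-Dirac structure, so $U=K^\circ$ by Theorem \ref{thm_cplx_VB_Dirac}, together with the fact that the two basic connections on $TM\oplus A^*$ and $A\oplus T^*M$ are mutually dual: a connection preserves $U$ if and only if its dual preserves the annihilator $U^\circ=K$. Hence preservation of $K$ by the core connection is the same requirement as condition (2), and produces nothing new.

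Combining the two analyses, closure and anchor--tangency hold simultaneously if and only if (1), (2) and (3) are satisfied, which is the assertion. The only delicate point is this last duality step: the closure of the mixed bracket naively demands that the \emph{core} basic connection preserve $K$, whereas condition (2) is phrased through the \emph{side} basic connection preserving $U$, and one must recognise these as a single condition via $K=U^\circ$ and the duality of the basic connections. Everything else is a routine $\C$-linear extension of the real computation in \cite{Jotz19a}, with $\nabla^\bas$ and $R^\bas_\Delta$ understood as the complex-linear extensions of their real counterparts.
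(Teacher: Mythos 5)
Your proof is correct and is precisely the argument the paper intends: the paper itself offers no written proof, deferring to a ``straightforward complex extension'' of the corresponding result in \cite{Jotz19a}, and your write-up carries out exactly that extension, checking anchor-tangency and bracket closure on the generators $\sigma^\Delta_A(a)\an{U}$ and $\tau^\dagger$, $\tau\in\Gamma(K)$, via the complexification of Theorem \ref{thm_TTA_LA}. You also correctly handle the one genuinely delicate point, namely that closure of the mixed bracket $[\sigma^\Delta_A(a),\tau^\dagger]=(\nabla^\bas_a\tau)^\dagger$ imposes that the core basic connection preserve $K$, which collapses into condition (2) through $K=U^\circ$ and the duality of the two basic connections -- this is exactly why the corollary needs only the three listed conditions.
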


The following description of generalised complex Lie algebroids in
terms of LA-Dirac structures
is an immediate consequence of this
description of complex LA-Dirac structures, together with Theorem
\ref{cor_cplx_VB_Dirac_LA} and Corollary \ref{cor_lgcs_VB_Dirac}.

\begin{cor}\label{cor_LA_Dirac}
	A generalised complex structure on a Lie algebroid is equivalent 
	to a pair of transverse, complex LA-Dirac structures in $\TT_\C A$. 
\end{cor}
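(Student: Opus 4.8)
The plan is to combine the eigenbundle description of a linear generalised complex structure from Corollary~\ref{cor_lgcs_VB_Dirac} with the two characterisations already in hand: Theorem~\ref{cor_lgcs_LA} for generalised complex Lie algebroids and Corollary~\ref{cor_cplx_VB_Dirac_LA} for complex LA-Dirac structures. By Definition~\ref{def_gc_LA}, a generalised complex structure on the Lie algebroid $A$ is a linear generalised complex structure $\JJ$ on $A$ that is in addition a Lie algebroid morphism over $j\colon TM\oplus A^*\to TM\oplus A^*$. Applying Corollary~\ref{cor_lgcs_VB_Dirac} with $E=A$ assigns to $\JJ$ the pair of transverse, complex conjugated complex VB-Dirac structures $D_\pm=\{\xi\mp i\JJ(\xi)\mid\xi\in\TT A\}$ in $\TT_\C A$, with sides $U_\pm$ the $\pm i$-eigenbundles of $j_\C$ and cores $K_\pm$ the $\pm i$-eigenbundles of $(-j^t)_\C$. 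It then suffices to show that $\JJ$ is a Lie algebroid morphism precisely when $D_+$ (equivalently $D_-$) is a Lie subalgebroid of $\TT_\C A\to T_\C M\oplus A_\C^*$, i.e.\ a complex LA-Dirac structure.

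To establish this I would fix a skew-symmetric Dorfman connection $\Delta$ adapted to $\JJ$ and work in the corresponding linear splitting; its complexification $\Delta^\C$ is then adapted to both $D_+$ and $D_-$, so it is exactly the connection appearing in Corollary~\ref{cor_cplx_VB_Dirac_LA}. By Theorem~\ref{cor_lgcs_LA}, $\JJ$ is a generalised complex Lie algebroid if and only if the map $(\rho,\rho^t)$, the basic connections $\nabla^\bas$ and the basic curvature $R^\bas_\Delta$ are all $\C$-linear for the fibrewise complex structures $j$ on $TM\oplus A^*$ and $-j^t$ on $A\oplus T^*M$. The core of the proof is then the linear-algebraic observation that a real bundle map is $\C$-linear for such complex structures exactly when its complexification preserves the $\pm i$-eigenbundle splitting. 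Concretely, $\C$-linearity of $(\rho,\rho^t)$ is equivalent to $(\rho,\rho^t)_\C(K_\pm)\subseteq U_\pm$; $\C$-linearity of each $\nabla^\bas_a$ is equivalent to $\nabla^\bas_a$ preserving $U_\pm$; and $\C$-linearity of $R^\bas_\Delta$ is equivalent to $R^\bas_\Delta$ sending $U_\pm$ into $K_\pm$. These are precisely conditions~(1)--(3) of Corollary~\ref{cor_cplx_VB_Dirac_LA}, read off for $D_+$ and $D_-$ simultaneously.

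Matching the two lists term by term then yields the equivalence, and a final remark streamlines it: since the Lie algebroid structure on $\TT_\C A$ is the complexification of the real one on $\TT A$, its anchor and bracket commute with complex conjugation and $D_-=\overline{D_+}$. Hence $D_+$ is a complex LA-Dirac structure if and only if $D_-$ is, and verifying conditions~(1)--(3) for $D_+$ alone already forces the $\C$-linearity of all three objects. Together with Corollary~\ref{cor_lgcs_VB_Dirac} this gives that a generalised complex structure on $A$ is equivalent to the pair of transverse complex LA-Dirac structures $D_\pm$, as claimed.

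The step I expect to require the most care is the bookkeeping around the choice of splitting: I must check that the single real adapted Dorfman connection used in Theorem~\ref{cor_lgcs_LA} complexifies to one adapted to both eigenbundles $D_\pm$, so that the basic connections and basic curvature in Theorem~\ref{cor_lgcs_LA} are literally the objects entering conditions~(1)--(3) of Corollary~\ref{cor_cplx_VB_Dirac_LA}. Once this compatibility of adapted splittings is secured, the remainder is the eigenbundle translation above and is routine.
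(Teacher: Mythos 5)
Your proposal is correct and follows essentially the same route as the paper's proof: both pass through Corollary \ref{cor_lgcs_VB_Dirac} to obtain the pair $D_\pm$, fix an adapted Dorfman connection whose complexification is adapted to both eigenbundles, and then translate the $\C$-linearity conditions of Theorem \ref{cor_lgcs_LA} on $(\rho,\rho^t)$, $\nabla^\bas$ and $R^\bas_\Delta$ into the eigenbundle conditions (1)--(3) of Corollary \ref{cor_cplx_VB_Dirac_LA}. Your closing remark that complex conjugation makes the condition on $D_+$ alone suffice is a small streamlining the paper does not spell out, but it does not alter the argument.
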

\begin{proof} 
	According to  Corollary \ref{cor_lgcs_VB_Dirac} a linear generalised
	complex structure $\JJ$ on $A$ is equivalent to a pair of transversal, 
	complex VB-Dirac structures $D_\pm$. 
	Choose a Dorfman connection adapted to $\JJ$ as in Proposition
	\ref{prop_adapted_DMC}. Then the complexification of $\Delta$ is 
	adapted to $D_+$ and to $D_-$ simultaneously. According to 
	the considerations above, $\JJ$ is a Lie algebroid 
	morphism if and only if 
	\begin{enumerate}
		\item $(\rho,\rho^t)\circ j_C=j\circ (\rho,\rho^t)\,,$
		\item $\nabla_a^\bas \circ j=j\circ\nabla_a^\bas\,,$
		\item $j_C\circ R_\Delta^\bas (a,b)
			=R_\Delta^\bas (a,b)\circ j\,.$
	\end{enumerate}
	The first condition is equivalent to 
	$(\rho,\rho^t)(K_\pm)\subseteq U_\pm$, the second condition is
	equivalent to $\nabla^\bas_a u_\pm\in\Gamma(U_\pm)$ for any 
	$a\in\Gamma(A)$ and $u_\pm\in\Gamma(U_\pm)$ and the third condition
	is equivalent to $R^\bas_\Delta(a,b)(u_\pm)\in\Gamma(K_\pm)$ for
	all $a,b\in\Gamma(A)$ and $u_\pm\in\Gamma(U_\pm)$. 
	According to Theorem \ref{thm_cplx_VB_Dirac} these conditions 
	are equivalent to $D_\pm$ being complex LA-Dirac structures. 
\end{proof}

\subsection{The degenerate generalised complex structure on \texorpdfstring{$A\oplus T^*M$}{A+T*M}}
\label{sec_deg_gc_ca}

Recall that the vector bundle $A\oplus T^*M$ can be equipped with 
the structure of a degenerate Courant algebroid as described 
in \cite{Jotz19a}. The anchor is given by 
$\rho\circ\pr_A\colon A\oplus T^*M\to TM$, the (possibly degenerate) 
pairing and the bracket are given by
\begin{equation}\label{eq_deg_CA_pair_bra}
\begin{split}
	\bigpair{(a,\theta)}{(b,\eta)}_d
	:=&\pair{\rho(a)}{\eta}+\pair{\rho(b)}{\theta}\,,\\
	\bigDMbra{(a,\theta)}{(b,\eta)}_d
	:=&\bigl([a,b],\LL_{\rho(a)}\eta-i_{\rho(b)}\dd\theta\bigr)\,.
\end{split}
\end{equation}
where $a,b\in\Gamma(A)$ and $\theta,\eta\in\Gamma(T^*M)$.

This anchor, bracket and pairing satisfy all properties of 
a Courant algebroid except for the non-degeneracy of the pairing. 
As shown in \cite{Jotz19a}, the bracket can equivalently be described 
in terms of the Dorfman connection $\Delta$:
	\begin{equation}\label{eq_DCA_bra_DMC}
	\DMbra{\tau_1}{\tau_2}_d
	=\Delta_{(\rho,\rho^t)\tau_1}\tau_2
	-\nabla^\bas_{\pr_A \tau_2}\tau_1
      \end{equation}
       for 
$\tau_1,\tau_2\in\Gamma(A\oplus T^*M)$.

\begin{prop}\label{prop_jC_gcs_decCA}
	The core morphism $j_C\colon A\oplus T^*M \to A\oplus T^*M$
	of $\JJ$ satisfies $j_C^2=-\id$, is orthogonal with respect 
	to $\pair{\cdot}{\cdot}_d$ and the Nijenhuis torsion of 
	$j_C$ with respect to $\DMbra{\cdot}{\cdot}_d$ vanishes. 
	Hence, $j_C$ is a \textbf{degenerate generalised complex structure in
	the degenerate Courant algebroid $A\oplus T^*M$}.
\end{prop}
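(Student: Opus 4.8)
The plan is to verify the three defining conditions of a degenerate generalised complex structure one by one, using the complex-linearity data collected in Theorem \ref{cor_lgcs_LA}. The identity $j_C^2=-\id$ is immediate: Proposition \ref{prop_J_gacs} gives $j_C=-j^t$ and $j^2=-\id_{TM\oplus A^*}$, whence $j_C^2=(j^t)^2=(j^2)^t=-\id$. For orthogonality I would first rewrite the degenerate pairing through the canonical pairing between $TM\oplus A^*$ and $A\oplus T^*M$; a direct check gives $\pair{\tau_1}{\tau_2}_d=\pair{(\rho,\rho^t)\tau_1}{\tau_2}$ for all $\tau_1,\tau_2\in\Gamma(A\oplus T^*M)$. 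Using the generalised complex Lie algebroid identity $(\rho,\rho^t)\circ j_C=j\circ(\rho,\rho^t)$ of Theorem \ref{cor_lgcs_LA}, together with $j^t=-j_C$ and $j_C^2=-\id$, one then computes
\[
  \pair{j_C\tau_1}{j_C\tau_2}_d=\pair{j(\rho,\rho^t)\tau_1}{j_C\tau_2}=-\pair{(\rho,\rho^t)\tau_1}{j_C^2\tau_2}=\pair{(\rho,\rho^t)\tau_1}{\tau_2}=\pair{\tau_1}{\tau_2}_d,
\]
which is the required orthogonality.

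For the Nijenhuis torsion I would complexify and work with the $\pm i$-eigenbundles $K_\pm$ of $j_{C,\C}$. The torsion $N_{j_C}$ is tensorial---its tensoriality uses only the Leibniz rules of $\DMbra{\cdot}{\cdot}_d$ together with $j_C^2=-\id$ and the orthogonality just established, none of which requires non-degeneracy of $\pair{\cdot}{\cdot}_d$---so it suffices to evaluate its $\C$-bilinear extension on eigensections. A short eigenvalue computation shows that $N_{j_{C,\C}}$ vanishes identically on the mixed pairs $K_+\times K_-$ and $K_-\times K_+$, while on $K_+\times K_+$ it equals $4\,\pr_{K_-}\DMbra{\cdot}{\cdot}_{d,\C}$. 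Since $K_-=\overline{K_+}$ and $j_C$ is real, the vanishing of $N_{j_C}$ is thus equivalent to the involutivity of $K_+$ under $\DMbra{\cdot}{\cdot}_{d,\C}$.

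To establish this involutivity I would use the description \eqref{eq_DCA_bra_DMC}, namely $\DMbra{\tau_1}{\tau_2}_d=\Delta_{(\rho,\rho^t)\tau_1}\tau_2-\nabla^\bas_{\pr_A\tau_2}\tau_1$ for the adapted Dorfman connection $\Delta$, in its complexified form. Let $\tau_1,\tau_2\in\Gamma(K_+)$. For the first term, $(\rho,\rho^t)_\C\tau_1\in\Gamma(U_+)$ by $(\rho,\rho^t)\circ j_C=j\circ(\rho,\rho^t)$; moreover, since $\Delta$ is adapted to $\JJ$ (Proposition \ref{prop_adapted_DMC}) one has $\JJ\slift(\nu)=\slift(j\nu)$, so $\slift_\C(\nu)\in\Gamma(D_+)$ for $\nu\in\Gamma(U_+)$, and as $D_+$ is a Dirac structure (Corollary \ref{cor_lgcs_VB_Dirac}) closed under the Courant bracket of $\TT_\C A$ while $\DMbra{\slift(\nu)}{\tau^\uparrow}=(\Delta_\nu\tau)^\uparrow$, it follows that $\Delta^\C_u$ preserves the core $K_+$ for every $u\in\Gamma(U_+)$; hence the first term lies in $\Gamma(K_+)$. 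For the second term, the complex-linearity $\nabla^\bas_a\circ j_C=j_C\circ\nabla^\bas_a$ of Theorem \ref{cor_lgcs_LA} shows that $\nabla^{\bas,\C}$ preserves the eigenbundle $K_+$, so the second term lies in $\Gamma(K_+)$ as well. Therefore $\DMbra{\tau_1}{\tau_2}_{d,\C}\in\Gamma(K_+)$, $K_+$ is involutive, and $N_{j_C}=0$.

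The main obstacle is this Nijenhuis step: one must be confident that the reduction of $N_{j_C}=0$ to involutivity of a single eigenbundle survives the degeneracy of the pairing---which it does, since tensoriality only needs the Leibniz rules and the orthogonality proved above---and then assemble the adaptedness of $\Delta$ with the complex-linearity of the basic connection through the formula \eqref{eq_DCA_bra_DMC}. The crucial structural input, that $\Delta^\C_u$ preserves $K_+$ for $u\in\Gamma(U_+)$, is precisely the statement that the complexified adapted splitting restricts to the Dirac structure $D_+$, combined with $D_+$ being closed under the Courant bracket of $\TT_\C A$.
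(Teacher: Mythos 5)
Your proof is correct, but your treatment of the Nijenhuis torsion takes a genuinely different route from the paper's. The paper stays entirely real: it expands $N_{j_C,\DMbra{\cdot}{\cdot}_d}(\tau_1,\tau_2)$ using \eqref{eq_DCA_bra_DMC}, observes that all $\nabla^\bas$-terms cancel because $\nabla^\bas_a\circ j_C=j_C\circ\nabla^\bas_a$ (Theorem \ref{cor_lgcs_LA}), and recognises the four surviving $\Delta$-terms as exactly the expression \eqref{eq_Nij_lin_core_dual} with $\nu=(\rho,\rho^t)\tau_1$ and $\tau=\tau_2$, which vanishes for any linear generalised complex structure. You instead complexify, reduce $N_{j_C}=0$ to the involutivity of $K_+$ under $\DMbra{\cdot}{\cdot}_{d,\C}$, and verify involutivity term by term in \eqref{eq_DCA_bra_DMC}; note that your key input, that $\Delta^\C_u$ preserves $K_+$ for $u\in\Gamma(U_+)$ (which you extract from $D_+$ being a VB-Dirac structure via Corollary \ref{cor_lgcs_VB_Dirac}), is precisely the eigenbundle reformulation of \eqref{eq_Nij_lin_core_dual}, so the same integrability condition enters both arguments, just packaged through the Dirac picture rather than cited directly. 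The paper's computation is shorter and avoids complexification; your route is longer but buys more: the involutivity of $K_\pm$ you establish is exactly the content of the proposition immediately following this one in the paper (the complex Lie algebroid structure on $K_\pm$), and indeed the facts that $\Delta^\C_u$ preserves $K_\pm$ and $\nabla^{\bas,\C}$ preserves the eigenbundles are reused later in the proof that $J_\pm$ is a generalised complex structure in $C_\pm$. One small simplification: your care about tensoriality of $N_{j_C}$ is sound but not strictly needed for the reduction to eigensections, since any section of $(A\oplus T^*M)_\C$ splits smoothly into its $K_+$ and $K_-$ components, so bilinearity alone suffices.
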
 
\begin{proof}
  Recall from Proposition \ref{prop_J_gacs} that $j_C^2=-\id$ and
  $j=-j_C^t$. Together with the property
  $j\circ(\rho,\rho^t)=-(\rho,\rho^t)\circ j^t=(\rho,\rho^t)\circ
  j_C$, see Theorem \ref{cor_lgcs_LA}, it is easy to check that $j_C$
  is orthogonal with respect to the degenerate pairing.

  Theorem \ref{cor_lgcs_LA} gives as well the equality
  $\nabla_a^\bas \circ j_C=j_C\circ\nabla_a^\bas$. Using the formula
  \eqref{eq_DCA_bra_DMC} it is then easy to compute the Nijenhuis
  torsion of $j_C$:
	\[
		N_{j_C,\DMbra{\cdot}{\cdot}_d}(\tau_1,\tau_2)
		=\Delta_{(\rho,\rho^t)\tau_1}\tau_2
		-\Delta_{(\rho,\rho^t)j_C\tau_1}j_C\tau_2
		+j_C\Delta_{(\rho,\rho^t)j_C\tau_1}\tau_2
		+j_C\Delta_{(\rho,\rho^t)\tau_1}j_C\tau_2\,,
	\]
	which vanishes for any linear generalised complex 
	structure according to \eqref{eq_Nij_lin_core_dual} with 
	$\nu=(\rho,\rho^t)\tau_1$ and $\tau=\tau_2$.
      \end{proof}

      \begin{prop}
  Let $A\to M$ be a Lie algebroid.  The restriction of the degenerate
  Courant algebroid structure on $A_\C \oplus T^*_\C M$ induces a
  complex Lie algebroid structure on $K_\pm$.
\end{prop}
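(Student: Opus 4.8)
The plan is to read off the complex Lie algebroid structure on $K_\pm$ directly from the fact, established in Proposition \ref{prop_jC_gcs_decCA}, that the core morphism $j_C$ is a degenerate generalised complex structure in the degenerate Courant algebroid $(A\oplus T^*M, \rho\circ\pr_A, \pair{\cdot}{\cdot}_d, \DMbra{\cdot}{\cdot}_d)$ of \eqref{eq_deg_CA_pair_bra}. Complexifying every structure map, $A_\C\oplus T^*_\C M$ carries the $\C$-bilinear bracket $\DMbra{\cdot}{\cdot}_d$, the $\C$-bilinear pairing $\pair{\cdot}{\cdot}_d$ and the anchor $(\rho\circ\pr_A)_\C$, still satisfying the three (degenerate) Courant axioms by the remark following \eqref{eq_deg_CA_pair_bra}. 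Here $K_\pm$ are the $\pm i$-eigenbundles of $j_{C,\C}$; since $i$ and $-i$ are distinct constant eigenvalues of the endomorphism $j_{C,\C}$, these are honest subbundles over $M$, with $A_\C\oplus T^*_\C M=K_+\oplus K_-$.

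First I would show that $K_\pm$ is isotropic for $\pair{\cdot}{\cdot}_d$. The orthogonality of $j_C$ complexifies to $\pair{j_{C,\C}\tau_1}{j_{C,\C}\tau_2}_d=\pair{\tau_1}{\tau_2}_d$; evaluating on $\tau_1,\tau_2\in\Gamma(K_\pm)$ and using $j_{C,\C}\tau_k=\pm i\tau_k$ with $\C$-bilinearity gives $-\pair{\tau_1}{\tau_2}_d=\pair{\tau_1}{\tau_2}_d$, hence $\pair{\tau_1}{\tau_2}_d=0$. Next I would show $K_\pm$ is involutive. Evaluating the complexified (vanishing) Nijenhuis tensor of $j_C$ on $\tau_1,\tau_2\in\Gamma(K_\pm)$ and again substituting $j_{C,\C}\tau_k=\pm i\tau_k$ collapses the four terms to $2\DMbra{\tau_1}{\tau_2}_d\pm 2i\,j_{C,\C}\DMbra{\tau_1}{\tau_2}_d=0$, i.e.~$j_{C,\C}\DMbra{\tau_1}{\tau_2}_d=\pm i\DMbra{\tau_1}{\tau_2}_d$, so $\DMbra{\tau_1}{\tau_2}_d\in\Gamma(K_\pm)$ and the bracket restricts to $K_\pm$.

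It then remains to verify that the restricted bracket, together with the anchor $\lambda_\pm:=(\rho\circ\pr_A)_\C|_{K_\pm}\colon K_\pm\to T_\C M$, is a complex Lie algebroid as in Definition \ref{def_qrca}. The restricted bracket is $\C$-bilinear and $\lambda_\pm$ is $\C$-linear, both as restrictions of $\C$-linear data to a $\C$-subbundle. The Leibniz rule $\DMbra{\tau_1}{f\tau_2}_d=\lambda_\pm(\tau_1)(f)\tau_2+f\DMbra{\tau_1}{\tau_2}_d$ for $f\in C^\infty(M,\C)$ is inherited from the corresponding Leibniz rule of the degenerate Courant bracket, which a short computation from \eqref{eq_deg_CA_pair_bra} confirms. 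For skew-symmetry I would use the complexification of the third Courant axiom: a brief Cartan-calculus computation on \eqref{eq_deg_CA_pair_bra} identifies the symmetric part as $\DMbra{\tau_1}{\tau_2}_d+\DMbra{\tau_2}{\tau_1}_d=(0,\dd\pair{\tau_1}{\tau_2}_d)$, whose right-hand side vanishes on $K_\pm$ by the isotropy established above. Finally, the Loday--Jacobi identity (first Courant axiom) together with this skew-symmetry yields the genuine Jacobi identity on $\Gamma(K_\pm)$, and the compatibility $\lambda_\pm\DMbra{\tau_1}{\tau_2}_d=[\lambda_\pm\tau_1,\lambda_\pm\tau_2]$ then follows automatically, exactly as for an ordinary Courant algebroid. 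Thus $(K_\pm,\lambda_\pm,\DMbra{\cdot}{\cdot}_d|_{K_\pm})$ is a complex Lie algebroid.

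The one genuinely delicate point, and the step I would treat with most care, is that the pairing is permitted to be degenerate, so one cannot simply invoke ``Dirac structures are Lie algebroids''. The argument instead relies only on isotropy of $K_\pm$ (never maximality) to pass from the symmetric-part identity to skew-symmetry, and uses the three degenerate Courant axioms as stated; at no point is the inverse of the pairing, or the identification of the bundle with its dual, required. Everything else is the standard eigenbundle bookkeeping, made clean by working with an adapted Dorfman connection as in Proposition \ref{prop_jC_gcs_decCA}.
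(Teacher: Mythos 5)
Your proof is correct and takes essentially the same route as the paper's: both rest on Proposition \ref{prop_jC_gcs_decCA}, deduce that the complexified bracket restricts to $K_\pm$ from the vanishing Nijenhuis torsion and $\C$-bilinearity, and obtain skew-symmetry of the restricted bracket from isotropy of $K_\pm$ with respect to the degenerate pairing. The only step you handle differently is isotropy: you get it directly from the orthogonality of $j_{C,\C}$ (the eigenvalue computation $-\pair{\tau_1}{\tau_2}_d=\pair{\tau_1}{\tau_2}_d$), whereas the paper invokes that $(\rho,\rho^t)_\C$ maps $K_\pm$ into $U_\pm=K_\pm^\circ$ and that the degenerate pairing factors through $(\rho,\rho^t)$; your version is marginally more self-contained, needing only the data of Proposition \ref{prop_jC_gcs_decCA} rather than the anchor compatibility of Theorem \ref{cor_lgcs_LA}. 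Your additional verifications (Leibniz rule, the symmetric-part identity $\DMbra{\tau_1}{\tau_2}_d+\DMbra{\tau_2}{\tau_1}_d=(0,\dd\pair{\tau_1}{\tau_2}_d)$, Jacobi from the Loday identity plus skew-symmetry) are precisely the details the paper leaves implicit, and your observation that only isotropy, never maximality or non-degeneracy, is used is exactly the right point to stress in the degenerate setting.
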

\begin{proof}
  According to Proposition
  \ref{prop_jC_gcs_decCA} the morphism $j_C$ is a generalised complex
  structure in $A\oplus T^*M$.  The vanishing of the Nijenhuis tensor
  and $\C$-linearity of the complexified bracket imply that the
  bracket restricts to the $\pm i$-eigenbundle $K_\pm$ of
  $j_{C,\C}$. Since $(\rho,\rho^t)_\C$ sends $K_\pm$ to
  $U_\pm=K_\pm^\circ$, the pairing restricted to $K_\pm$ vanishes and
  thus the restricted bracket is skew-symmetric and defines a Lie
  algebroid structure on $K_\pm$.
\end{proof}

\subsection{The complex \texorpdfstring{$A$}{A}-Manin pair}
\cite{Jotz19a} defines $A$-Manin pairs for a given Lie algebroid $A$
over $M$ and constructs an equivalence between $A$-Manin pairs and
Dirac bialgebroids over $A$. Again, the complex linear extension of
these results is straightforward.

\begin{mydef}
  Let $A\to M$ be a Lie algebroid.  A \textbf{complex $A$-Manin pair}
  consists of a complex Courant algebroid $C$ over $M$, a complex
  Dirac structure $U\rightarrow M$ in $C$, with a morphism
  $\iota\colon U\hookrightarrow T_\C M\oplus A_\C^*$ such that
  $\rho_U=\pr_{T_\C M}\circ\iota$ and a morphism of (degenerate)
  complex Courant algebroids
  $\Phi\colon A_\C\oplus T_\C^* M\rightarrow C$ such that
	\[
		\Phi(A_\C \oplus T_\C^* M)+U=C
	\] 
	and $\pair{u}{\Phi(\tau)}_C=\pair{\iota(u)}{\tau}$ 
	for all $(u,\tau)\in U\times_M (A_\C\oplus T_\C^*M)$.
\end{mydef}

\cite{Jotz19a} shows that the Courant algebroid structure on $C$ and
the morphism $\Phi$ can be recovered from the Lie algebroid structures
on $A$, $U$ and $\iota$. All the arguments can be extended complex
linearly to obtain the following straightforward consequence.

\begin{prop}\label{prop_Cpm}
  Let $U_\pm$ be the $\pm i$-eigenbundles of the side morphism $j$ of
  a generalised complex structure $\JJ$ on a Lie algebroid $A\to M$,
  and let $K_\pm=U_\mp^\circ$. Define
	\[
		C_{\pm}:=
		\frac{U_{\pm}\oplus(A_\C \oplus T_\C^*M)}
		{\grap\bigl(-(\rho,\rho^t)_\C\an{K_{\pm}}\bigr)
		}\,,
	\]
	and define an anchor map, a $\C$-bilinear pairing and a bracket
	as follows. For $u,u_1,u_2\in\Gamma(U_\pm)$, 
	$\tau,\tau_1,\tau_2\in\Gamma(A_\C\oplus T^*_\C M)$ 
	define the anchor by
	\[
		c_\pm(u\oplus \tau):=\rho_{U_\pm}(u)+(\rho_A)_\C\circ \pr_{A_\C}\tau\,,
	\]
	the pairing by
	\begin{equation}\label{eq_Cpm_pair}
		\pair{u_1\oplus \tau_1}{u_2\oplus \tau_2}_{C_\pm}
		:=\pair{u_1}{\tau_2}+\pair{u_2}{\tau_1}
		+\pair{\tau_1}{(\rho,\rho^t)_\C(\tau_2)}\,,
	\end{equation}
	and the bracket by
	\begin{equation}\label{eq_Cpm_bra}
	\begin{split}
	\DMbra{u_1\oplus \tau_1}{u_2\oplus \tau_2}_{C_\pm}
		&:=\Bigl([u_1,u_2]_{U_{\pm}}
		+\nabla^{\bas,\C}_{\pr_{A_\C} \tau_1}u_2
		-\nabla^{\bas,\C}_{\pr_{A_\C} \tau_2}u_1\Bigr)\\
		&\oplus\Bigl(\DMbra{\tau_1}{\tau_2}_{d,\C}
		+\Delta_{u_1}^{\C}\tau_2-\Delta_{u_2}^{\C}\tau_1
		+\bigl(0,\dd_\C\pair{\tau_1}{u_2}\bigr)\Bigr)\,.
	\end{split}
	\end{equation}
	Then $C_\pm$ are both complex Courant algebroids and 
	$(C_{\pm},U_{\pm})$ together with 
	$\iota\colon U_\pm\hookrightarrow T_\C M\oplus A_\C^*$ 
	and $\Phi\colon A_\C\oplus T_\C^* M\rightarrow C$ the canonical 
	inclusions are complex $A$-Manin pairs. 
\end{prop}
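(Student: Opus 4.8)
The plan is to exhibit Proposition \ref{prop_Cpm} as the complex-linear counterpart of the $A$-Manin pair construction of \cite{Jotz19a}, so that the Courant algebroid axioms need not be reverified by hand. First I would collect the data that this construction consumes. By Proposition \ref{prop_AAA_U_lie} the eigenbundle $U_\pm$ is a complex Lie algebroid with anchor $\rho_{U_\pm}=\pr_{T_\C M}|_{U_\pm}$ and bracket $[\cdot\,,\cdot]_{U_\pm}$; the complexification of the degenerate Courant algebroid of Section \ref{sec_deg_gc_ca} makes $A_\C\oplus T^*_\C M$ a degenerate complex Courant algebroid with structure maps \eqref{eq_deg_CA_pair_bra}; and, since $(\rho,\rho^t)_\C$ is $\C$-linear for $-j^t$ on $A_\C\oplus T^*_\C M$ and $j$ on $T_\C M\oplus A^*_\C$ (Theorem \ref{cor_lgcs_LA}), it carries the $\pm i$-eigenbundle $K_\pm$ of $j_{C,\C}$ into the $\pm i$-eigenbundle $U_\pm$ of $j_\C$, with $K_\pm=U_\pm^\circ$ by Lemma \ref{lem_ann_U}. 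The anchor compatibility $\rho_{U_\pm}=\pr_{T_\C M}\circ\iota$ for the inclusion $\iota\colon U_\pm\hookrightarrow T_\C M\oplus A^*_\C$ is immediate.

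The technical core is that the anchor $c_\pm$, the pairing \eqref{eq_Cpm_pair} and the bracket \eqref{eq_Cpm_bra}, all defined a priori on $U_\pm\oplus(A_\C\oplus T^*_\C M)$, descend along the quotient by $N:=\grap(-(\rho,\rho^t)_\C|_{K_\pm})$. That $c_\pm$ kills $N$ is a one-line check, since $\rho_{U_\pm}\circ(\rho,\rho^t)_\C=(\rho_A)_\C\circ\pr_{A_\C}$ on $A_\C\oplus T^*_\C M$. For the pairing I would show that the radical of \eqref{eq_Cpm_pair} is exactly $N$: putting $\tau_2=0$ forces the $(A_\C\oplus T^*_\C M)$-component of a radical vector into $U_\pm^\circ=K_\pm$, and then putting $u_2=0$, together with the identity $\pair{\tau_1}{(\rho,\rho^t)_\C\tau_2}=\pair{\tau_1}{\tau_2}_{d,\C}$ read off \eqref{eq_deg_CA_pair_bra}, forces its $U_\pm$-component to equal $-(\rho,\rho^t)_\C\tau_1$; conversely $N$ lies in the radical because the same identity collapses the pairing of a generator of $N$ with an arbitrary vector to $\pair{u}{k}=0$ (Lemma \ref{lem_ann_U}). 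Thus \eqref{eq_Cpm_pair} descends to a non-degenerate $\C$-bilinear pairing on $C_\pm$ in one stroke.

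The main obstacle is the bracket. I would verify that \eqref{eq_Cpm_bra}, evaluated on representatives, is unchanged modulo $N$ when either argument is altered by a section of $N$, and that the three Courant axioms hold. The inputs are exactly the complex-linearity statements of Theorem \ref{cor_lgcs_LA}: $(\rho,\rho^t)_\C$, $\nabla^{\bas,\C}$ and $R^\bas_{\Delta^\C}$ are all $\C$-linear for $j$ and $-j^t$, so $K_\pm$ is $\bigDMbra{\cdot}{\cdot}_{d,\C}$-closed (the previous proposition) and $\Delta^\C$, $\nabla^{\bas,\C}$ preserve the eigenbundle decompositions. Feeding these into \eqref{eq_Cpm_bra}, and using \eqref{eq_DCA_bra_DMC} to rewrite $\bigDMbra{\cdot}{\cdot}_{d,\C}$ through $\Delta^\C$ and $\nabla^{\bas,\C}$, reduces the well-definedness and the axioms to the corresponding real identities already established in \cite{Jotz19a}; since these are polynomial in the structure maps they extend verbatim over $\C$. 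This is where essentially all of the work sits, but none of it is new: it is the $A$-Manin pair computation of \cite{Jotz19a} run with $U=U_\pm$ and all maps complexified.

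It then remains to check the $A$-Manin pair axioms. The image of $U_\pm\oplus 0$ in $C_\pm$ is isotropic by \eqref{eq_Cpm_pair} with $\tau_1=\tau_2=0$, involutive by \eqref{eq_Cpm_bra} with $\tau_1=\tau_2=0$ (which returns $[u_1,u_2]_{U_\pm}\oplus 0$), and maximal isotropic, as it injects into $C_\pm$ (meeting $N$ trivially) with half the rank of $C_\pm$; hence it is a complex Dirac structure in $C_\pm$. Taking $\iota$ as the given inclusion and $\Phi\colon A_\C\oplus T^*_\C M\to C_\pm$, $\tau\mapsto[\,0\oplus\tau\,]$, the spanning condition $\Phi(A_\C\oplus T^*_\C M)+U_\pm=C_\pm$ is clear since $U_\pm\oplus(A_\C\oplus T^*_\C M)$ surjects onto $C_\pm$, and $\Phi$ is a morphism of degenerate complex Courant algebroids by inspecting \eqref{eq_Cpm_bra} and \eqref{eq_Cpm_pair} on the second summand. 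Finally $\pair{u}{\Phi(\tau)}_{C_\pm}=\pair{\iota(u)}{\tau}$ is \eqref{eq_Cpm_pair} evaluated at $u_1=u$, $\tau_1=0$, $u_2=0$, $\tau_2=\tau$, completing the identification of $(C_\pm,U_\pm)$ as complex $A$-Manin pairs.
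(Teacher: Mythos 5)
Your proposal is correct and takes essentially the same route as the paper, which offers no written proof at all: it declares the proposition a ``straightforward consequence'' of extending the $A$-Manin pair construction of \cite{Jotz19a} complex-linearly, and that is precisely the reduction you carry out, with the quotient well-definedness, the radical computation, and the Manin-pair axioms spelled out. One remark: you use $K_\pm=U_\pm^\circ$ (Lemma \ref{lem_ann_U}), which silently corrects the typo $K_\pm=U_\mp^\circ$ in the proposition's statement and is the reading consistent with the rest of the paper (and the one under which the pairing \eqref{eq_Cpm_pair} actually descends to the quotient).
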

Recall that $U_+$ with its complex Lie algebroid structure is
isomorphic to the complex Lie algebroid
$(TM\oplus A^*, j, \rho_j,\AAA)$ (see Proposition
\ref{prop_AAA_U_lie}). Hence the result above realises the latter
Courant algebroid as a Dirac structure in the complex Courant
algebroid $C_+$.

Next, the generalised complex structure on $A$ induces generalised
complex structures $J_\pm$ in the Courant algebroids $C_\pm$ defined
by Proposition \ref{prop_Cpm}.
\begin{prop}
	Let $u\oplus \tau\in\Gamma(C_\pm)$. Then
	\[
		J_\pm(u\oplus \tau):=j_\C u\oplus j_{C,\C}\tau
	\]
	is well-defined and a generalised complex structure in $C_\pm$. 
\end{prop}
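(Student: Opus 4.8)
The plan is to check that $J_\pm=j_\C\oplus j_{C,\C}$ descends to $C_\pm$ and then verify the three defining properties of a generalised complex structure in the complex Courant algebroid $C_\pm$: $J_\pm^2=-\id$, orthogonality, and vanishing of the Nijenhuis tensor. Well-definedness is immediate, since the subspace quotiented out, $\grap(-(\rho,\rho^t)_\C|_{K_\pm})=\{(-(\rho,\rho^t)_\C k,\,k)\mid k\in K_\pm\}$, lies in $U_\pm\oplus K_\pm$, and $j_\C$ acts by $\pm i$ on $U_\pm$ while $j_{C,\C}$ acts by $\pm i$ on $K_\pm$; hence $J_\pm$ multiplies each generator of the graph by $\pm i$ and preserves it. Likewise $J_\pm^2=-\id$ is a direct consequence of $j^2=-\id_{TM\oplus A^*}$ and $j_C^2=-\id_{A\oplus T^*M}$ (Proposition \ref{prop_J_gacs}), which complexify to $j_\C^2=-\id$ and $j_{C,\C}^2=-\id$.

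Since $J_\pm$ is $\C$-linear with $J_\pm^2=-\id$, it decomposes $C_\pm$ into its $\pm i$-eigenbundles $L_+\oplus L_-$; because the bracket on $C_\pm$ is $\C$-bilinear, the usual algebra shows that $J_\pm$ is a generalised complex structure precisely when both $L_+$ and $L_-$ are Dirac structures (orthogonality being equivalent to isotropy of the eigenbundles, and vanishing of the Nijenhuis tensor to their involutivity). I would treat $C_+$ explicitly, the argument for $C_-$ being identical after exchanging $+$ and $-$. Writing $A_\C\oplus T^*_\C M=K_+\oplus K_-$ and using $j_\C|_{U_+}=i\cdot\id$, one reads off that $L_+=\operatorname{im}(U_+\oplus K_+)$ and $L_-=\operatorname{im}(0\oplus K_-)$; these are transverse and sum to $C_+$ because the quotiented graph sits inside $U_+\oplus K_+$ and $K_+\cap K_-=0$.

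Isotropy of $L_\pm$ is then a short calculation with \eqref{eq_Cpm_pair}: each of its three terms pairs an element of $K_\pm$ against an element of $U_\pm$, and these vanish by $K_\pm=U_\pm^\circ$ (Lemma \ref{lem_ann_U}) together with $(\rho,\rho^t)_\C(K_\pm)\subseteq U_\pm$; this already yields the orthogonality of $J_\pm$. For involutivity I would feed the bracket formula \eqref{eq_Cpm_bra} the following established facts: $U_\pm$ is a complex Lie subalgebroid (the Dirac side of the Manin pair $(C_\pm,U_\pm)$ of Proposition \ref{prop_Cpm}), $K_\pm$ is a complex Lie algebroid for $\DMbra{\cdot}{\cdot}_{d,\C}$ (the proposition preceding Proposition \ref{prop_Cpm}), and by Theorem \ref{cor_lgcs_LA} the basic connection $\nabla^{\bas,\C}_a$ is complex linear, hence preserves the eigenbundles $U_\pm$ of $j_\C$. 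The one remaining ingredient is that $\Delta^\C_u$ preserves $K_\pm$ for $u\in U_\pm$; this follows from the adapted-splitting identity \eqref{eq_Nij_lin_core_dual} upon substituting $j_\C u=\pm i\,u$ and $j_{C,\C}\tau=\pm i\,\tau$, which collapses that identity to $j_{C,\C}(\Delta^\C_u\tau)=\pm i\,\Delta^\C_u\tau$. Inserting these facts into \eqref{eq_Cpm_bra} shows that for sections of $L_+$ the first bracket component stays in $U_+$ and the second in $K_+$, while the correction term $(0,\dd_\C\pair{\tau_1}{u_2})$ vanishes because $\pair{K_+}{U_+}=0$; the analogous but simpler computation gives involutivity of $L_-$.

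I expect the involutivity of $L_+$ to be the main obstacle, and within it the verification that the mixed $\Delta^\C$- and $\nabla^{\bas,\C}$-terms of \eqref{eq_Cpm_bra} respect the eigenbundle splitting. The decisive inputs are the Dorfman-connection identity \eqref{eq_Nij_lin_core_dual}, which forces $\Delta^\C_u K_\pm\subseteq K_\pm$ for $u\in U_\pm$, and the annihilator relation $K_\pm=U_\pm^\circ$, which kills the $\dd_\C$-correction term. Once $L_+$ and $L_-$ are exhibited as transverse Dirac structures, $J_\pm$ is a generalised complex structure in $C_\pm$, and interchanging $+$ and $-$ settles $C_-$.
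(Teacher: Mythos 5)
Your proposal is correct, but for the two substantive conditions it takes a genuinely different route from the paper. The paper verifies orthogonality by a direct computation from $j^2=-1$, $j_\C^t=-j_{C,\C}$ and $(\rho,\rho^t)_\C\circ j_{C,\C}=j_\C\circ(\rho,\rho^t)_\C$, and then disposes of the Nijenhuis tensor by what it itself calls ``lengthy, but straightforward computations'' with the bracket \eqref{eq_Cpm_bra}, using exactly the facts you isolate: that $\nabla^{\bas,\C}$ preserves $U_\pm$, that $\Delta^\C_u$ preserves $K_\pm$, and that $N_{j_C,\DMbra{\cdot}{\cdot}_d}=0$ (Proposition \ref{prop_jC_gcs_decCA}). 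You instead invoke the eigenbundle--Dirac dictionary, which is legitimate here because $J_\pm$ is $\C$-linear on the already-complex Courant algebroid $C_\pm$, so the $\pm i$-eigenbundles exist without further complexification and the mixed terms of $N_{J_\pm}$ vanish by pure eigenvalue algebra; you then identify these eigenbundles concretely as $\operatorname{im}(U_+\oplus K_+)$ and $\operatorname{im}(0\oplus K_-)$ in $C_+$. Your key steps check out: isotropy follows from $K_\pm=U_\pm^\circ$ (Lemma \ref{lem_ann_U}) together with $(\rho,\rho^t)_\C(K_\pm)\subseteq U_\pm$ (the Glanon compatibility of Theorem \ref{cor_lgcs_LA}), and your derivation of $\Delta^\C_uK_\pm\subseteq K_\pm$ for $u\in\Gamma(U_\pm)$ by specialising \eqref{eq_Nij_lin_core_dual} to eigensections makes explicit a fact the paper uses as ``previously proven'' without displaying the argument. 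What your route buys: the lengthy Nijenhuis expansion disappears, and, since the quotiented graph lets one absorb $K_+$ into $U_+$, your $L_+$ is precisely the image of $U_+$ and $L_-$ the image of $K_-$, so you have in effect already exhibited $U_\pm$ and $K_\mp$ as transverse Dirac structures in $C_\pm$ -- which is the substance of Theorem \ref{thm_iso_CA} in the following subsection, so your argument dovetails with (and partially pre-proves) the Lie bialgebroid statement. What the paper's route buys: it stays entirely at the level of Definitions \ref{def_gacs} and \ref{def_gcs}, needing no appeal to the equivalence between generalised complex structures and pairs of transverse Dirac structures in the complex-Courant-algebroid setting.
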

\begin{proof}
	Take any element $(-(\rho,\rho^t)_\C k)\oplus k$ of 
	$\grap\bigl(-(\rho,\rho^t)_\C\an{K_{\pm}}\bigr)$. Then
	\begin{equation*}
	J_\pm\bigl((-(\rho,\rho^t)_\C k)\oplus k\bigr)
	=\pm i\bigl((-(\rho,\rho^t)_\C k)\oplus k\bigr)\,,
	\end{equation*}
	which is again an element of 
	$\grap(-(\rho,\rho^t)\an{K_{\pm}})$. Thus the map $J_\pm$
	is well-defined on $C_\pm$. 
It is clear that $J_\pm^2=-1$. Orthogonality follows from an 
	easy computation using $j^2=-1$, $j_\C^t=-j_{C,\C}$ 
	and 	$(\rho,\rho^t)_\C\circ j_{C,\C}=j_\C\circ(\rho,\rho^t)_\C$.
	
	The last remaining condition is the vanishing of the Nijenhuis 
	torsion of $J_\pm$ with respect to the bracket on $C_\pm$.
	Lengthy, but straightforward computations making use of the previously
	proven facts that $\nabla^{\bas,\C}$ preserves $U_\pm$, $\Delta^\C_u$ 
	preserves $K_\pm$ and that the Nijenhuis torsion of $j_C$ with
	respect to $\DMbra{\cdot}{\cdot}_d$ vanishes (Proposition 
	\ref{prop_jC_gcs_decCA}), establish this condition. Hence $J_\pm$ 
	defines a generalised complex structure in $C_\pm$. 
\end{proof}

\subsection{The Lie bialgebroid \texorpdfstring{$(U_\pm,K_\mp)$}{(U,K)}; proof of Theorem \ref{main2}}
\label{sec_Drinfeld}

This section shows that the pair $(U_\pm,K_\mp)$ forms a Lie
bialgebroid with Drinfeld double Courant algebroid isomorphic to
$C_\pm$. First, observe that the identity $U_\pm^\circ=K_\pm$ induces
isomorphisms $U_\pm^*\cong K_\mp$ and $K_\pm^*\cong U_\mp$.  The
following theorem establishes then Theorem \ref{main2}, since $U_+$
with its complex Lie algebroid structure is isomorphic to the complex
Lie algebroid $(TM\oplus A^*, j, \rho_j,\AAA)$ (see Proposition
\ref{prop_AAA_U_lie}).
\begin{thm}\label{thm_iso_CA}
  Let $(A\to M, \JJ)$ be a generalised complex Lie algebroid. There is
  an isomorphism of vector bundles
	\begin{equation}\label{eq_iso_CA}
	\begin{split}
		F\colon U_\pm\oplus K_\mp&\to C_\pm, \qquad 
		(u,k) \mapsto u\oplus k.
	\end{split}
	\end{equation}
	This equips $U_\pm\oplus K_\mp$ with the structure of a
        Courant algebroid, in which the complex Lie algebroids $U_\pm$
        and $ K_\mp$ are transversal Dirac structures. Thus the pair
        $(U_\pm,K_\mp)$ is a complex Lie bialgebroid. $F$ is an
        isomorphism of Courant algebroids where $U_\pm\oplus K_\mp$ is
        the Drinfeld double Courant algebroid of this Lie bialgebroid.
\end{thm}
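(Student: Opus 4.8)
The plan is to establish the statement in four moves: show that $F$ is a fibrewise isomorphism, transport the Courant algebroid structure of $C_\pm$ along $F$, check that $U_\pm$ and $K_\mp$ sit inside $C_\pm$ as a pair of transversal Lagrangian subbundles closed under the bracket, and finally invoke the standard equivalence between Courant algebroids carrying a pair of transversal Dirac structures and Lie bialgebroids.

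First I would verify that $F$ is an isomorphism by a dimension count together with injectivity. Since $j$ is a complex structure on $TM\oplus A^*$, the eigenbundles $U_\pm$ and $K_\pm$ all have complex rank $\frac{1}{2}\operatorname{rank}(TM\oplus A^*)$, so $\dim_\C C_\pm=\dim_\C U_\pm+\dim_\C(A_\C\oplus T_\C^*M)-\dim_\C K_\pm=\operatorname{rank}(TM\oplus A^*)=\dim_\C(U_\pm\oplus K_\mp)$. For injectivity, if $[u\oplus k]=0$ in $C_\pm$ then $u\oplus k=(-(\rho,\rho^t)_\C k')\oplus k'$ for some $k'\in K_\pm$; comparing the second components forces $k=k'\in K_\mp\cap K_\pm=0$, hence $k=0$ and $u=0$. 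This identifies $U_\pm$ with $\{[u\oplus 0]\}$ and $K_\mp$ with $\{[0\oplus k]\}$ as complementary subbundles of $C_\pm$.

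Next I would analyse the pairing \eqref{eq_Cpm_pair}. The key observation is that the term $\pair{\tau_1}{(\rho,\rho^t)_\C\tau_2}$ is exactly the complexified degenerate pairing $\pair{\tau_1}{\tau_2}_{d,\C}$ of \eqref{eq_deg_CA_pair_bra}. On $U_\pm$ the pairing then vanishes identically, so $U_\pm$ is isotropic; on $K_\mp$ it equals $\pair{k_1}{(\rho,\rho^t)_\C k_2}$, and since $(\rho,\rho^t)\circ j_C=j\circ(\rho,\rho^t)$ (Theorem \ref{cor_lgcs_LA}) the map $(\rho,\rho^t)_\C$ sends $K_\mp$ into $U_\mp=K_\mp^\circ$, so this also vanishes and $K_\mp$ is isotropic. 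Because $K_\mp=U_\mp^\circ$ by Lemma \ref{lem_ann_U}, the induced pairing between $U_\pm$ and $K_\mp$ is perfect, and the dimension count makes both maximal isotropic and transversal. For involutivity I would read off the bracket \eqref{eq_Cpm_bra}: setting $\tau_i=0$ gives $\DMbra{u_1\oplus 0}{u_2\oplus 0}_{C_\pm}=[u_1,u_2]_{U_\pm}\oplus 0$, so $U_\pm$ is a subalgebroid with its intrinsic bracket; setting $u_i=0$ gives $\DMbra{0\oplus k_1}{0\oplus k_2}_{C_\pm}=0\oplus\DMbra{k_1}{k_2}_{d,\C}$, which lies in $K_\mp$ because the complexified degenerate bracket preserves $K_\mp$ (vanishing Nijenhuis torsion of $j_C$ and $\C$-linearity, Proposition \ref{prop_jC_gcs_decCA}) and recovers the complex Lie algebroid structure on $K_\mp$.

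Having shown that $U_\pm$ and $K_\mp$ are transversal Dirac structures in the complex Courant algebroid $C_\pm$, I would conclude with the general correspondence of Liu--Weinstein--Xu: a Courant algebroid that splits as a direct sum of two transversal Dirac structures $L$ and $L'$ in perfect duality is the Drinfeld double of the Lie bialgebroid $(L,L')$ with $L'\cong L^*$. Applied to $L=U_\pm$, $L'=K_\mp$ and transported along $F$, this yields that $U_\pm\oplus K_\mp$ carries the Drinfeld double Courant algebroid structure and that $F$ is an isomorphism of Courant algebroids. The main obstacle is the bookkeeping of the previous paragraph, namely recognizing the $(\rho,\rho^t)$-term of the pairing as the degenerate pairing and tracking which eigenbundle $(\rho,\rho^t)_\C$ lands in; all the genuinely nontrivial content, the well-definedness of the structure maps on the quotient $C_\pm$ and the vanishing of the Nijenhuis torsion of $j_C$, has already been established in the preceding propositions.
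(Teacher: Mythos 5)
Your proof is correct and structurally parallel to the paper's: both establish that $F$ is a vector bundle isomorphism, that $U_\pm\oplus 0$ and $0\oplus K_\mp$ are transversal Dirac structures in $C_\pm$ inheriting their given Lie algebroid brackets, and both conclude via Liu--Weinstein--Xu \cite{LiWeXu97}. Two differences are worth recording. For bijectivity of $F$ you use a rank count together with injectivity (via $K_+\cap K_-=0$), whereas the paper exhibits the explicit inverse $u\oplus\tau\mapsto\bigl(u+\half(\rho,\rho^t)_\C(\tau\mp ij_\C\tau),\half(\tau\pm ij_\C\tau)\bigr)$; both work. The substantive difference is the last step. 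The paper cites \cite{LiWeXu97} only for the implication that two transversal Dirac structures form a Lie bialgebroid, and then checks by hand that the anchor, pairing and bracket of $C_\pm$ agree with those of the Drinfeld double; the nontrivial part is the mixed bracket, computed from \eqref{eq_DCA_bra_DMC} as $\DMbra{u\oplus 0}{0\oplus k}_{C_\pm}=-\nabla^{\bas,\C}_{\pr_{A_\C}k}u\oplus\Delta^\C_u k=-\iota_k\dd_K u\oplus\LL^U_u k$, which is exactly the double bracket. You instead invoke the uniqueness (reconstruction) half of the Manin-triple correspondence: in any Courant algebroid the mixed bracket of sections of two transversal Dirac structures is forced by the axioms (pairing against each summand determines both components), so once the pairing is identified with the duality pairing and the restricted brackets with the two Lie algebroid brackets, the ambient structure must be the Drinfeld double. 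That appeal is legitimate and spares you the computation, but it delivers the conclusion only because you carefully identified the pairing -- recognizing $\pair{\tau_1}{(\rho,\rho^t)_\C\tau_2}$ as the complexified degenerate pairing and using $(\rho,\rho^t)_\C(K_\mp)\subseteq U_\mp=K_\mp^\circ$ (Lemma \ref{lem_ann_U}, Theorem \ref{cor_lgcs_LA}) -- which is precisely the paper's maximal-isotropy step. In short: same skeleton, with the paper's explicit mixed-bracket verification traded for the uniqueness statement implicit in \cite{LiWeXu97}.
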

\begin{proof}
	It is easy to verify that 
	\begin{equation*}
		u\oplus \tau \mapsto 
		\Bigl(u+\half(\rho,\rho^t)_\C\bigl(\tau\mp ij_\C\tau\bigr),
		\half(\tau\pm ij_\C\tau)\Bigr)\,.
	\end{equation*}
	is well-defined and defines an inverse to $F$.
	
	The Courant algebroid structure of $C_\pm$ induces via this
        isomorphism a Courant algebroid structure on the bundle
        $U_\pm\oplus K_\mp$. The following shows that the Lie
        algebroids $U_\pm$ and $K_\mp$ are Dirac structures in
        $C_\pm$. Liu, Weinstein and Xu showed in \cite{LiWeXu97} that
        two transversal Dirac structures in a Courant algebroid are
        equivalent to a Lie bialgebroid. Thus $(U_\pm,K_\mp)$ is a Lie
        bialgebroid, which induces the Drinfeld double Courant
        algebroid on $U_\pm\oplus K_\mp$. It remains then to show that
        the pairing and bracket of $C_\pm$ are equal to the pairing
        and bracket of this Drinfeld double and that they are thus
        isomorphic as Courant algebroids with the isomorphism given by
        the map $F$ defined in \eqref{eq_iso_CA}.

	With the definition of the bracket in $C_\pm$ in \eqref{eq_Cpm_bra},
	it follows for $k_1,k_2\in\Gamma(K_\mp)$ directly that
	\begin{align*}
	\DMbra{(0\oplus k_1)}{(0\oplus k_2)}_{C_\pm}
	=0\oplus \DMbra{k_1}{k_2}_{d,\C}\,.
	\end{align*}
	Similarly, for two sections $u_1,u_2\in\Gamma(U_\pm)$, the bracket is 
	$\DMbra{(u_1\oplus 0)}{(u_2\oplus 0)}_{C_\pm}=[u_1,u_2]_{U_\pm}\oplus 0$. 
	
	From the definition of the pairing in $C_\pm$ in \eqref{eq_Cpm_pair} 
	it is easy to see that both $U_\pm\oplus 0$ and $0\oplus K_\mp$ 
	are maximally isotropic with respect to this pairing and thus 
	Dirac structures in $C_\pm$. Thus by the argument in \cite{LiWeXu97}
	$(U_\pm,K_\mp)$ form complex Lie bialgebroids. 
	
	The anchor and pairing are easily seen to be equal to the anchor
	and the pairing in the Drinfeld double Courant algebroid. The 
	bracket in the Drinfeld double is defined for 
	$u_1,u_2\in\Gamma(U_\pm)$ and $k_1,k_2\in\Gamma(K_\mp)$ by
	\[
		\DMbra{(u_1,k_1)}{(u_2,k_2)}
		=\Bigl([u_1,u_2]+\LL_{k_1}^Ku_2-\iota_{k_2}\dd_K u_1,
		\quad [k_1,k_2]+\LL^U_{u_1}k_2-\iota_{u_2}\dd_U k_1 \Bigr)\,.		
	\]
	It only remains to be shown that the brackets of elements of
	the form $(u,0)$ with $(0,k)$ coincide, the rest follows by
	bilinearity, since the brackets on $U_\pm$ and $K_\mp$ were
	already shown to be inherited from the bracket in $C_\pm$. A 
	straightforward computation using \eqref{eq_DCA_bra_DMC}
	shows 
	\[
		\DMbra{u\oplus 0}{0\oplus k}_{C_\pm}=
		-\nabla^{\bas,\C}_{\pr_{A_\C}k}u \oplus \Delta^\C_u k=
		-\iota_k \dd_K u \oplus \LL^U_{u}k\,,
              \]
              see also \cite{Heuer19} for details.
	Hence $F$ defines indeed an isomorphism of Courant algebroids
	from $C_\pm$ to the Drinfeld double $U_\pm\oplus K_\mp$. 
\end{proof}

\begin{example}\label{ex_cplx_LBA_CA}  
       In the situation of Example \ref{hol_ex_cpla},
if $A\to M$ is equipped with a Lie algebroid structure and a
compatible linear complex structure, then the eigenbundles are Lie
algebroids and thus also define Drinfeld double Courant algebroids
\begin{equation*}
\begin{split}
	C_T^{1,0}&=T^{1,0}M\oplus (T^{1,0}M)^*\,,\qquad
	C_A^{1,0}=A^{1,0}\oplus (A^{1,0})^*\,,\\
	C_T^{0,1}&=T^{0,1}M\oplus (T^{0,1}M)^*\,,\qquad
	C_A^{0,1}=A^{0,1}\oplus (A^{0,1})^*\,,\\
\end{split}
\end{equation*}
induced by the Lie bialgebroid structure where $T^*_\C M$ and $A_\C^*$
are endowed with trivial Lie algebroid structures. That is, the
brackets on $C_T^{1,0}$ and $C_T^{0,1}$ are given by
\[
	\DMbra{(X,\theta)}{(Y,\eta)}=([X,Y],\LL_X\eta-\iota_Y\dd\theta)\,,
\]
and analogously on $C_A^{1,0}$ and $C_A^{0,1}$. \eqref{eq_cplx_U_K}
shows that as vector bundles $C_+=C_T^{1,0}\oplus C_A^{0,1}$ and
$C_-=C_T^{0,1}\oplus C_A^{1,0}$. The computations in \cite{Heuer19}
show that these are orthogonal decompositions with respect to the pairings 
in $C_\pm$ and that the brackets in $C_T^{1,0}$, $C_T^{0,1}$, $C_A^{1,0}$ 
and $C_A^{0,1}$ coincide with the respective 
restrictions of the brackets in $C_\pm$. In other words, they form
matched pairs of Courant algebroids, a notion introduced in \cite{GrSt14}.
\end{example}

\section{Generalised complex structures in VB-Courant algebroids}
\label{sec_gcs_VB_CA}

In this section the results of Section \ref{sec_gcs_vb} are extended to 
general VB-Courant algebroids $(\E;Q,B;M)$. This leads to a definition
of generalised complex structures in split Lie 2-algebroids.

A linear splitting $\Sigma$ of the double vector bundle $\E$ is called
\textbf{Lagrangian} if the image of $\Sigma$ is isotropic in $\E$.
The paper \cite{Jotz19b} shows that a change of Lagrangian splittings
corresponds to a skew-symmetric element
$\Phi_{12}\in\Gamma(Q^*\otimes B^*\otimes Q^*)$.

Only the description of linear splittings with Dorfman connections
relies on the special case of $TE\oplus T^*E$. The other results of 
Section \ref{sec_gcs_vb} only use the abstract structure of a metric 
double vector bundle and Lagrangian lifts. They therefore generalise
to VB-Courant algebroids in the following way. 

Fix a Lagrangian splitting $\Sigma$ of $\E$  and denote the 
corresponding lift by $\sigma\colon \Gamma(Q)\to \Gamma_B^\ell(\E)$. 
Consider a double vector bundle morphism $\JJ\colon \E\to \E$ 
over $\id_B$ and $j\colon Q\to Q$ with core morphism 
$j_C\colon Q^*\to Q^*$. As in Lemma
\ref{lem_J_lift}, the following definition of $\Phi$ depends on the 
choice of the splitting.
\begin{lemma}\label{lem_J_lift_VB_CA}
	Given a double vector bundle morphism $\JJ\colon \E\to \E$ over $j$ and 
	$\id_B$	there is $\Phi\in\Gamma(Q^*\otimes B^*\otimes Q^*)$ defined by 
	setting for any $q\in\Gamma(Q)$ 
	\[
		\JJ(\sigma(q))=\sigma(jq)+\corelinear{\Phi(q)}\,.
	\]
\end{lemma}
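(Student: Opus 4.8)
The plan is to follow the proof of Lemma \ref{lem_J_lift} essentially verbatim, since that argument used only the abstract double vector bundle structure together with the properties of a horizontal (here Lagrangian) lift, both of which are available in the present setting. The core of the metric double vector bundle $(\E; Q, B; M)$ is $Q^*$, and the structural fact I would rely on is that \emph{core-linear sections} of $\E \to B$ --- that is, linear sections projecting to the zero section of $Q \to M$ --- are in bijection with $\Gamma(\Hom(B, Q^*)) = \Gamma(B^* \otimes Q^*)$, via $\phi \mapsto \corelinear{\phi}$.

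First I would observe that, because $\JJ\colon \E \to \E$ is a morphism of double vector bundles over $\id_B$ on the side $B$ and over $j$ on the side $Q$, it sends the linear section $\sigma(q) \in \Gamma^\ell_B(\E)$, which projects to $q \in \Gamma(Q)$, to a linear section of $\E \to B$ projecting to $j(q) \in \Gamma(Q)$. Since $\sigma(jq)$ is also linear over $j(q)$, the difference $\JJ(\sigma(q)) - \sigma(jq)$ is a linear section over the zero section of $Q$, hence core-linear. This yields, for each $q$, a unique $\Phi(q) \in \Gamma(\Hom(B, Q^*))$ with
\[
  \JJ(\sigma(q)) = \sigma(jq) + \corelinear{\Phi(q)}\,.
\]

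It then remains to check that $q \mapsto \Phi(q)$ is $C^\infty(M)$-linear, so that $\Phi$ is a genuine tensor in $\Gamma(Q^* \otimes B^* \otimes Q^*)$. For $f \in C^\infty(M)$, using that $\sigma$ satisfies $\sigma(fq) = q_B^* f \cdot \sigma(q)$ (where $q_B\colon B \to M$ is the projection), that $\JJ$ is a vector bundle morphism over $\id_B$, and that $j$ is linear over $\id_M$, I would compute
\[
  \corelinear{\Phi(fq)} = \JJ(\sigma(fq)) - \sigma(j(fq)) = q_B^* f \cdot \JJ(\sigma(q)) - q_B^* f \cdot \sigma(jq) = q_B^* f \cdot \corelinear{\Phi(q)} = \corelinear{f\Phi(q)}\,,
\]
which forces $\Phi(fq) = f\Phi(q)$. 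This mirrors exactly the corresponding computation in Lemma \ref{lem_J_lift}, now transcribed to the abstract metric double vector bundle.

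The only point requiring care --- and the main, if mild, obstacle --- is the identification of core-linear sections of $\E \to B$ with $\Gamma(\Hom(B, Q^*))$, together with the claim that a morphism of double vector bundles over $j$ maps linear sections over $q$ to linear sections over $j(q)$. Both are standard facts about double vector bundles and their linear splittings (see the references cited in the Prerequisites), and they take the place of the concrete descriptions via Dorfman connections that were used in the special case $\E = TE \oplus T^*E$; no feature particular to $TE \oplus T^*E$ enters the argument.
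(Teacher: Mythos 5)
Your proposal is correct and coincides with the paper's own treatment: the paper proves Lemma \ref{lem_J_lift} by exactly this argument (difference of two linear sections over $j(\nu)$ is core-linear, then a $q_E^*f$-computation gives tensoriality), and for Lemma \ref{lem_J_lift_VB_CA} it simply invokes that same argument in the abstract metric double vector bundle, where the core is $Q^*$ and core-linear sections of $\E\to B$ are identified with $\Gamma(\Hom(B,Q^*))$. Your transcription, including the observation that no feature particular to $TE\oplus T^*E$ (nor the Lagrangian property of the splitting) is needed, is exactly what the paper intends.
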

Furthermore, the following lemmas generalise the description of generalised almost 
complex structures on a vector bundle in Section \ref{sec_gcs_vb}.
\begin{lemma}\label{lem_J_squ_VB_CA}
	A double vector bundle morphism $\JJ\colon \E\to\E$  satisfies
	$\JJ^2=-\id_{TE\oplus T^*E}$ if and only if for any Lagrangian splitting
	and corresponding $\Phi$, and for any $q\in\Gamma(Q)$:
	\begin{enumerate}
		\item $j^2=-\id_{Q}$\,,
		\item $j_C^2=-\id_{Q^*}$\,, 
		\item $\Phi(j(q))=-j_C\circ(\Phi(q))$\,.
	\end{enumerate}
\end{lemma}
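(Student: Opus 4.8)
The plan is to adapt the proof of Lemma~\ref{lem_J_squ} essentially verbatim, since that argument relied only on the abstract structure of a metric double vector bundle together with the action of $\JJ$ on lifts and on core and core-linear sections. First I would record the VB-Courant algebroid analog of Remark~\ref{lem_J_core_lin}: because $\JJ$ is a double vector bundle morphism over $j$ and $\id_B$ with core morphism $j_C\colon Q^*\to Q^*$, it sends any core section $\tau^\uparrow$ (for $\tau\in\Gamma(Q^*)$) to $j_C(\tau)^\uparrow$, and consequently any core-linear section $\corelinear{\varphi}$ (for $\varphi\in\Gamma(\Hom(B,Q^*))$) to $\corelinear{j_C\circ\varphi}$. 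The latter follows from the former by writing $\corelinear{\beta\otimes\tau}=\ell_\beta\cdot\tau^\uparrow$ for $\beta\in\Gamma(B^*)$ and $\tau\in\Gamma(Q^*)$ and using that $\JJ$ is a vector bundle morphism over $\id_B$.

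Then I would verify $\JJ^2=-\id_\E$ on the two families of sections spanning $\Gamma_B(\E)$ as a $C^\infty(B)$-module, namely the lifts $\sigma(q)$ and the core sections $\tau^\uparrow$. On a core section, $\JJ^2(\tau^\uparrow)=j_C^2(\tau)^\uparrow$, which equals $-\tau^\uparrow$ for all $\tau$ precisely when $j_C^2=-\id_{Q^*}$, yielding condition~(2). On a lift, Lemma~\ref{lem_J_lift_VB_CA} together with the core-linear identity above gives
\[
	\JJ^2(\sigma(q))=\JJ\bigl(\sigma(jq)+\corelinear{\Phi(q)}\bigr)
	=\sigma(j^2 q)+\corelinear{\Phi(jq)}+\corelinear{j_C\circ\Phi(q)}.
\]
Comparing the linear part with $-\sigma(q)$ forces $j^2=-\id_Q$, giving condition~(1), while the vanishing of the core-linear remainder forces $\Phi(jq)=-j_C\circ\Phi(q)$, giving condition~(3). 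Conversely, under (1)--(3) the same display immediately yields $\JJ^2(\sigma(q))=-\sigma(q)$, and (2) gives $\JJ^2(\tau^\uparrow)=-\tau^\uparrow$, so $\JJ^2=-\id_\E$ on all sections.

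I expect no genuine obstacle here, as the computation is formally identical to the vector bundle case. The only point needing care is the core-linear identity $\JJ(\corelinear{\varphi})=\corelinear{j_C\circ\varphi}$, which replaces Remark~\ref{lem_J_core_lin}(2) and must now be justified purely from the abstract double vector bundle structure rather than from the explicit description of $TE\oplus T^*E$; once it is in place, the argument goes through unchanged.
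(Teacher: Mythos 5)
Your proposal is correct and takes essentially the same approach the paper intends: the paper states Lemma~\ref{lem_J_squ_VB_CA} without a separate proof, presenting it as the direct generalisation of Lemma~\ref{lem_J_squ}, and your argument is exactly that proof transported to the abstract metric double vector bundle setting (checking $\JJ^2$ on lifts and core sections, which span the sections of $\E\to B$). The core-linear identity $\JJ(\corelinear{\varphi})=\corelinear{j_C\circ\varphi}$ that you flag is justified precisely as in Remark~\ref{lem_J_core_lin}, whose computation already uses only that $\JJ$ is a vector bundle morphism over $\id_B$ acting on cores by $j_C$, so nothing specific to $TE\oplus T^*E$ is needed.
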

\begin{lemma}
	\label{lem_J_ort_VB_CA}
	A double vector bundle morphism $\JJ\colon \E\to\E$ such that 
	additionally $\JJ^2=-1$, is orthogonal if and only if for any 
	Lagrangian splitting 
	\begin{enumerate}
		\item $j=-(j_C)^t$\,, 
		\item $\pair{j(q_1)}{\Phi(q_2)(b)}
		=-\pair{j(q_2)}{\Phi(q_1)(b)}$
              \end{enumerate}
              for all $b\in \Gamma(B)$ and 
              $q_1,q_2\in\Gamma(Q)$.
\end{lemma}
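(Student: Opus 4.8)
The plan is to mirror the proof of Lemma \ref{lem_J_ort} in the abstract setting, checking orthogonality on a spanning family of sections of $\E\to B$. Since the fibrewise pairing is $C^\infty(B)$-bilinear and $\Gamma_B(\E)$ is generated over $C^\infty(B)$ by the Lagrangian lifts $\sigma(q)$, $q\in\Gamma(Q)$, and the core sections $\tau^\dagger$, $\tau\in\Gamma(Q^*)$, it suffices to verify $\pair{\JJ\xi_1}{\JJ\xi_2}=\pair{\xi_1}{\xi_2}$ as $\xi_1,\xi_2$ range over these two families. Throughout I would use that $\JJ$ acts on core sections by $\JJ(\tau^\dagger)=(j_C\tau)^\dagger$ (the defining property of the core morphism), together with Lemma \ref{lem_J_lift_VB_CA}, which gives $\JJ(\sigma(q))=\sigma(jq)+\corelinear{\Phi(q)}$. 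I would fix one arbitrary Lagrangian splitting with its associated $\Phi$ and prove the equivalence for it; since orthogonality of $\JJ$ is splitting-independent, the conditions then hold for every Lagrangian splitting, which accounts for the ``for any'' in the statement.

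First I would record the pairing identities for a Lagrangian splitting of a metric double vector bundle, obtained by extending the formulas of \cite{Jotz18a,Jotz19b}: the lift--lift pairing $\pair{\sigma(q_1)}{\sigma(q_2)}$ vanishes (isotropy of the image of $\Sigma$), the lift--core pairing equals $q_B^*\pair{q}{\tau}$, the core--core pairing vanishes, and core-linear sections satisfy $\pair{\corelinear{\varphi}}{\sigma(q)}=\ell_{\varphi^t(q)}$ while pairing trivially with both core and core-linear sections---exactly as in \eqref{pairing_core_lin}.

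Then the three cases run as follows. The core--core case is automatic, since $\pair{\JJ(\tau_1^\dagger)}{\JJ(\tau_2^\dagger)}=\pair{(j_C\tau_1)^\dagger}{(j_C\tau_2)^\dagger}=0=\pair{\tau_1^\dagger}{\tau_2^\dagger}$. For the lift--core case, $\pair{\JJ\sigma(q)}{\JJ\tau^\dagger}=q_B^*\pair{jq}{j_C\tau}$, which must equal $q_B^*\pair{q}{\tau}$; as $q,\tau$ are arbitrary this says $j^t\circ j_C=\id_{Q^*}$, and invoking the hypothesis $\JJ^2=-1$ (hence $j^2=-\id_Q$ by Lemma \ref{lem_J_squ_VB_CA}) this is equivalent to $j_C=-j^t$, i.e.\ to condition (1). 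For the lift--lift case, expanding $\pair{\JJ\sigma(q_1)}{\JJ\sigma(q_2)}$ and discarding $\pair{\sigma(jq_1)}{\sigma(jq_2)}=0$ and $\pair{\corelinear{\Phi(q_1)}}{\corelinear{\Phi(q_2)}}=0$ leaves $\ell_{\Phi(q_2)^t(jq_1)+\Phi(q_1)^t(jq_2)}$, which must vanish against $\pair{\sigma(q_1)}{\sigma(q_2)}=0$; evaluating the resulting identity in $B^*$ on $b\in\Gamma(B)$ and using $\pair{jq_1}{\Phi(q_2)(b)}=\langle\Phi(q_2)^t(jq_1),b\rangle$ turns it into condition (2). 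Collecting the three cases yields both implications simultaneously.

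The computations themselves are routine bilinear bookkeeping; the only genuine obstacle is ensuring that the pairing formulas used in the lift--core and core-linear--lift cases really do transfer from $\TT E$ to an arbitrary metric double vector bundle equipped with a Lagrangian splitting. This is precisely the reduction the section advertises, namely that the relevant results ``only use the abstract structure of a metric double vector bundle and Lagrangian lifts,'' so I would justify each pairing identity by citing its counterpart for $\TT E$ in \cite{Jotz18a} and its Lagrangian-splitting generalisation in \cite{Jotz19b}, rather than re-deriving them here.
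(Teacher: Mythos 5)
Your proposal is correct and takes essentially the same approach as the paper: the paper proves this lemma only implicitly, by asserting that the proof of Lemma \ref{lem_J_ort} for $TE\oplus T^*E$ transfers verbatim because it uses only the metric double vector bundle structure and Lagrangian lifts, and your three-case check on lifts and core sections is exactly that argument. The only difference is presentational, in that you spell out the transferred pairing identities and the role of $\JJ^2=-1$ in converting $(j_C)^t=j^{-1}$ into condition (1), which the paper leaves tacit.
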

Now define a 2-form $\Psi\in\Omega^2(Q,B^*)$ by setting
$\Psi(q_1,q_2):=\Phi(q_1)^t(q_2)$. This definition yields the
following, as in Proposition \ref{prop_J_gacs}.
\begin{prop}\label{prop_J_gacs_VB_CA}
	A morphism $\JJ\colon \E\to\E$ is a generalised almost complex 
	structure in $\E$, if and only if for any Lagrangian splitting 
	\begin{enumerate}
		\item $j^2=-1$\,,
		\item $j=-(j_C)^t$\,,
		\item $\Psi$ is skew-symmetric, that is $\Psi\in\Omega^2(Q,B^*)$\,,
		\item $\Psi(q_1,q_2)=-j^*\Psi(q_1,q_2)$ for $q_1,q_2\in\Gamma(Q)$.
	\end{enumerate}
\end{prop}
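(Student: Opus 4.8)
The plan is to obtain Proposition \ref{prop_J_gacs_VB_CA} by combining Lemma \ref{lem_J_squ_VB_CA} and Lemma \ref{lem_J_ort_VB_CA}, exactly as Proposition \ref{prop_J_gacs} was obtained in the vector bundle case. By Definition \ref{def_gacs}, a generalised almost complex structure is nothing but a double vector bundle morphism $\JJ\colon\E\to\E$ with $\JJ^2=-\id$ that is orthogonal, so the two lemmas together already characterise it; the work is purely to repackage their conclusions into the four stated conditions. First I would observe that conditions (1) and (2) of the proposition encode all the algebraic conditions on $j$ and $j_C$: assuming $j^2=-\id_Q$ and $j=-(j_C)^t$, the remaining condition $j_C^2=-\id_{Q^*}$ of Lemma \ref{lem_J_squ_VB_CA} is automatic, since $j=-(j_C)^t$ gives $j_C=-j^t$ and hence $j_C^2=(j^t)^2=(j^2)^t=-\id_{Q^*}$.

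The heart of the argument is to rewrite the two conditions on $\Phi$ in terms of $\Psi$. Using $\Psi(q_1,q_2)=\Phi(q_1)^t(q_2)$ together with $j_C^t=-j$, transposing the identity $\Phi(jq)=-j_C\circ\Phi(q)$ of Lemma \ref{lem_J_squ_VB_CA} and evaluating gives $\Psi(jq_1,q_2)=\Psi(q_1,jq_2)$, call it $(\star)$. Likewise, writing the orthogonality pairing of Lemma \ref{lem_J_ort_VB_CA} through transposes and the canonical pairing of $Q$ with its core $Q^*$ turns $\pair{jq_1}{\Phi(q_2)(b)}=-\pair{jq_2}{\Phi(q_1)(b)}$ into $\Psi(q_2,jq_1)=-\Psi(q_1,jq_2)$, call it $(\star\star)$. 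The only subtle bookkeeping here is the transpose convention and the identification of the core with $Q^*$, but these are precisely the canonical identifications supplied by the metric double vector bundle structure.

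Finally I would carry out the elementary manipulation converting $(\star)$ and $(\star\star)$ into conditions (3) and (4). Substituting $(\star)$ into $(\star\star)$ yields $\Psi(q_2,jq_1)=-\Psi(jq_1,q_2)$, and then replacing $q_2$ by $jq_3$ — legitimate since $j^2=-\id_Q$ makes $j$ bijective, so its image is all of $Q$ — gives skew-symmetry $\Psi(q_1,q_2)=-\Psi(q_2,q_1)$, that is condition (3); replacing the second argument by its $j$-image directly in $(\star)$ and using $j^2=-\id_Q$ gives $\Psi(jq_1,jq_2)=-\Psi(q_1,q_2)$, i.e.\ $\Psi=-j^*\Psi$, condition (4). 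The converse is immediate, as conditions (3) and (4) reproduce both $(\star)$ and $(\star\star)$ and hence, via the two lemmas, the defining properties of a generalised almost complex structure. I expect no genuine obstacle: every ingredient relies only on the splitting of the problem into the $\JJ^2=-\id$ and orthogonality parts and on the linear algebra of $j$, $j_C$ and $\Psi$, all available verbatim because $\E$ is a metric double vector bundle and $\sigma$ is a Lagrangian lift. The proof is therefore word-for-word that of Proposition \ref{prop_J_gacs}, with $\nu$ replaced by $q$ and $TM\oplus E^*$, $E\oplus T^*M$ replaced by $Q$, $Q^*$.
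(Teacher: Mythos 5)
Your proposal is correct and is essentially the paper's own argument: the paper proves Proposition \ref{prop_J_gacs} exactly this way (reformulating the $\Phi$-conditions of the two lemmas as $\Psi(j\nu_1,\nu_2)=\Psi(\nu_1,j\nu_2)$ and $\Psi(\nu_2,j\nu_1)=-\Psi(\nu_1,j\nu_2)$, then substituting $j\nu_3$ for the second argument to extract skew-symmetry and $\Psi=-j^*\Psi$), and for the VB-Courant case it simply invokes that proof verbatim, as you do. Your added observation that $j_C^2=-\id_{Q^*}$ follows from (1) and (2) via $j_C=-j^t$ is a correct and harmless supplement to the bookkeeping.
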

Also in this case, a Lagrangian splitting can be adapted to the
generalised almost complex structure. As mentioned before it was shown
in \cite{Jotz19b} that such a change of splittings corresponds to a
skew-symmetric element
$\Phi_{12}\in\Gamma(Q^*\otimes B^*\otimes Q^*)$. 
\begin{prop}\label{prop_adapted_lift_VB_CA}
	Given a generalised almost complex structure $\JJ$ in a VB-Courant 
	algebroid	$(\E;Q,B;M)$ with side morphism $j\colon Q\to Q$, there is 
	a Lagrangian lift $\sigma\colon \Gamma(Q)\to \Gamma_B^\ell(\E)$, such 
	that for any $q\in\Gamma(Q)$
	\[
		\JJ(\sigma(q))=\sigma(jq)\,.
	\]
\end{prop}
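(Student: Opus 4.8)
The plan is to follow the proof of Proposition~\ref{prop_adapted_DMC} verbatim, replacing the change of skew-symmetric Dorfman connection by a change of Lagrangian splitting. First I would fix an arbitrary Lagrangian splitting with lift $\sigma_1$ and associated tensor $\Phi_1\in\Gamma(Q^*\otimes B^*\otimes Q^*)$ from Lemma~\ref{lem_J_lift_VB_CA}, so that $\JJ(\sigma_1(q))=\sigma_1(jq)+\corelinear{\Phi_1(q)}$ for all $q\in\Gamma(Q)$. By Proposition~\ref{prop_J_gacs_VB_CA} the associated form $\Psi_1(q_1,q_2):=\Phi_1(q_1)^t(q_2)$ lies in $\Omega^2(Q,B^*)$ and satisfies $\Psi_1=-j^*\Psi_1$, that is $\Psi_1(q_1,q_2)=-\Psi_1(jq_1,jq_2)$. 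The aim is to produce a second Lagrangian splitting $\sigma_2$ whose tensor $\Phi_2$, equivalently whose form $\Psi_2$, vanishes.

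The one genuinely new ingredient is a VB-Courant analogue of the change-of-splitting formula~\eqref{eq_change_psi}. Recall from~\cite{Jotz19b} that a change of Lagrangian splitting is encoded by a skew-symmetric $\Phi_{12}\in\Gamma(Q^*\otimes B^*\otimes Q^*)$ through $\sigma_2(q)=\sigma_1(q)+\corelinear{\Phi_{12}(q)}$. Using the abstract analogue of Remark~\ref{lem_J_core_lin}, namely $\JJ(\corelinear{\varphi})=\corelinear{j_C\circ\varphi}$ for $\varphi\in\Gamma(\Hom(B,Q^*))$ --- which holds here because $\JJ$ sends core sections to core sections via the core morphism $j_C$ on $Q^*$ --- one computes
\[
\corelinear{\Phi_2(q)}=\JJ(\sigma_2(q))-\sigma_2(jq)=\corelinear{\Phi_1(q)}+\corelinear{j_C\circ\Phi_{12}(q)}-\corelinear{\Phi_{12}(jq)}.
\]
Dualising this identity and using $j_C^t=-j$ from Proposition~\ref{prop_J_gacs_VB_CA} yields exactly the verbatim analogue of~\eqref{eq_change_psi},
\[
\Psi_2(q_1,q_2)=\Psi_1(q_1,q_2)-\Psi_{12}(q_1,jq_2)-\Psi_{12}(jq_1,q_2).
\]

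With this formula the argument closes as in Proposition~\ref{prop_adapted_DMC}. I would set $\Psi_{12}(q_1,q_2):=-\half\Psi_1(q_1,jq_2)$. Substituting $q_2\mapsto jq_2$ in $\Psi_1=-j^*\Psi_1$ and using $j^2=-\id_Q$ gives the symmetry $\Psi_1(q_1,jq_2)=\Psi_1(jq_1,q_2)$, which together with skew-symmetry of $\Psi_1$ shows that $\Psi_{12}$ is skew-symmetric, hence a legitimate change of Lagrangian splitting. Feeding $\Psi_{12}$ into the change formula and invoking $\Psi_1=-j^*\Psi_1$ once more gives $\Psi_{12}(q_1,jq_2)=\half\Psi_1(q_1,q_2)$ and $\Psi_{12}(jq_1,q_2)=\half\Psi_1(q_1,q_2)$, so that $\Psi_2=0$. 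Equivalently $\Phi_2=0$, and the resulting lift $\sigma_2$ satisfies $\JJ(\sigma_2(q))=\sigma_2(jq)$ for all $q\in\Gamma(Q)$, as required.

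I expect the main obstacle to be the first step, establishing the change-of-splitting formula in the abstract metric double vector bundle setting: the original Lemma~\ref{lem_change_psi} is phrased via Dorfman connections on $E\oplus T^*M$, whereas here only Lagrangian lifts and the behaviour of $\JJ$ on core-linear sections are available. As the discussion preceding the statement already notes, however, these are precisely the structures that survive the passage to a general VB-Courant algebroid, so the computation goes through unchanged and no essentially new difficulty appears.
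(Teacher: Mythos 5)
Correct, and essentially the same proof as the paper's: the paper likewise transplants Proposition \ref{prop_adapted_DMC}, defining the correction tensor by $\Phi_{12}(q)(b)=\half j_C(\Phi_1(q)(b))$ --- which under $\Psi_{12}(q_1,q_2)=\Phi_{12}(q_1)^t(q_2)$ and $j_C^t=-j$ is exactly your $\Psi_{12}=-\half\Psi_1(\cdot\,,j\cdot)$ --- and invoking the result of \cite{Jotz19b} that skew-symmetric tensors parametrise changes of Lagrangian splittings. Your explicit derivation of the change-of-splitting formula is in fact a worthwhile addition: the paper omits it and writes the new lift as $\sigma_2(q)=\sigma_1(q)-\corelinear{\Phi_{12}(q)}$ with a minus sign, which relative to the convention $\corelinear{\Phi_{12}(\nu)}=\sigma_2(\nu)-\sigma_1(\nu)$ of Definition \ref{def_change_DMC} is a harmless sign slip (one checks that with that minus sign the defect becomes $\corelinear{2\Phi_1(q)}$), whereas your computation with the plus sign closes correctly.
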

\begin{proof}
	Fix any Lagrangian lift $\sigma_1$ of $\E$. This defines by 
	Lemma \ref{lem_J_lift_VB_CA} a tensor $\Phi_1\in\Gamma(Q^*\otimes B^*\otimes Q^*)$. 
	Define another tensor $\Phi_{12}\in\Gamma(Q^*\otimes B^*\otimes Q^*)$ by 
	setting for any $q\in\Gamma(Q)$ and $b\in\Gamma(B)$
	\[
		\Phi_{12}(q)(b):=\half j_C(\Phi_1(q)(b))\,.
	\]
	By Lemma \ref{lem_J_squ_VB_CA} and Lemma \ref{lem_J_ort_VB_CA}, 
	$\Phi_{12}$ is skew-symmetric. Define a new Lagrangian lift by 
	$\sigma_2(q):=\sigma_1(q)-\corelinear{\Phi_{12}(q)}$. This lift
	satisfies the desired property.
\end{proof}

Using this existence of an adapted Lagrangian splitting, use the 
correspondence of VB-Courant algebroid structures to split Lie 2-algebroids
proved in \cite{Jotz19b}. Fix such an adapted Lagrangian splitting
as in Proposition \ref{prop_adapted_lift_VB_CA}. Then the VB-Courant algebroid
structure is equivalent to a split Lie 2-algebroid structure 
$(\rho_Q,\partial_B^t,\DMbra{\cdot}{\cdot}_\Delta,\nabla,\omega)$ on $Q\oplus B^*$, 
where the bracket in $\E$ is described by the dull bracket on $Q$ and the 
dual Dorfman connection as follows.
\begin{align*}
	\DMbra{\sigma(q_1)}{\sigma(q_2)}
	&=\sigma(\DMbra{q_1}{q_2}_\Delta)-\corelinear{R_\omega(q_1,q_2)}\\
	\DMbra{\sigma(q)}{\tau^\dagger}&=(\Delta_q\tau)^\dagger \quad \text{ and }\quad
	\DMbra{\tau_1^\dagger}{\tau_2^\dagger}=0\,.
\end{align*}
Here $R_\omega(q_1,q_2):=\omega(q_1,q_2,\cdot)^t\in\Gamma(\Hom(B,Q^*))$

This description of the Courant algebroid bracket yields similar 
computations and results for the Nijenhuis tensor of core sections and lifts 
for a linear generalised almost complex structure in the VB-Courant algebroid
$\E$ as in Section \ref{sec_lgcs_int} in the special case of $TE\oplus T^*E$. 

First, analogously to the computations in Section \ref{sec_lgcs_int},
the section $N_\JJ(\sigma(q),\tau^\dagger)$ vanishes for any $q\in\Gamma(Q)$ and $\tau\in\Gamma(Q^*)$ if and 
only if $N_{j,\DMbra{\cdot}{\cdot}_\Delta}$ vanishes. Second, the analogous 
computation for the Nijenhuis tensor of two lifts gives 
\[
	\begin{split}
	N_\JJ(\sigma(q_1),\sigma(q_2))&=
	\slift(N_{j,\DMbra{\cdot}{\cdot}_Q}(q_1,q_2))
	+\corelinear{R_\omega(j(q_1),j(q_2))}
	-\corelinear{R_\omega(q_1,q_2)}\\
	&\quad-\corelinear{j_C\circ R_\omega(j(q_1),q_2)}
	-\corelinear{j_C\circ R_\omega(q_1,j(q_2))}
	\end{split}
\]

Dualising the property 
\[
	R_\omega(j(q_1),j(q_2))		
	-R_\omega(q_1,q_2)
	-j_C\circ R_\omega(j(q_1),q_2)
	-j_C\circ R_\omega(q_1,j(q_2))=0\,,
\]
by evaluating at any $b\in\Gamma(B)$ and then pairing with $q_3$ gives 
as an equivalent condition on $\omega\in\Omega^3(Q,B^*)$ the following: 
\[
	\omega(q_1,q_2,q_3)		
	-\omega(jq_1,jq_2,q_3)
	-\omega(jq_1,q_2,jq_3)
	-\omega(q_1,jq_2,jq_3)=0\,.
\]
This yields the following proposition.
\begin{prop}
	A linear generalised almost complex structure $\JJ$ in $\E$ over 
	$j\colon Q\to Q$ is integrable if and only if for any adapted 
	Lagrangian splitting
of the corresponding split Lie 2-algebroid,	
	\begin{enumerate}
		\item $N_{j,\DMbra{\cdot}{\cdot}_\Delta}(q_1,q_2)=0$\,,
		\item $\omega(q_1,q_2,q_3)		
				-\omega(jq_1,jq_2,q_3)
				-\omega(jq_1,q_2,jq_3)
				-\omega(q_1,jq_2,jq_3)=0$
                              \end{enumerate}
                              for any $q_1,q_2,q_3\in\Gamma(Q)$.
\end{prop}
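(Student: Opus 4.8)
The plan is to reduce integrability, that is, the vanishing of the Nijenhuis tensor $N_\JJ$ of Definition \ref{def_gcs}, to a pointwise check on module generators. Because $\JJ^2=-\id$ and $\JJ$ is orthogonal (Proposition \ref{prop_J_gacs_VB_CA}), the tensor $N_\JJ$ is $C^\infty(B)$-bilinear: the Leibniz anomalies produced in each argument cancel, those in the first argument relying on the orthogonality relations $\pair{\JJ e_1}{\JJ e_2}=\pair{e_1}{e_2}$ and $\pair{\JJ e_1}{e_2}+\pair{e_1}{\JJ e_2}=0$. Hence $N_\JJ$ vanishes identically if and only if it vanishes on the lifts $\sigma(q)$ and the core sections $\tau^\dagger$, which generate $\Gamma_B(\E)$ over $C^\infty(B)$. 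I would fix the adapted Lagrangian lift $\sigma$ of Proposition \ref{prop_adapted_lift_VB_CA}, so that $\JJ(\sigma(q))=\sigma(jq)$, $\JJ(\tau^\dagger)=(j_C\tau)^\dagger$ and $\JJ(\corelinear{\varphi})=\corelinear{j_C\circ\varphi}$, and then treat the three types of pairs in turn. Since integrability is intrinsic, conditions (1) and (2) are independent of this choice, so ``for any adapted splitting'' may be read as ``for one fixed'' such splitting.

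On two core sections $N_\JJ(\tau_1^\dagger,\tau_2^\dagger)=0$ holds trivially, as brackets of core sections vanish and $\JJ$ maps core sections to core sections. On a lift and a core section, substituting $\DMbra{\sigma(q)}{\tau^\dagger}=(\Delta_q\tau)^\dagger$ into $N_\JJ$ gives
\[
N_\JJ(\sigma(q),\tau^\dagger)=\bigl(\Delta_q\tau-\Delta_{jq}j_C\tau+j_C\Delta_{jq}\tau+j_C\Delta_q j_C\tau\bigr)^\dagger,
\]
which is precisely \eqref{eq_Nij_lin_core_dual} with $\nu=q$. Pairing the bracketed section with an arbitrary $q'\in\Gamma(Q)$ and using $j_C=-j^t$ together with the duality between $\Delta$ and $\DMbra{\cdot}{\cdot}_\Delta$ shows that its vanishing for all $q$ and $\tau$ is equivalent to $N_{j,\DMbra{\cdot}{\cdot}_\Delta}=0$, which is condition (1).

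The substantial case is a pair of lifts. Feeding $\DMbra{\sigma(q_1)}{\sigma(q_2)}=\sigma(\DMbra{q_1}{q_2}_\Delta)-\corelinear{R_\omega(q_1,q_2)}$ and $\JJ(\corelinear{\varphi})=\corelinear{j_C\circ\varphi}$ into $N_\JJ$ and collecting terms separates $N_\JJ(\sigma(q_1),\sigma(q_2))$ into a lift part $\sigma(N_{j,\DMbra{\cdot}{\cdot}_\Delta}(q_1,q_2))$ and the core-linear part displayed just before the statement; because these two components are linearly independent, vanishing on all lifts is equivalent to condition (1) again together with
\[
R_\omega(jq_1,jq_2)-R_\omega(q_1,q_2)-j_C\circ R_\omega(jq_1,q_2)-j_C\circ R_\omega(q_1,jq_2)=0.
\]
The final step is to dualise this identity: evaluating at $b\in\Gamma(B)$, pairing with $q_3\in\Gamma(Q)$ and using $R_\omega(q_1,q_2)=\omega(q_1,q_2,\cdot)^t$ together with $j_C=-j^t$ converts it into condition (2) on $\omega$. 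I expect the main obstacle to be purely organisational: carefully tracking the four $R_\omega$-terms through $\JJ$ and the sign changes forced by $j_C=-j^t$, after which the three cases assemble into the stated equivalence.
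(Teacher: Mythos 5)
Your proposal is correct and takes essentially the same route as the paper: the paper likewise fixes an adapted Lagrangian splitting via Proposition \ref{prop_adapted_lift_VB_CA}, evaluates $N_\JJ$ on the three types of pairs of generators (core--core vanishing trivially, lift--core yielding condition (1), lift--lift decomposing into the lift of $N_{j,\DMbra{\cdot}{\cdot}_\Delta}(q_1,q_2)$ plus the four core-linear $R_\omega$-terms), and dualises the resulting $R_\omega$-identity against $b\in\Gamma(B)$ and $q_3$ to obtain condition (2). Your explicit check that $N_\JJ$ is $C^\infty(B)$-bilinear (using $\JJ^2=-\id$ and orthogonality) and your remark that the conditions are independent of the choice of adapted splitting are points the paper leaves implicit, but they refine rather than alter the argument.
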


As before the vector bundle morphism $j\colon Q\to Q$ defines an
equivalence relation on the Lagrangian splittings.
\begin{mydef}
	Given a VB-Courant algebroid $(\E;Q,B;M)$ and a vector bundle 
	morphism $j\colon Q\to Q$, two Lagrangian splittings $\Sigma_1$ and
	$\Sigma_2$ are $j$-\textbf{equivalent} if the corresponding change
	of splittings $\Psi\in\Omega^2(Q,B^*)$ satisfies 
	$\Psi(q_1,q_2)=\Psi(jq_1,jq_2)$ for any 
	$q_1,q_2\in\Gamma(Q)$.
\end{mydef}
Analogously to Lemma \ref{lem_j_equiv}, given a splitting $\Sigma_1$
which is adapted to a linear generalised almost complex structure
$(\JJ,j)$, then a second splitting $\Sigma_2$ is also adapted to
$(\JJ,j)$ if and only if $\Sigma_1$ and $\Sigma_2$ are
$j$-equivalent. This allows a formulation of the analogue of Theorem
\ref{thm_lgcs_DMC} in the general case.
\begin{thm}\label{thm_lgcs_VB_CA}
	A linear generalised complex structure $\JJ$ in a VB-Courant algebroid 
	$\E$ is equivalent to a vector bundle morphism $j\colon Q\to Q$ and a 
	$j$-equivalence class of linear splittings such that in the corresponding
	split Lie 2-algebroid $(\rho_Q,\partial_B^t,\DMbra{\cdot}{\cdot}_\Delta,\nabla,\omega)$ 
	over $Q\oplus B^*$ 
	\begin{enumerate}
		\item $j^2=-\id_Q$\,,
		\item $N_{j,\DMbra{\cdot}{\cdot}_\Delta}=0$\,,
		\item $\omega(q_1,q_2,q_3)		
				-\omega(jq_1,jq_2,q_3)
				-\omega(jq_1,q_2,jq_3)
				-\omega(q_1,jq_2,jq_3)=0$
                              \end{enumerate}
                               for any $q_1,q_2,q_3\in\Gamma(Q)$.
\end{thm}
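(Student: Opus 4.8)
The plan is to follow the proof of Theorem \ref{thm_lgcs_DMC} almost verbatim, replacing the Dorfman-connection data by the split Lie 2-algebroid data $(\rho_Q,\partial_B^t,\DMbra{\cdot}{\cdot}_\Delta,\nabla,\omega)$ and assembling the four ingredients already established for the VB-Courant setting: Proposition \ref{prop_J_gacs_VB_CA}, Proposition \ref{prop_adapted_lift_VB_CA}, the integrability criterion in the Proposition immediately preceding this theorem, and the analogue of Lemma \ref{lem_j_equiv} recorded just above. First I would treat the direction from $\JJ$ to the algebraic data. Starting from a linear generalised complex structure $\JJ$ on $\E$ with side morphism $j$, Proposition \ref{prop_J_gacs_VB_CA} yields at once $j^2=-\id_Q$, which is condition (1). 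Proposition \ref{prop_adapted_lift_VB_CA} then provides a Lagrangian splitting $\Sigma$ adapted to $\JJ$ and hence a split Lie 2-algebroid, and since $\JJ$ is integrable the integrability criterion preceding the theorem delivers exactly conditions (2) and (3) in this adapted splitting. To obtain the required $j$-equivalence class I would appeal to the analogue of Lemma \ref{lem_j_equiv}: the splittings adapted to $\JJ$ are precisely those $j$-equivalent to $\Sigma$, so they form a single $j$-equivalence class.

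For the converse I would begin with a morphism $j\colon Q\to Q$ with $j^2=-\id_Q$, a $j$-equivalence class of Lagrangian splittings, and a representative $\Sigma$ whose split Lie 2-algebroid satisfies (2) and (3). Putting $j_C:=-j^t\colon Q^*\to Q^*$, I would define a double vector bundle morphism $\JJ\colon\E\to\E$ over $\id_B$ and $j$ by
\[
\JJ\bigl(\sigma(q)\bigr):=\sigma(jq),\qquad \JJ\bigl(\tau^\dagger\bigr):=(j_C\tau)^\dagger
\]
for $q\in\Gamma(Q)$ and $\tau\in\Gamma(Q^*)$, extended $C^\infty(B)$-linearly; as the lifts $\sigma(q)$ and the core sections $\tau^\dagger$ generate $\Gamma_B(\E)$ and $j$ is $C^\infty(M)$-linear, this prescription is consistent and makes $\Sigma$ adapted to $\JJ$, so that the tensor $\Phi$ of Lemma \ref{lem_J_lift_VB_CA} and hence $\Psi$ vanish. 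Conditions (3) and (4) of Proposition \ref{prop_J_gacs_VB_CA} are then automatic, and together with $j^2=-\id_Q$ and $j=-(j_C)^t$ that proposition shows $\JJ$ is a generalised almost complex structure; conditions (2) and (3) of the theorem are precisely the hypotheses of the integrability criterion, so $\JJ$ is in addition integrable.

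Finally I would check that the two constructions are mutually inverse, which is the only place where some care rather than a new idea is needed. Extracting $j$ from $\JJ$ and rebuilding $\JJ$ on an adapted splitting returns the original morphism, since on an adapted splitting $\JJ$ is forced to satisfy $\JJ\sigma=\sigma j$ and $\JJ(\tau^\dagger)=(j_C\tau)^\dagger$, neither of which depends on the representative. Conversely the $\JJ$ built from $(j,[\Sigma])$ is independent of the chosen representative: any other $\Sigma'$ in the class is $j$-equivalent to $\Sigma$, hence also adapted to $\JJ$ by the analogue of Lemma \ref{lem_j_equiv}, and therefore yields the same $\JJ$; in particular a single representative satisfying (2) and (3) suffices to conclude integrability, consistently with the criterion holding for \emph{any} adapted splitting. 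The main obstacle I anticipate is thus essentially bookkeeping, namely confirming well-definedness over the whole $j$-equivalence class and that extraction and reconstruction are inverse, all of which reduces to the cited results without any further computation of Nijenhuis tensors.
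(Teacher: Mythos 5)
Your proposal is correct and follows essentially the same route the paper intends: the paper leaves this theorem's proof implicit, as precisely the assembly of Proposition \ref{prop_J_gacs_VB_CA}, Proposition \ref{prop_adapted_lift_VB_CA}, the integrability criterion, and the $j$-equivalence statement, mirroring the proof of Theorem \ref{thm_lgcs_DMC}. Your bookkeeping on well-definedness over the $j$-equivalence class and the mutual inverseness of the two constructions matches the argument given there for the special case $TE\oplus T^*E$.
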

Analogously to the case of $TE\oplus T^*E$,  a bracket $\AAA$ on $\Gamma(Q)$ 
can be defined by 
$\AAA(q_1,q_2)=\half\bigl(\DMbra{q_1}{q_2}_\Delta-\DMbra{jq_1}{jq_2}_\Delta\bigr)\,.$
The vanishing of $N_{j,\DMbra{\cdot}{\cdot}_\Delta}$ is equivalent to complex
bilinearity of $\AAA$ and the condition on $\omega$ in Theorem \ref{thm_lgcs_VB_CA}
implies the Jacobi identity for $\AAA$. This defines a complex Lie algebroid 
$(Q,\rho,\AAA$ with the complex anchor $\rho\colon Q\to T_\C M$ given by 
\[\rho(q)=\half\bigl(\rho_Q(q)-ij\rho_Q(q)\bigr)\,.\] 
But if the core-anchor $\partial_B$ is not surjective, then the 
condition on $\omega$ in Theorem \ref{thm_lgcs_VB_CA} is stronger
than the Jacobi identity of this bracket, since  
$\Jac_{\DMbra{\cdot}{\cdot}_\Delta}=\partial_B^t\circ \omega$. 
Therefore -- unlike in the special case of $TE\oplus T^*E$ -- here the 
complex Lie algebroid structure is not sufficient to describe 
the conditions on the linear generalised complex structure. 

Theorem \ref{thm_lgcs_VB_CA} suggests that a generalised complex
structure in a split Lie 2-algebroid should be defined as a tuple of maps
$(\rho_Q,\partial_B^t,\DMbra{\cdot}{\cdot},\nabla,\omega)$ over
$Q\oplus B^*$ is a vector bundle morphism $j\colon Q\to Q$, such that
for any $q_1,q_2,q_3\in\Gamma(Q)$ 
	\begin{enumerate}
		\item $j^2=-\id_Q$\,,
		\item $N_{j,\DMbra{\cdot}{\cdot}}=0$\,,
		\item $\omega(q_1,q_2,q_3)		
				-\omega(jq_1,jq_2,q_3)
				-\omega(jq_1,q_2,jq_3)
				-\omega(q_1,jq_2,jq_3)=0$\,.
	\end{enumerate}

        \appendix

      \section{Relation with the adapted generalised connections in \texorpdfstring{\cite{CoDa19}}{CD19}}\label{app_comparison}
      The equality $\JJ(\slift(\nu))=\slift(j\nu)$ in Proposition
      \ref{prop_adapted_DMC} for $\nu\in\Gamma(TM\oplus E^*)$ is
      equivalent to
\begin{equation}\label{geometric_adapted1} \JJ(L_\Delta)=L_\Delta
\end{equation}
for the horizontal space $L_\Delta\subseteq TE\oplus T^*E$
corresponding to $\Delta$.

Let $\mathbb E\to M$ be an arbitrary Courant algebroid. A
\emph{generalised connection} on $\mathbb E$ is a linear connection
$\nabla\colon\Gamma(\mathbb E)\times\Gamma(\mathbb E)\to\Gamma(\mathbb
E)$, which is compatible with the pairing in $\E$ (\cite{Gualtieri10}).  For 
instance, if $\nabla\colon\mx(M)\times\Gamma(\mathbb E)\to\Gamma(\mathbb E)$ 
is an ordinary metric linear connection, then
$\nabla^\rho\colon\Gamma(\mathbb E)\times\Gamma(\mathbb
E)\to\Gamma(\mathbb E)$ defined by
$\nabla^\rho_ee'=\nabla_{\rho(e)}e'$ for $e,e'\in\Gamma(\mathbb E)$,
is a generalised connection. 

Let $\JJ\colon\mathbb E\to\mathbb E$ be a generalised almost complex
structure.  The paper \cite{CoDa19} shows that there exists a metric 
linear connection
$\nabla\colon\mx(M)\times\Gamma(\mathbb E)\to\Gamma(\mathbb E)$ that
is \emph{adapted} to $\JJ$:
\begin{equation}\label{algebraic_adapted1}
  \nabla_\cdot \JJ =0.
\end{equation}
The pullback $\nabla^\rho$ is then a generalised connection
\emph{adapted} to $\JJ$ and its intrinsic torsion relative to the
connection is studied in \cite{CoDa19} in relation with the
integrability of $\JJ$ -- generalising the fact that an almost complex
structure $J\colon TM\to TM$ on a smooth manifold $M$ is integrable if
and only if there exists a complex-linear torsion-free connection
$\nabla\colon\mx(M)\times\mx(M)\to\mx(M)$.  The condition
\eqref{algebraic_adapted1} is equivalent to the generalised complex
structure $T\mathcal J\colon T\mathbb E\to T\mathbb E$ over $TM$
preserving the horizontal space $H_\nabla\subseteq T\mathbb E$ defined
by $\nabla$:
\begin{equation}\label{geometric_adapted2} 
  T\mathcal J(H_\nabla)=H_\nabla.
\end{equation}

The notion of adapted generalised connection in \cite{CoDa19} seems in
general different from the notion of adapted Dorfman connection in
Proposition \ref{prop_adapted_DMC}. However, as the similarity of
\eqref{geometric_adapted1} with \eqref{geometric_adapted2} suggests,
they are equivalent at least in a special situation, which is
explained in the remainder of this section.

\medskip Let $\mathbb E\to M$ be a Courant algebroid and denote the co-anchor 
of $\E$ by $\rho^*$, which is defined by composing $\rho^t$ with the isomorphism 
between $\E$ and $\E^*$ induced by the pairing in $\E$. Consider a
generalised connection
$\nabla\colon\Gamma(\E)\times\Gamma(\mathbb E)\to\Gamma(\mathbb E)$ such that
$\nabla_{\rho^* \theta}=0$ for all $\theta\in \Omega^1(M)$. It is
easy to see that
  \begin{equation}\label{eq_gconn_dconn}
    \Delta_{e_1}e_2=\llb e_1,e_2\rrb+\nabla_{e_2}e_1
  \end{equation}
  for all $e_1,e_2\in\Gamma(\mathbb E)$, defines a Dorfman connection
  $\Delta\colon\Gamma(\mathbb E)\times\Gamma(\mathbb
  E)\to\Gamma(\mathbb E)$, see also \cite{Jotz20}.  Conversely a Dorfman
  $\mathbb E$-connection on $\mathbb E$ defines a generalised connection 
  by \eqref{eq_gconn_dconn}, that must satisfy
  \begin{equation}\label{condition_eq_dorfman_linear}
    \nabla_{\rho^*\theta}e=-\llb
    e,\rho^*\theta\rrb+\Delta_e\rho^*\theta=\rho^*(-\ldr{\rho(e)}\theta+\ldr{\rho(e)}\theta)=0
  \end{equation}
  for all $e\in\Gamma(\mathbb E)$ and all $\theta\in
  \Omega^1(M)$. Hence Dorfman $\mathbb E$-connections on $\mathbb E$
  are equivalent with linear $\mathbb E$-connections on $\mathbb E$
  satisfying \eqref{condition_eq_dorfman_linear}.  In particular,
  since $\rho\circ \rho^*=0$ (see \cite{Roytenberg02}), the pullbacks
  of $TM$-connections are equivalent to a class of Dorfman
  $\mathbb E$-connections on $\mathbb E$. A metric $TM$-connection on
  $\mathbb E$ is equivalent to a Lagrangian splitting of the tangent prolongation
  of $\mathbb E$, which is a VB-Courant algebroid. The induced dull
  bracket on $\Gamma(\mathbb E)$ is the degree $1$ part of the splitting of the
  corresponding Lie $2$-algebroid, see \cite{Jotz19b}.

\medskip

Equations \eqref{geometric_adapted1} and \eqref{geometric_adapted2}
can in fact be related in the case of the standard Courant algebroid
$TM\oplus T^*M$ over a smooth manifold $M$.
 A computation shows that the canonical isomorphim

 \begin{equation}\label{can_iso_courant} 
     	{\footnotesize	
 	\begin{tikzcd}
                  T(TM\oplus T^*M) \ar[rr,"\mathcal I"]\ar[rd]\ar[dd] & & T(TM)\oplus T^*(TM) \ar[dd]\ar[rd] & \\
                  & TM  \ar[rr,"\id", near start, crossing over] & & TM \ar[dd] \\
                  TM\oplus T^*M \ar[rr,"\id", near start]\ar[rd] & & TM\oplus T^*M\ar[rd] & \\
                  & M\ar[rr,"\id"] \ar[uu, crossing over, leftarrow] &
                  & M
		\end{tikzcd}}
              \end{equation}
        arising from the canonical involution $I\colon TTM\to TTM$ 
        (see e.g.~\cite{JoStXu16} and references therein) sends
        $H_{\nabla}\subseteq T(TM\oplus T^*M)$ to
        $L_\Delta\subseteq T(TM)\oplus T^*(TM)$, if and only if $\Delta$ and
        $\nabla^\rho$ are related by \eqref{eq_gconn_dconn}.

Consider an almost
complex structure $J\colon TM\to TM$, as well as a torsion-free linear connection
$\nabla\colon\mx(M)\times\mx(M)\to\mx(M)$.  Consider the $TM$-connection
$\tilde\nabla\colon \mx(M)\times\Gamma(TM\oplus
T^*M)\to\Gamma(TM\oplus T^*M)$,
$\tilde\nabla_X(Y,\theta)=(\nabla_XY,\nabla^*_X\theta)$. Let
$\JJ_J$ be the generalised almost complex structure defined as in Example
\ref{ex_gcs_cplx}. The generalised connection $\tilde\nabla^\rho$ satisfies
\[\tilde\nabla^\rho_\cdot\JJ_J=0, \text{ or in other words } T\JJ_J(H_{\tilde\nabla})=H_{\tilde\nabla},
\]
if and only if  
  $\nabla_\cdot J=0$, i.e.~if and only  if $TJ(H_\nabla)=H_\nabla$.

  Here, an easy computation shows that $\Delta$ and
  $\tilde \nabla^\rho$ are related by \eqref{eq_gconn_dconn} if and
  only if $\Delta$ is the standard Dorfman connection defined by
  $\nabla$ as in Example \ref{def_std_DMC}. As a consequence
  \[\mathcal I(H_{\tilde\nabla})=L_\Delta.
  \]
  The canonical isomorphism
  $\mathcal I$ also transforms $T\JJ_J$ into the linear generalised almost 
  complex structure $\JJ_{J_T}$, where $J_T$ is the almost complex
  structure $I\circ TJ\circ I$ on the vector bundle $TM$ seen as a
  manifold. Then $\tilde\nabla_\cdot\JJ_J=0$ if and only if
        \[ \JJ_{J_T}(L_{\Delta})=L_{\Delta},
        \]
        where
        $\Delta\colon\Gamma(TM\oplus T^*M)\times\Gamma(TM\oplus
        T^*M)\to\Gamma(TM\oplus T^*M)$ is the standard Dorfman
        connection defined by $\nabla$ as in Example
        \ref{def_std_DMC}.  That is,
        \[ T\JJ_J(H_{\tilde\nabla})=H_{\tilde\nabla} \qquad \text{ if
            and only if } \qquad
          \JJ_{J_T}(L_{\Delta})=L_{\Delta}.
          \]
          In other words, $\tilde\nabla$ is adapted to $\JJ_J$ in the
          sense of \cite{CoDa19} if and only if $\Delta$ is
          adapted to $\JJ_{I\circ TJ\circ I}$ in the sense of
          Proposition \ref{prop_adapted_DMC}.

          More generally, let
          $\mathcal J\colon TM\oplus T^*M\to TM\oplus T^*M$ be a
          generalised almost complex structure and let
          $\nabla\colon\Gamma(TM)\times\Gamma(TM\oplus T^*M)\to
          \Gamma(TM\oplus T^*M)$ be a linear connection adapted to
          $\mathcal J$. As before
          $T\mathcal J\colon T(TM\oplus T^*M)\to T(TM\oplus T^*M)$ is
          a linear generalised almost complex structure over the
          identity on the base $TM$, and
          $\mathcal J\colon TM\oplus T^*M\to TM\oplus T^*M$ on the
          side. The isomorphism $\mathcal I$ in
          \eqref{can_iso_courant} tranforms $T\mathcal J$ into a
          linear generalised almost complex structure
          $\mathcal J_{TM}\colon T(TM)\oplus T^*(TM)\to T(TM)\oplus
          T^*(TM)$ in the standard VB-Courant algebroid over $TM$.
          Then since $\nabla$ is adapted to $\mathcal J$:
          \begin{equation*}
            T\JJ(H_{\nabla})=H_{\nabla},
          \end{equation*}
          which is again equivalent to
          \[ \JJ_{TM}(L_\Delta)=L_\Delta,\] where $\nabla^\rho$ and
          $\Delta$ are equivalent via \eqref{eq_gconn_dconn}.

\end{document}